\def\norm#1{\|#1\|}
\newcommand{\OM}{\Omega}
\newcommand{\RE}{\mathbb{R}}
\newcommand{\PO}{\mathbb{P}}
\newcommand{\PDif}[2]{\frac{\partial {#1}}{\partial {#2}}}
\newcommand{\Lsp}{\textrm{L}}
\newcommand{\Hsp}{\textrm{H}}
\newcommand{\Tsp}{\textrm{T}}
\newcommand{\Arr}[2]{
	\left(
		\begin{array}{c}
		{#1} 
		\\
		{#2}
		\end{array}
	\right)
}
\newcommand{\Arrtri}[3]{
	\left(
		\begin{array}{c}
		{#1} 
		\\
		{#2}
		\\
		{#3}		
		\end{array}
	\right)
}
\newcommand{\MATT}[4]{
	\left[
		\begin{array}{cc}
		{#1} 
		&
		{#2}
		\\
		{#3} 
		&
		{#4}		
		\end{array}
	\right]
}
\newcommand{\MATnine}[9]{
	\left[
		\begin{array}{ccc}
		{#1} & {#2} & {#3}
		\\
		{#4} & {#5} & {#6}
		\\
		{#7} & {#8} & {#9}		
		\end{array}
	\right]
}
\newcommand{\tA}{\tilde{A}}
\newcommand{\tB}{\tilde{B}}
\newcommand{\half}{\frac{1}{2}}
\newcommand{\average}[1]{\ensuremath{\lbrace\!\!\lbrace#1\rbrace\!\!\rbrace} } 
\newcommand{\jump}[1]{\ensuremath{[\![#1]\!]} }
\newcommand{\Th}{\mathcal{T}_h} 
\newcommand{\Ti}[1]{\mathcal{T}_{#1}} 
\newcommand{\SIG}{{\boldsymbol \sigma} }
\newcommand{\NOR}{{\boldsymbol n} }
\newcommand{\fB}{{\boldsymbol f} }
\newcommand{\vB}{{\boldsymbol v} }
\newcommand{\uB}{{\boldsymbol u} }
\newcommand{\wB}{{\boldsymbol w} }
\newcommand{\BO}[1]{{\boldsymbol #1} }
\newcommand{\lB}{{\boldsymbol \lambda} }
\newcommand{\phiB}{{\boldsymbol \varphi} }
\newcommand{\Bopt}{\mathcal{B}}
\newcommand{\Hopt}{\mathcal{H}}
\newcommand{\Zopt}{{\boldsymbol \theta}}
\newcommand{\EPS}{ \mathcal{E} }
\newcommand{\GAM}{ \Gamma }
\newcommand{\dotV}[2]{\left( {#1} , {#2} \right) }
\newcommand{\dotS}[2]{\left\langle #1 , #2 \right\rangle }
\newcommand{\DGnorm}[2]{\Vert {#1} \Vert_{\textrm{\tt DG}{#2}} }
\newcommand{\Bnorm}[2]{\Vert {#1} \Vert_{{\tt HDG} #2} }
\newcommand{\Lnorm}[1]{{ \Vert #1 \Vert }}
\newcommand{\frL}{\boldsymbol \ell}
\newtheorem{algorithm}{Algorithm}
\newtheorem{remark}{\it Remark}
\title{Analysis of Schwarz methods for a hybridizable discontinuous Galerkin discretization}
\author{Martin J.~Gander\and Soheil Hajian}
\begin{document}
\maketitle
\begin{abstract}
Schwarz methods are attractive parallel solvers for large scale linear
systems obtained when partial differential equations are
discretized. For hybridizable discontinuous Galerkin (HDG) methods,
this is a relatively new field of research, because HDG methods impose
continuity across elements using a Robin condition, while classical
Schwarz solvers use Dirichlet transmission conditions. Robin
conditions are used in optimized Schwarz methods to get faster
convergence compared to classical Schwarz methods, and this even
without overlap, when the Robin parameter is well chosen. We present
in this paper a rigorous convergence analysis of Schwarz methods for
the concrete case of hybridizable interior penalty (IPH) method. We
show that the penalization parameter needed for convergence of IPH
leads to slow convergence of the classical additive Schwarz method,
and propose a modified solver which leads to much faster
convergence. Our analysis is entirely at the discrete level, and thus
holds for arbitrary interfaces between two subdomains. We then
generalize the method to the case of many subdomains, including cross
points, and obtain a new class of preconditioners for Krylov subspace
methods which exhibit better convergence properties than the classical
additive Schwarz preconditioner.  We illustrate our results with
numerical experiments.
\end{abstract}
\begin{keywords} 
Additive Schwarz, optimized Schwarz, discontinuous Galerkin methods
\end{keywords}
\begin{AMS}
65N22, 65F10, 65F08, 65N55, 65H10
\end{AMS}
\pagestyle{myheadings}
\thispagestyle{plain}
\markboth{Martin J.~Gander and Soheil Hajian}{OSM and DG}

\section{Introduction}
We consider the elliptic model problem
\begin{equation}\label{eq:pde}
  \begin{array}{rcll}
    \eta(x) u(x) - \nabla \cdot ( a(x) \nabla u ) &=& 
    f,\quad & \textrm{in $\OM \subset \RE^2$},\\
    u &=& 0, & \textrm{on $\partial \OM$},
  \end{array}
\end{equation}
in the weak sense where $f \in \Lsp^2(\OM)$, $a(x) \in L^\infty(\OM)$
and uniformly positive, $\eta_0 \geq \eta(x) \geq 0$ and $\OM$ is
assumed to be a convex polygon for simplicity.  Any discretization of
this problem, for example by a finite element method (FEM) or a
discontinuous Galerkin (DG) method, leads to a large sparse linear
system
\begin{equation}
\label{eq:linsys}
	A \uB = \fB,
\end{equation}
where $\uB$ is the vector of degrees of freedom representing an
approximation of $u$ and $A$ represents the disretized differential
operator. In this paper we consider a hybridizable interior penalty
(IPH\footnote{We use the acronym IPH for {\it hybridizable interior
    penalty} because this has become the common abbreviation following
  its introduction in \cite{cockburn} as a member of the family of HDG
  methods.}) discretization which results in a symmetric positive
definite (s.p.d.)~matrix $A$.  An IPH discretization seeks $u_h \in
\Lsp^2(\OM)$ over a triangulation of the domain where $u_h$ is not
necessarily continuous across elements. As common to DG methods, IPH
imposes the continuity of the solution approximately through
penalization techniques, i.e.~penalizing jumps of $u_h$ across
elements in the bilinear form. The penalization is controlled by a
penalty parameter $\mu$.

Since the matrix $A$ of IPH is s.p.d.~and sparse, one can use the
Conjugate Gradient (CG) method to solve the linear system
(\ref{eq:linsys}). The convergence of CG slows down as the condition
number $\kappa(A)$ grows. It is not hard to show that $\kappa(A) =
O(h^{-2})$, where $h$ is the maximum diameter of the elements in the
triangulation, see for instance \cite{castillo}. Therefore
preconditioning is unavoidable and domain decomposition (DD)
preconditioners have been developed and studied for such
discretizations, see \cite{paola2011, karakashian}.  IPH as local
solvers were also used to precondition classical IP discretizations
\cite{blanca2}. One can also design a substructuring preconditioner
for a $p$-version of IPH with poly-logarithmic growth in the condition
number, see for details \cite{joachim1}. For a similar discretization
where the approximation is continuous inside subdomains but
discontinuous across subdomains, a substructuring preconditioner was
proposed and analyzed for the $h$-version with logarithmic growth in
the condition number, see \cite{dryja}.

A favorite preconditioner is the additive Schwarz preconditioner, for which
the set of unknowns is partitioned into overlapping or non-overlapping
subsets, corresponding to subdomains with maximum diameter $H$. In
this paper we only consider the non-overlapping case\footnote{There is
  a subtle difference between overlap at the continuous level of the
  subdomains, and the discrete level of unknowns, see
  \cite{gander2008schwarz}: no overlap at the level of unknowns means
  minimal overlap of one mesh size at the continuous level for
  classical discretizations like finite elements or finite
  differences. This becomes however even more subtle here with DG
  discretizations, since the discrete unknowns are coupled through
  Robin conditions, and no overlap at the level of unknowns really
  means no overlap at the continuous level, see
  \cite{hajian2013block}.} and for simplicity study first only two
subdomains, a generalization is given in Section \ref{sec:multi}.  The
non-overlapping two subdomain decomposition results in a natural
partitioning of the unknowns $\uB = (\uB_1, \uB_2)^\top$. The solution
of the linear system by the additive Schwarz method without overlap is
equivalent to the block Jacobi iteration
\begin{equation}
\label{eq:blockJacobi}
  M \mathbf{u}^{(n+1)} = N \mathbf{u}^{(n)} + \mathbf{f},\quad
   M = \MATT{A_{1}}{}{}{A_{2}},\ N = M - A.
\end{equation}
The matrix $M$ is also s.p.d.~and can be considered as a
preconditioner for CG. It can be shown that in this case we have
$\kappa(M^{-1} A) \leq O(h^{-1})$ in the absence of a coarse solver;
see \cite{karakashian}. Preconditioned CG satisfies then the
convergence factor estimate $\rho \leq \frac{\sqrt{\kappa(M^{-1}
    A)}-1}{\sqrt{\kappa(M^{-1} A)}+1}= 1 - O(\sqrt{h})$. 

On the other hand it has been recently shown in \cite{hajian2013block}
that the block Jacobi iteration in (\ref{eq:blockJacobi}) for an IPH
discretization can be viewed as a discretization of a non-overlapping
Schwarz method with Robin transmission conditions, i.e.~
\begin{equation}\label{eq:DD-DGH}
  \begin{array}{rcllrcll}
     (\eta-\Delta) u_1^{(n+1)} &=& f  & \text{in $\OM_1$},&
     (\eta-\Delta) u_2^{(n+1)} &=& f  & \text{in $\OM_2$},\\
     \Bopt_1 u_1^{(n+1)} &=& \Bopt_1 u_2^{(n)} & \text{on $\GAM$},&
     \Bopt_2 u_2^{(n+1)} &=& \Bopt_2 u_1^{(n)}& \text{on $\GAM$},
  \end{array}
\end{equation}
where $\Bopt_i w = \mu\, w + \PDif{w}{\NOR_i} $, $\GAM$ is the
interface between the two subdomains and $\mu$ is precisely the
penalty parameter of the IPH discretization.  This parameter $\mu$ has
to be chosen such that it ensures coercivity and optimal approximation
properties. For an IPH discretization, we must have
$\mu={\alpha}{h^{-1}}$ for some constant $\alpha>0$ large enough,
independent of $h$, and this scaling cannot be weakened, since
otherwise coercivity is lost. On the other hand, optimized Schwarz
theory suggests that the iteration in (\ref{eq:DD-DGH}) converges
faster if $\mu=O({h^{-1/2}})$, see \cite{ganderos}. In that case for
the contraction factor we have $\rho = 1 - O(\sqrt{h})$ while with the
choice $\mu = O(h^{-1})$ for IPH, we have $\rho = 1 - O(h)$.

The challenge is therefore to design a Schwarz algorithm for IPH 
with convergence factor  $\rho = 1 - O(\sqrt{h})$, while having the same
fixed point as the original additive Schwarz or block Jacobi method
for IPH. An idea for doing this can be found for Maxwell's equation in
\cite{dolean}. This approach was also adopted for IPH in
\cite{hajian2014}, where numerical experiments show that the
convergence factor is indeed $\rho = 1 - O(\sqrt{h})$, while
maintaining the same fixed point, but there is no convergence
analysis.

We provide in this paper a convergence theory for Schwarz methods
applied to IPH discretizations and prove these numerical
observations. A similar analysis exists for classical FEM using Schur
complement formulations and exploiting eigenvalues of the
Dirichlet-to-Neumann (DtN) operator, see \cite{lui}.  Our analysis
uses similar DtN arguments, but is substantially different from
\cite{lui}, since in a DG method continuity conditions are imposed
only weakly. We focus in our analysis on the $h$-version with
polynomial degree one, and do not study the effect of possible jumps
in $a(x)$ or higher polynomial degree.

Our paper is organized as follows: in Section \ref{sec:iph} we
describe two different but equivalent formulations of IPH, and
construct a Schur complement system. In Section \ref{sec:tech} we
provide mathematical tools to analyze Schwarz methods formulated using
Schur complements. In Section \ref{sec:schwarz} we present the
additive Schwarz and a new Schwarz algorithm for IPH in a two
subdomain setting and prove their convergence with concrete
contraction factor estimates. Section \ref{sec:multi} contains a
generalization of the algorithms to the multi-subdomain case. We show
in Section \ref{sec:num} numerical experiments to illustrate
our analysis, and also verify numerically that the new algorithm
provides a better preconditioner for Krylov subspace methods: we
observe that the contraction factor is $\rho = 1 - O(h^{1/4})$ which
is much faster than the CG solver preconditioned by one level additive
Schwarz.

\section{Hybridizable Interior Penalty method} \label{sec:iph}

This section is devoted to recall the definition of IPH in two
different but equivalent forms, namely the primal and hybridizable
formulation.  We later in Section \ref{sec:schwarz} design and analyze
two Schwarz methods for the hybridizable form and show that the first
one is slow and equivalent to a block Jacobi method applied to a
primal form, i.e.~(\ref{eq:blockJacobi}). However the second Schwarz
method takes advantage of hybridizable formulation and achieve faster
convergence.

IPH was first introduced in \cite{ewing} as a stabilized discontinuous
finite element method and later was studied as a member of the class
of hybridizable DG methods in \cite{cockburn}. It has been shown that
it is equivalent to a method called Ultra Weak Variational Formulation
(UWVF) for the Helmholtz equation; see \cite{MZA:8194617}. IPH also
fits into the framework developed in \cite{dgunified} for a unified
analysis of DG methods.  IPH is further studied in
\cite{lehrenfeld2010hybrid} in the context of incompressible flows.

\subsection{Notation} \label{sec:notation}
We follow the notation introduced in \cite{dgunified}.  Let
$\Th=\{K\}$ be a shape-regular and quasi-uniform triangulation of the
domain $\OM$.  Let $h_K$ be the diameter of an element of the
triangulation defined by $h_K := \max_{x,y \in K}|x-y|$ and $h =
\max_{K \in \Th} h_K$.  If $e$ is an edge of an element, we denote by
$h_e$ the length of that edge. The quasi-uniformity of the mesh
implies $h \approx h_K \approx h_e$.

We denote by $\EPS^0$ the set of interior edges shared by two elements
in $\Th$, that is
\begin{equation*}
\EPS^0 := \left\lbrace e = \partial K_1 \cap \partial
K_2, \forall K_1, K_2 \in \Th \right\rbrace,
\end{equation*}
by $\EPS^\partial $ the set of boundary edges, and all edges by $\EPS
:= \EPS^\partial \cup \EPS^0$.
We introduce the broken Sobolev space $\Hsp^l (\Th) := \prod_{K \in
  \Th} \Hsp^l(K)$
where $\Hsp^l(K)$ is the Sobolev space in $K \in \Th$ and $l$ is a
positive integer.  Note that $q \in \Hsp^l(\Th)$ is not necessarily
continuous across elements. Therefore the element boundary traces of
functions in $\Hsp^l (\Th)$ belong to $ \Tsp(\EPS) = \prod_{K \in \Th}
\Lsp^2( \partial K ) $, where $q \in \Tsp(\EPS)$ can be double-valued
on $\EPS^0$, but is single-valued on $\EPS^\partial$.

We now define two trace operators: let $q \in \Tsp(\EPS)$ and $q_i :=
\left. q \right|_{\partial K_i}$. Then on $e = \partial K_1 \cap
\partial K_2$ we define the average and jump operators
\begin{equation*}
  \begin{array}{lrlr}
    \average{q} := \frac{1}{2} ( q_1 + q_2 ), 
    & 
    &
    \jump{q} := q_1 \, \NOR_1 + q_2 \, \NOR_2,
    & 
  \end{array}
\end{equation*}
where $\NOR_i$ is the unit outward normal from $K_i$ on $e \in \EPS^0$.  It
is clear that these operators are independent of the element
enumeration.  Similarly for a vector-valued function $\SIG \in \left[
  \Tsp(\EPS) \right]^2 $ we define on interior edges
\begin{equation*}
  \begin{array}{lrlr}
    \average{\SIG} := \frac{1}{2} ( \SIG_1 + \SIG_2 ), 
    &
    &
    \jump{\SIG} := \SIG_1 \cdot \NOR_1 + \SIG_2 \cdot \NOR_2.
    &
  \end{array}
\end{equation*}
On the boundary, we set the average and jump operators to
$\average{\SIG} := \SIG$ and $\jump{q} = q \, \NOR$.
We do not need to define $\average{q}$ and $\jump{\SIG}$ on $e \in
\EPS^\partial$.

We define a finite dimensional subspace of $\Hsp^l(\Th)$ by
\begin{equation}
  V_h := 
  \left\lbrace v \in \Lsp^2(\OM) :
  \left. v \right|_{K} \in \PO^k(K), \forall K \in \Th \right\rbrace,
\end{equation}
where $\PO^k(K)$ is the space of polynomials of degree $\leq k$ in the
simplex $K \in \Th$.  We denote boundary integrals on an edge $e \in
\EPS$ by
\begin{equation*}
  \dotS{a}{b}_{e} := \int_{e} a \, b  \quad \textrm{if } a,b \in \Tsp(e),
  \quad
  \dotS{\BO{a}}{\BO{b}}_{e} := \int_{e} \BO{a} \cdot \BO{b} \quad
  \textrm{if } \BO{a},\BO{b} \in [\Tsp(e)]^2,
\end{equation*}
and similarly for volume terms on an element $K \in \Th$
\begin{equation*}
      \dotV{a}{b}_{K} := \int_{K} a \, b \quad \textrm{if } a,b \in
      \Hsp^l(K), \quad
      \dotV{\BO{a}}{\BO{b}}_{K} := \int_{K} \BO{a}
      \cdot \BO{b} \quad \textrm{if } \BO{a},\BO{b} \in [\Hsp^l(K)]^2.
\end{equation*}
If $\GAM$ is a subset of $\EPS$, we denote the $\Lsp^2$-norm of $q \in
\Tsp(\EPS)$ along $\GAM$ by $ \norm{q}_{\GAM}^{2} := \sum_{e \in \GAM}
\norm{q}^2_{e} $ and $\norm{q}^2_e := \dotS{q}{q}_{e}$.  Similarly if
$\Ti{i}$ is a subset of $\Th$, we denote the $\Lsp^2$-norm of a $v \in
\Hsp^l(\Ti{i})$ by $ \norm{v}_{\Ti{i}}^{2} := \sum_{K \in \Ti{i}}
\norm{v}_{K}^2 $.  

For $v \in \Hsp^1(\Th)$ we define functions whose restrictions to each
element, $K \in \Th$, are equal to the gradient of $v$. This operator
in the literature is called piecewise gradient and is usually denoted
by $\nabla_h$. For the sake of simplicity we use $\nabla v$ instead of
$\nabla_h v$.

\subsection{Primal formulation} 
To simplify our presentation, we set $\eta \geq 0 $ to be a constant
and $a(x)=1$ in the model problem (\ref{eq:pde}).  Let $u,v \in
\Hsp^{2}(\Th)$, then the IPH bilinear form of the model problem
(\ref{eq:pde}) is defined as
\begin{equation}
  \begin{array}{rcl}
    a(u,v) &:=& \eta \dotV{u}{v}_{\Th} + \dotV{\nabla u}{\nabla v}_{\Th}
    - \dotS{\average{\nabla u}}{\jump{v}}_{\EPS}
    - \dotS{\average{\nabla v}}{\jump{u}}_{\EPS}
    \\
    && + \dotS{\frac{\mu}{2} \jump{u}}{\jump{v}}_{\EPS} - 
    \dotS{\frac{1}{2\mu} \jump{\nabla u}}{\jump{\nabla v}}_{\EPS^0},
  \end{array}
\end{equation}
where $\mu \in \Tsp(\EPS)$, $\left. \mu \right|_{e} =
{\alpha}{h_e^{-1}}$ and $\alpha>0$.  Observe that $a(\cdot,\cdot)$ is
symmetric.  The definition of the IPH bilinear form is different from
the classical Interior Penalty (IP) method only in the last term,
i.e.~the last term in $a(\cdot,\cdot)$ is not present in IP.

There are two natural energy norms which are equivalent at the
discrete level.  Let $u \in V(h) := V_h + \Hsp^2(\OM) \cap
\Hsp^1_0(\OM) \subset \Hsp^2(\Th)$ then
\begin{equation}
  \begin{array}{lcl}
    \DGnorm{u}{}^{2} &:=& \eta \norm{u}_{\Th}^{2} +
    \norm{\nabla u}_{\Th}^{2} + 
    \sum_{e \in \EPS} \mu_e \norm{ \jump{u} }_{e}^{2},
    \\
    \DGnorm{u}{,\ast}^{2} &:=& 
    \DGnorm{u}{}^{2} + \sum_{K \in \Th} h_K^2 |u|_{K,2}^2.
  \end{array}
\end{equation}
One can show that they are equivalent at the discrete level by a local
application of the inverse inequality (\ref{eq:invineq}).
\begin{proposition}
Let $u \in V_h$. Then we have
\begin{equation*}
  \DGnorm{u}{}^{2} \leq \DGnorm{u}{,\ast}^{2} \leq {C}^2 \DGnorm{u}{}^{2},
\end{equation*}
where ${C}^2 > 1 $ and independent of $h$ and $\alpha$.
\end{proposition}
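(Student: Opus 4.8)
The plan is to establish the two inequalities separately. The first inequality, $\DGnorm{u}{}^2 \le \DGnorm{u}{,\ast}^2$, is immediate from the definition, since $\DGnorm{u}{,\ast}^2 = \DGnorm{u}{}^2 + \sum_{K \in \Th} h_K^2 |u|_{K,2}^2$ and each term $h_K^2 |u|_{K,2}^2$ is nonnegative. So the content is entirely in the second inequality, $\DGnorm{u}{,\ast}^2 \le C^2 \DGnorm{u}{}^2$, which amounts to bounding the extra term $\sum_{K \in \Th} h_K^2 |u|_{K,2}^2$ by a constant multiple of $\norm{\nabla u}_{\Th}^2$ (which already appears in $\DGnorm{u}{}^2$).

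The key step is a local inverse inequality: for $u \in V_h$, i.e.\ $u|_K \in \PO^k(K)$, the second-order seminorm is controlled by the first-order one on each element, $h_K |u|_{K,2} \le C_{\mathrm{inv}} |u|_{K,1} = C_{\mathrm{inv}} \norm{\nabla u}_K$, where $C_{\mathrm{inv}}$ depends only on the polynomial degree $k$ and the shape-regularity constant of the mesh (this is the inverse inequality referred to as~(\ref{eq:invineq}) in the text, applied to $\nabla u$ componentwise on the reference element and scaled back). Squaring and summing over $K \in \Th$ gives $\sum_{K} h_K^2 |u|_{K,2}^2 \le C_{\mathrm{inv}}^2 \sum_K \norm{\nabla u}_K^2 = C_{\mathrm{inv}}^2 \norm{\nabla u}_{\Th}^2 \le C_{\mathrm{inv}}^2 \DGnorm{u}{}^2$. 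Adding $\DGnorm{u}{}^2$ to both sides yields $\DGnorm{u}{,\ast}^2 \le (1 + C_{\mathrm{inv}}^2) \DGnorm{u}{}^2$, so the claim holds with $C^2 = 1 + C_{\mathrm{inv}}^2 > 1$. Since $C_{\mathrm{inv}}$ depends only on $k$ and the shape-regularity of $\Th$ and not on the meshsize $h$ nor on the penalty parameter $\alpha$ (note $\alpha$ enters only through the jump terms $\mu_e \norm{\jump{u}}_e^2$, which are common to both norms and untouched by the argument), the constant $C$ is independent of $h$ and $\alpha$, as required.

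There is no real obstacle here; the only point requiring a little care is the precise form and scaling of the inverse inequality. On an element $K$ of diameter $h_K$, mapping to a reference simplex $\hat K$ via an affine map with Jacobian scaling like $h_K$, one has $|v|_{K,2} \lesssim h_K^{-1} |v|_{K,1}$ for polynomials $v$ of bounded degree, uniformly over shape-regular meshes; applying this with $v = u|_K$ gives exactly what is needed. One should just make sure the inverse inequality is invoked in the ``second derivatives controlled by first derivatives'' form rather than the more familiar ``first derivatives controlled by the function'' form — both follow from the same finite-dimensional norm-equivalence argument on $\hat K$ combined with affine scaling.
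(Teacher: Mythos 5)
Your proof is correct and follows exactly the route the paper indicates: the first inequality is trivial from the definition, and the second is the local inverse inequality applied to $\nabla u$ elementwise, giving $h_K^2|u|_{K,2}^2\leq C_{\mathrm{inv}}^2\norm{\nabla u}_K^2$ and hence $C^2=1+C_{\mathrm{inv}}^2$, with the correct observation that $\alpha$ only enters through the jump terms common to both norms. Nothing to add.
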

%
%

The norm $\DGnorm{\cdot}{,\ast}$ provides a natural norm for
boundedness and $\DGnorm{\cdot}{}$ can be used for showing
coercivity. The main ingredients for coercivity are the following
inequalities which hold for all $u \in V_h$:
\begin{equation} 
  \label{eq:coerc1} 
  \begin{array}{rcl}
    2 \dotS{\average{\nabla u} }{ \jump{u} }_{\EPS} &\leq& 
    \frac{1}{2} \norm{ \nabla u }^{2}_{\Th}	 
    + 
    \sum_{e \in \EPS } \frac{C_1}{h_e} \norm{ \jump{u} }_{e}^{2},
    \\
    \dotS{\frac{1}{2\mu} \jump{\nabla u}}{\jump{\nabla u}}_{\EPS^0} &\leq&
    \frac{C_2}{\alpha} \norm{ \nabla u }^2_{\Th},
  \end{array}
\end{equation}
where $C_1$ and $C_2$ are both independent of $h$ and $\alpha$ but
depend on the polynomial degree.  This can be obtained from the trace
inequality
\begin{equation} 
  \label{eq:warburton}
  \norm{w}_{\partial K}^2 \leq c \frac{k^2}{h} \norm{w}_K^2, \quad
  \forall w \in \PO^k(K),
\end{equation}
where $k$ is the polynomial degree, for details see
\cite{hesthaven, dgunified}.
\begin{proposition}
If $\mu = {\alpha}{h^{-1}}$, for $\alpha>0$ and sufficiently large,
then we have
\begin{equation*}
  \begin{array}{rcccll}
    && a(u,v) &\leq& 
    \overline{C} \DGnorm{u}{,\ast} \DGnorm{v}{,\ast} & \forall u,v \in V(h),
    \\
    \underline{c} \, C^{-2} \DGnorm{u}{,\ast}^2 \leq
    \underline{c} \DGnorm{u}{}^2 & \leq & a(u,u) & & & \forall u \in V_h,
  \end{array}
\end{equation*}
where $\underline{c} = \min\{ \frac{1}{2} - \frac{C_2}{\alpha} , 1 -
\frac{C_1}{\alpha} \} < 1$ , $\overline{C} = 1 + \frac{C_3}{\alpha} >
1$ and both constants are independent of $h$.
\end{proposition}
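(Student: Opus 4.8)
The plan is to establish boundedness and coercivity separately, working term by term in the bilinear form $a(\cdot,\cdot)$ and using the norm equivalence of the previous proposition together with the two inequalities in \eqref{eq:coerc1}.

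For \textbf{boundedness}, I would start from the definition of $a(u,v)$ and bound each of the six terms by $\DGnorm{u}{,\ast}\DGnorm{v}{,\ast}$. The two volume terms $\eta\dotV{u}{v}_{\Th}$ and $\dotV{\nabla u}{\nabla v}_{\Th}$ are handled directly by Cauchy--Schwarz, since $\eta\norm{u}_{\Th}^2$ and $\norm{\nabla u}_{\Th}^2$ are both pieces of $\DGnorm{u}{}^2 \le \DGnorm{u}{,\ast}^2$. For the consistency terms $\dotS{\average{\nabla u}}{\jump{v}}_{\EPS}$, I would use Cauchy--Schwarz on each edge, then absorb $\norm{\average{\nabla u}}_e^2$ using the trace inequality \eqref{eq:warburton} applied to $\nabla u \in \PO^{k-1}(K)$ restricted to the adjacent elements: this produces $h_e^{-1}\norm{\nabla u}_{K}^2$ which, paired with the mesh scaling, yields $\sum_K h_K^2|u|_{K,2}^2$-type control, precisely the extra term in $\DGnorm{\cdot}{,\ast}$; the factor $\mu_e\norm{\jump{v}}_e^2$ term in $\DGnorm{v}{}^2$ covers the other half after using $\mu_e = \alpha h_e^{-1}$. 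The penalty term $\dotS{\frac{\mu}{2}\jump{u}}{\jump{v}}_{\EPS}$ is immediate from Cauchy--Schwarz since $\mu_e\norm{\jump{u}}_e^2$ is part of $\DGnorm{u}{}^2$. Finally the new hybridization term $\dotS{\frac{1}{2\mu}\jump{\nabla u}}{\jump{\nabla v}}_{\EPS^0}$ is controlled by the second line of \eqref{eq:coerc1}, which shows it is bounded by $\frac{C_2}{\alpha}\norm{\nabla u}_{\Th}\norm{\nabla v}_{\Th}$. Collecting all contributions gives the constant $\overline{C} = 1 + C_3/\alpha$ with $C_3$ depending only on $C_1, C_2$ and the polynomial degree, hence independent of $h$.

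For \textbf{coercivity}, I would set $v = u$ and write
\begin{equation*}
  a(u,u) = \eta\norm{u}_{\Th}^2 + \norm{\nabla u}_{\Th}^2
  - 2\dotS{\average{\nabla u}}{\jump{u}}_{\EPS}
  + \dotS{\frac{\mu}{2}\jump{u}}{\jump{u}}_{\EPS}
  - \dotS{\frac{1}{2\mu}\jump{\nabla u}}{\jump{\nabla u}}_{\EPS^0}.
\end{equation*}
Applying the first inequality of \eqref{eq:coerc1} to the third term and the second inequality to the last term, and recalling $\mu_e = \alpha h_e^{-1}$ so that $\frac{\mu_e}{2}\norm{\jump{u}}_e^2 - \frac{C_1}{h_e}\norm{\jump{u}}_e^2 = \frac{1}{h_e}\bigl(\frac{\alpha}{2} - C_1\bigr)\norm{\jump{u}}_e^2$, I obtain
\begin{equation*}
  a(u,u) \ge \eta\norm{u}_{\Th}^2
  + \Bigl(1 - \tfrac{1}{2} - \tfrac{C_2}{\alpha}\Bigr)\norm{\nabla u}_{\Th}^2
  + \sum_{e\in\EPS}\frac{1}{h_e}\Bigl(\tfrac{\alpha}{2} - C_1\Bigr)\norm{\jump{u}}_e^2.
\end{equation*}
Choosing $\alpha$ large enough that $\tfrac12 - C_2/\alpha > 0$ and $\alpha/2 - C_1 \ge \underline{c}\,\mu_e h_e = \underline{c}\,\alpha$, i.e. comparing each coefficient against $\underline{c}$ times the corresponding coefficient in $\DGnorm{u}{}^2$, yields $a(u,u) \ge \underline{c}\,\DGnorm{u}{}^2$ with $\underline{c} = \min\{\tfrac12 - C_2/\alpha,\, 1 - C_1/\alpha\}$; the final bound $\underline{c}\,C^{-2}\DGnorm{u}{,\ast}^2 \le \underline{c}\,\DGnorm{u}{}^2$ then follows directly from the norm equivalence proposition.

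The \textbf{main obstacle} is the consistency term $\dotS{\average{\nabla u}}{\jump{v}}_{\EPS}$ in the boundedness estimate: one must carefully route the normal-derivative trace $\average{\nabla u}$ through the trace inequality \eqref{eq:warburton} to land in the $\sum_K h_K^2|u|_{K,2}^2$ part of $\DGnorm{\cdot}{,\ast}$ (this is exactly why the $\ast$-norm, rather than the plain DG norm, is needed for boundedness), while splitting the weights so that the remaining jump factor is absorbed by $\mu_e\norm{\jump{v}}_e^2$. Getting the bookkeeping of the powers of $h_e$ and the constants $C_1, C_2, C_3$ consistent across both halves — and verifying that the resulting $\underline{c}$ is strictly less than $1$ and $\overline{C}$ strictly greater than $1$ as claimed — is the delicate part; everything else is routine Cauchy--Schwarz.
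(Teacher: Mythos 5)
Your overall route is the intended one: the paper itself gives no detailed proof, only the ingredients (\ref{eq:coerc1}) and the trace inequality (\ref{eq:warburton}) plus a pointer to the references, and your term-by-term Cauchy--Schwarz for boundedness together with (\ref{eq:coerc1}) for coercivity is exactly that standard argument. Two points, however, need repair.

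First, in the boundedness estimate you propose to control $\dotS{\average{\nabla u}}{\jump{v}}_{\EPS}$ by applying (\ref{eq:warburton}) to $\nabla u$. That inequality is a polynomial inverse-trace estimate and is only valid for $w\in\PO^k(K)$, whereas boundedness is claimed for all $u,v\in V(h)=V_h+\Hsp^2(\OM)\cap\Hsp^1_0(\OM)$, where $\nabla u$ is generally not a polynomial. What you need instead is the scaled trace inequality for $\Hsp^1(K)$ functions, $\norm{w}_{\partial K}^2\leq c\,(h_K^{-1}\norm{w}_K^2+h_K\vert w\vert_{1,K}^2)$ applied to $w=\partial u/\partial\NOR$; this is precisely the step that produces the $\sum_K h_K^2\vert u\vert_{K,2}^2$ contribution and is the reason $\DGnorm{\cdot}{,\ast}$ rather than $\DGnorm{\cdot}{}$ appears in the boundedness statement. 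As written, your argument cannot generate the second-derivative term from (\ref{eq:warburton}) alone (the same caveat applies to the term $\dotS{\frac{1}{2\mu}\jump{\nabla u}}{\jump{\nabla v}}_{\EPS^0}$, since the second line of (\ref{eq:coerc1}) is stated only for $u\in V_h$; for $u\in V(h)$ use $\frac{1}{2\mu}=\frac{h}{2\alpha}$ and the $\Hsp^1$ trace inequality again).

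Second, your coercivity bookkeeping is internally inconsistent with the constant you claim. From your displayed lower bound the jump coefficient is $\frac{1}{h_e}\bigl(\frac{\alpha}{2}-C_1\bigr)=\mu_e\bigl(\frac12-\frac{C_1}{\alpha}\bigr)$, so matching against the weight $\mu_e$ in $\DGnorm{u}{}^2$ gives $\underline{c}=\min\{\frac12-\frac{C_2}{\alpha},\,\frac12-\frac{C_1}{\alpha}\}$, not $\min\{\frac12-\frac{C_2}{\alpha},\,1-\frac{C_1}{\alpha}\}$; indeed your own requirement $\frac{\alpha}{2}-C_1\geq\underline{c}\,\alpha$ is incompatible with $\underline{c}=1-\frac{C_1}{\alpha}$. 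The mismatch is harmless for the purpose of the proposition (both constants are $h$-independent, positive for $\alpha$ large, and the stated form in the paper reflects a different, unexhibited splitting of the consistency term), but you should either carry out a Young splitting that actually yields the claimed pair of constants or state the constant your estimate delivers; as written the final identification is asserted rather than proved.
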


Note that coercivity holds only for $u \in V_h$ and that
$\alpha>0$ has to be big enough to result in a positive
$\underline{c}$. Since $C_1$ and $C_2$ come from the trace inequality,
we can choose $\alpha = O(k^2)$ where $k$ is the degree of the
polynomials in the simplex.  Throughout this
paper we assume that $\alpha$ is chosen big enough to ensure that any
term of type $1 - \frac{c}{\alpha}$ (with $c>0$, independent of $h$
and $\alpha$) is positive.

Having established that $a(\cdot,\cdot)$ is bounded and coercive, we
obtain that the following approximation problem has a unique
solution: find $u_h \in V_h$ such that
\begin{equation} \label{eq:varIPH}
  a(u_h,v) = \dotV{f}{v}_{\Th}, \quad \forall v \in V_h.
\end{equation}
Assuming the exact solution is regular enough, it can
be shown that
\begin{equation*}
  \begin{array}{lcl}
    \DGnorm{u_h - u}{,\ast} &\leq& c\, h^{k} | u |_{k+1,\OM},
    \\
    \norm{u_h - u}_{0} &\leq& c\, h^{k+1} | u |_{k+1,\OM},
  \end{array}
\end{equation*}
i.e.~IPH has optimal approximation order
\cite{dgunified,lehrenfeld2010hybrid}. We emphasize that
without setting $\mu = \alpha h^{-1}$, the coercivity and optimal
approximation properties are lost.

\subsection{Hybridizable formulation}

In this section we exploit the fact that IPH is a hybridizable
method. A method is hybridizable if one can eliminate the degrees of
freedom inside each element to obtain a linear system in terms of a
single-valued function along the edges, say $\lambda_h$.  Not all DG
methods have this property, for example classical IP is not
hybridizable. A unified hybridization procedure for DG methods has
been introduced and studied in \cite{cockburn} where IPH is also
included.

We introduce the general setting by decomposing the domain into
two non-overlapping subdomains $\OM_1$ and $\OM_2$. Denoting the
interface by $\GAM := \overline{\OM}_1 \cap \overline{\OM}_2$, we
assume $\GAM \subset \EPS^0$, i.e.~the cut does not go through any
element of the triangulation. This will result in a natural
partitioning of $\Th$ into $\Ti{1}$ and $\Ti{2}$ which do not overlap
but share $\GAM$ as a boundary; see for an example Figure
\ref{fig:ddmesh}.  
\begin{figure}
  \centering
  \epsfig{file=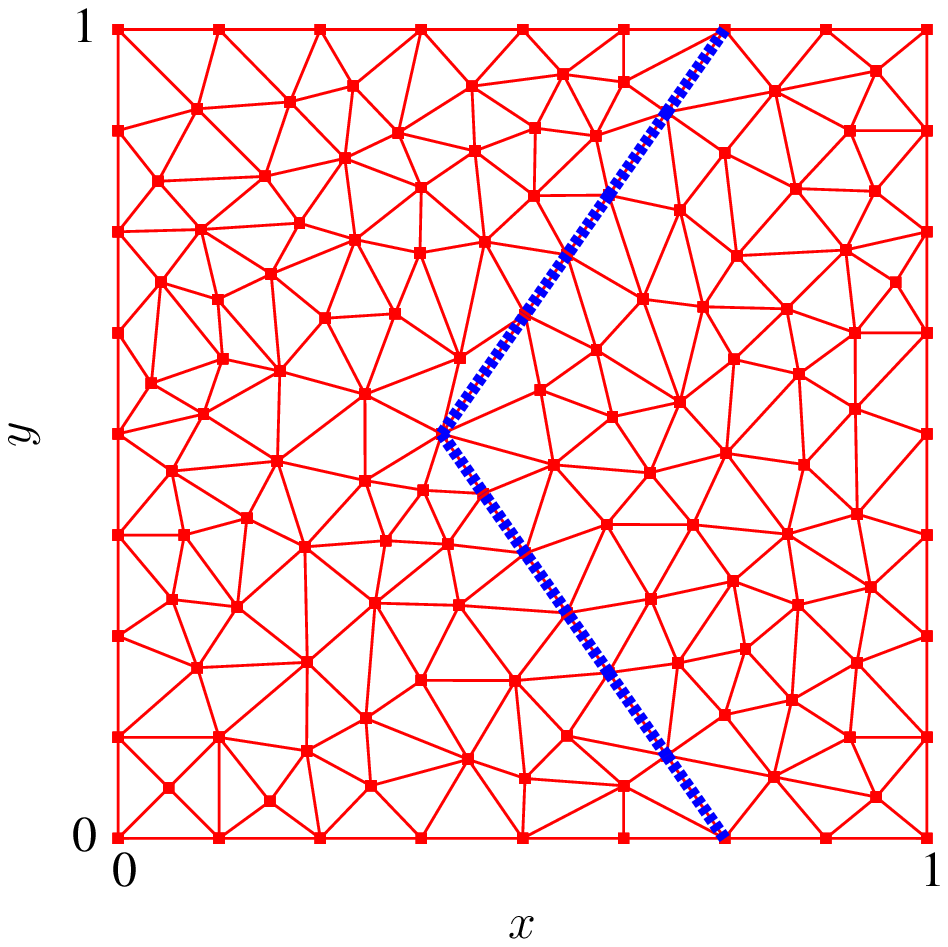, scale=0.6}
  \hspace{1cm}
  \epsfig{file=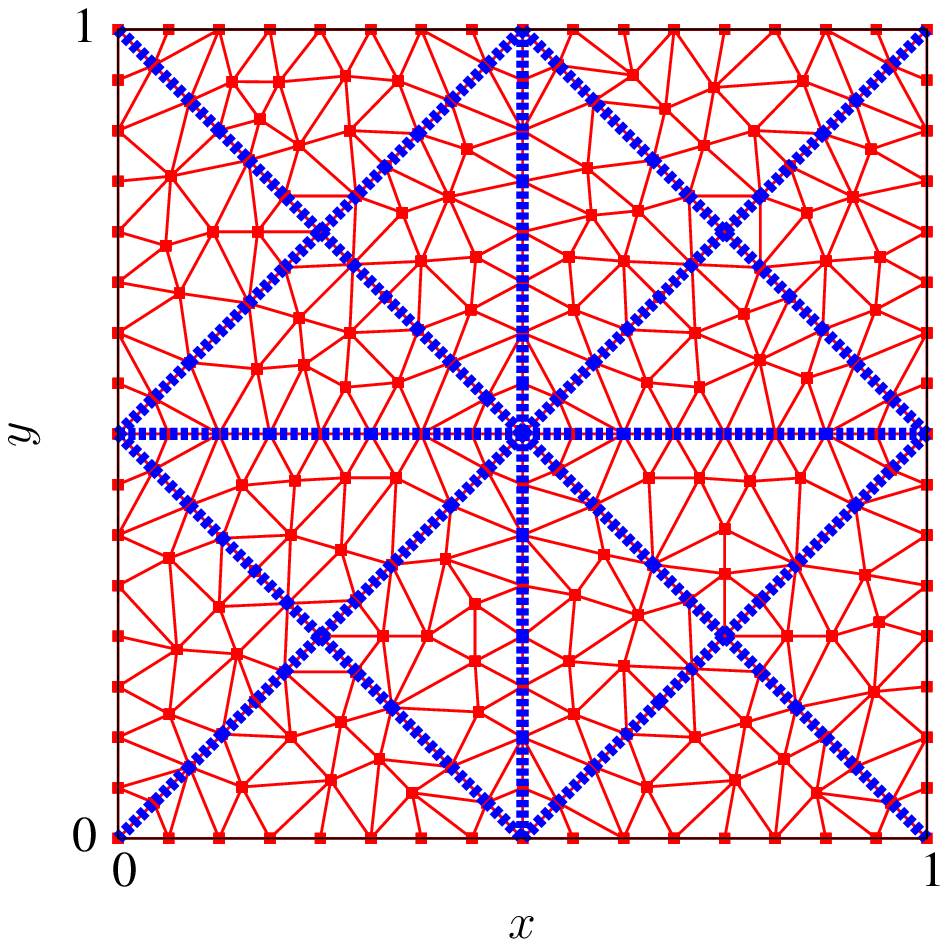, scale=0.6}	
  \caption{An unstructured mesh with the interface $\Gamma$
    (thick-dashed).}
  \label{fig:ddmesh}
\end{figure}
We denote by $H$ the maximum diameter of the subdomains and by $H_\OM$
the diameter of the mono-domain $\Omega$. We assume $0 < h
\leq H < H_\OM$.

We introduce local spaces on $\OM_1$ and $\OM_2$ by
\begin{equation}
  V_{h,i} := \big\{ v \in \Lsp^2(\OM_i) : \left. v \right|_{K \in \Ti{i}} \in \PO^k(K) \big\}, 
  \text{ for } i=1,2.
\end{equation} 
Note that this domain decomposition setting implies $V_h =
V_{h,1} \oplus V_{h,2}$.  We define on the interface the space of
broken single-valued functions by
\begin{equation}
  \Lambda_h := 
  \big\{ \varphi \in \Lsp^2(\GAM) : \left. \varphi \right|_{e \in \GAM} \in \PO^k(e) \big\}.
\end{equation}
For the sake of simplicity we denote the restriction of $v \in V_{h}$ on
$V_{h,i}$ by $v_i$.  Observe that the trace of $v_i \in V_{h,i}$ on
$\GAM$ belongs to $\Lambda_h$.

Let $(u,\lambda), (v,\varphi) \in V_{h} \times \Lambda_h$ and consider
the symmetric bilinear form
\begin{equation}
  \tilde{a}( (u,\lambda), (v,\varphi) ) := \tilde{a}_{\GAM}(\lambda,\varphi) + 
  \sum_{i=1}^{2} \Big( \tilde{a}_i(u_i,v_i) 
  + \tilde{a}_{i\GAM}(v_i,\lambda) + \tilde{a}_{i\GAM}(u_i,\varphi) \Big),
\end{equation}
where
\begin{equation}
  \label{eq:deftildea}
  \begin{array}{rcl}
    \tilde{a}_{\GAM}(\lambda,\varphi) &:=& 2 \dotS{\mu \, \lambda}{\varphi}_{\GAM},
    \\
    \tilde{a}_{i\GAM}(v_i,\varphi) &:=& 
    \dotS{ \PDif{v_i}{\NOR_i}  - \mu v_i }{\varphi}_{\GAM},
  \end{array}
\end{equation}
and
\begin{equation}
  \label{eq:IPHloc}
  \begin{array}{rcl}
    \tilde{a}_i(u_i,v_i) &:=& 
    \eta \dotV{u_i}{v_i}_{\Ti{i}} + \dotV{\nabla u_i}{\nabla v_i}_{\Ti{i}}
    - \dotS{\average{\nabla u_i}}{\jump{v_i}}_{\EPS_i^0} 
    - \dotS{\average{\nabla v_i}}{\jump{u_i}}_{\EPS_i^0}
    \\
    && + \dotS{\frac{\mu}{2} \jump{u_i}}{\jump{v_i}}_{\EPS_i^0}
    - \dotS{\frac{1}{2\mu} \jump{\nabla u_i}}{\jump{\nabla v_i}}_{\EPS_i^0}
    \\
    && - \dotS{ \PDif{u_i}{\NOR_i}}{v_i}_{\partial \OM_i}
    - \dotS{ \PDif{v_i}{\NOR_i}}{u_i}_{\partial \OM_i}
    + \dotS{ {\mu} \, {u_i}}{{v_i}}_{\partial \OM_i}.
  \end{array}
\end{equation}

This is an IPH discretization of the model problem in $\OM_i$ and $\partial
\OM_i$ is treated as a Dirichlet boundary. Therefore
$\tilde{a}_i(\cdot,\cdot)$ inherits coercivity and continuity of the
original bilinear form, $a(\cdot,\cdot)$.

The global bilinear form $\tilde{a}(\cdot,\cdot)$ is also coercive at
the discrete level, if $\alpha>0$ is sufficiently large, independent of
$h$. To see this we introduce an energy norm for all $(v_i,\varphi)
\in V_{h,i} \times \Lambda_h$ such that
\begin{equation}
  \Bnorm{(v_i,\varphi)}{,i}^{2} := 
  \eta \Lnorm{ v_i }_{\Ti{i}}^{2} +
  \Lnorm{ \nabla v_i }_{\Ti{i}}^{2} + 
  \mu \Lnorm{ \jump{v_i} }_{\EPS_i \setminus \GAM}^{2}
  + \mu \Lnorm{ v_i - \varphi }_{\GAM}^{2}, \quad (i=1,2).
\end{equation}
then by definition of $\tilde{a}(\cdot,\cdot)$ for all $(v,\varphi)
\in V_h \times \Lambda_h$ we have
\begin{equation} \label{eq:coercAtilde}
  \begin{array}{rcl}
    \tilde{a}( (v,\varphi), (v,\varphi) ) &=&
    \tilde{a}_\GAM(\varphi,\varphi) + \sum_{i=1}^2
    \big( \tilde{a}_i(v_i,v_i) + 2 \tilde{a}_{i\GAM}(v_i,\varphi) \big),
    \\ 
    &=& \sum_{i=1}^2 \big( \tilde{a}_i(v_i,v_i) + 2 \tilde{a}_{i\GAM}(v_i,\varphi)
    + \half \tilde{a}_\GAM(\varphi,\varphi) \big).
  \end{array}
\end{equation}
We can bound the contribution of each subdomain from below
separately: 
\begin{equation*}
  \begin{array}{rcl}
    \tilde{a}( (v,\varphi), (v,\varphi) ) &=& 
    \sum_{i=1}^2 \eta \norm{v_i}_{\Ti{i}}^2 +
    \norm{\nabla v_i}_{\Ti{i}}^2 
    \\
    && \quad  
    - 2 \dotS{\average{\nabla v_i} }{ \jump{v_i} }_{\EPS_i \setminus \GAM}	
    + \frac{\mu}{2} \norm{\jump{v_i}}_{\EPS_i \setminus \GAM}^2
    - \frac{1}{2 \mu} \norm{ \jump{\nabla v_i} }_{\EPS_i^0}^2
    \\
    && \quad
    -2 \dotS{\PDif{v_i}{\NOR_i}}{v_i - \varphi}_\GAM 
    + \mu \norm{ v_i - \varphi }_\GAM^2,
    \\
    &\geq& c \sum_{i=1}^2 \Bnorm{ (v_i, \varphi) }{,i}^2,
  \end{array}
\end{equation*}
where we used the inverse inequalities (\ref{eq:warburton}) for terms
acting on the interface and (\ref{eq:coerc1}) for terms acting inside
subdomains. Here $0<c<1$ is a constant independent of $h$. Note that
we proved the coercivity in a subdomain by subdomain fashion by
splitting the $\tilde{a}_\GAM(\cdot,\cdot)$ terms.

Consider the following discrete problem: find $(u_h,\lambda_h) \in V_h
\times \Lambda_h$ such that
\begin{equation}
  \label{eq:varIPH2}
  \tilde{a}( (u_h,\lambda_h), (v,\varphi) ) = 
  \dotV{f}{v}_{\Th}, \quad \forall (v,\varphi) \in V_h \times \Lambda_h,
\end{equation}
which has a unique solution since $\tilde{a}(\cdot,\cdot)$ is coercive
on $V_h \times \Lambda_h$.  One can eliminate the interface variable,
$\lambda_h$, and obtain a variational problem in terms of $u_h$
only. It turns out that this coincides with the
variational problem (\ref{eq:varIPH}); for a proof see
\cite{lehrenfeld2010hybrid}.

The advantage of the variational problem (\ref{eq:varIPH2}) is that
each subproblem is communicating through the auxiliary unknown
$\lambda_h$. Therefore we can eliminate the interior unknowns, $u_i$,
and obtain a Schur complement system. If we test (\ref{eq:varIPH2})
with $v_i\not=0$, $v_j=0$ $(j\not=i)$, $\varphi = 0$ and assume
that $\lambda_h$ is known, we obtain a local problem: find $u_{i} \in
V_{h,i}$ such that
\begin{equation} \label{eq:harmonicsat}
  \tilde{a}_i(u_i,v_i) + \tilde{a}_{i\GAM}(v_i,\lambda_h) =
  \dotV{f}{v_i}_{\Ti{i}}, \quad \forall v_i \in V_{h,i}.
\end{equation}
This is an IPH discretization of the continuous problem
\begin{equation*}
  \begin{array}{rcll}
    (\eta-\Delta) u &=& f,\quad & \text{in $\OM_i$},\\
    u &=& \lambda_h, \quad & \text{on $\GAM$}, \\
    u &=& 0, & \text{on $\partial \OM_i \setminus \Gamma$}.
  \end{array}
\end{equation*}
However the boundary condition on $\GAM$ is imposed weakly and therefore
$u_i |_{\GAM} \not = \lambda_h$ in the strong sense, see
\cite{cockburn,hajian2013block,lehrenfeld2010hybrid}.

\subsection{Schur complement formulation} \label{sec:schur}
We choose nodal basis functions for $\PO^k(K)$ and denote the space of
degrees of freedom (DOFs) of $V_h$ by $V$ and similarly for subspaces
by $\{ V_i \}$. The variational form in (\ref{eq:varIPH}) is
equivalent to the linear system $A \uB = \fB$. $A$ is the system
matrix and $\uB \in V$ are the corresponding DOFs of the approximation
$u_h \in V_h$. We can partition $\uB$ into $\{ \uB_i\}$ where $\uB_i$
corresponds to DOFs of $u_{i} \in V_{h,i}$. Then we can arrange the
entries of $A$ and rewrite the linear system as
\begin{equation}
  \MATT{A_1}{A_{12}}{A_{21}}{A_{2}} \Arr{\uB_1}{\uB_2} = \Arr{\fB_1}{\fB_2}.
  \label{eq:linsyspart}
\end{equation}
We use nodal basis functions for $\Lambda_h$ and denote by $\lB$ the
corresponding DOFs for $\lambda_h \in \Lambda_h$.  Then
the variational form (\ref{eq:varIPH2}) can be written as
\begin{equation} \label{eq:DDshur}
  \MATnine{\tA_1}{}{\tA_{1\GAM}}{}{\tA_2}{\tA_{2\GAM}}{\tA_{\GAM 1}}{\tA_{\GAM 2}}{\tA_{\GAM}}
  \Arrtri{\uB_1}{\uB_2}{\lB}
  = \Arrtri{\fB_1}{\fB_2}{0},
\end{equation}
where $\tA_{\GAM i}^{} = \tA_{i\GAM}^{\top}$.  Since this matrix is
s.p.d.~and the same holds also for its diagonal blocks, we can form a
Schur complement system. We define
$
\tB_i := \tA_{\GAM i}^{} \tA_{i}^{-1} \tA_{i\GAM}^{}
$
and
$
\BO{g}_{\GAM}^{} :=  - \sum_{i=1}^{2} \tA_{\GAM i}^{} \tA_{i}^{-1} \fB_i^{}.
$
Then the Schur complement system reads
\begin{equation} \label{eq:shur}
  \tilde{S}_\GAM \lB := \Big( \tA_{\GAM} - \sum_{i=1}^{2} \tB_i \Big) \lB = \BO{g}_{\GAM}.
\end{equation}
\begin{definition}[discrete harmonic extension]\label{HarmExtDef}
For all $\varphi \in \Lambda_h$, we denote by $\Hopt_i(\varphi) \in
V_{h,i}$ the discrete harmonic extension into $\OM_i$,
\begin{equation}
  \Hopt_i(\varphi) \equiv - \tA_{i}^{-1} \tA_{i\GAM}^{} \phiB.
\end{equation}
The corresponding $\varphi$ is called {\normalfont generator}.  In
other words $u_{i} := \Hopt_i(\varphi)$ is an approximation obtained
from the IPH discretization in $\OM_i$ using $\varphi$ as Dirichlet data;
i.e.~$ \tA_{i} \uB_i + \tA_{i\GAM} \phiB = 0 $.
\end{definition}

The following result shows that an application of $\tB_i
\lB$ can be viewed as finding the harmonic extension, $u_i :=
\Hopt_i(\lambda_h)$, and then evaluating a ``Robin-like trace'' on the
interface.
\begin{proposition} \label{prop:Bopt}
Let $\lambda_h \in \Lambda_h$ and define its harmonic extension by $u_i :=
\Hopt_i(\lambda_h)$.  Then
$
\phiB^{\top} \tB_i \lB = \dotS{ \mu u_i - \PDif{u_i}{\NOR_i} }{\varphi}_\GAM
$
for all $\varphi \in \Lambda_h$.
\end{proposition}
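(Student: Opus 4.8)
The plan is to unwind the definition of $\tB_i = \tA_{\GAM i} \tA_i^{-1} \tA_{i\GAM}$ and relate it to the bilinear form $\tilde{a}_{i\GAM}$, then identify the resulting expression as the Robin-like trace. First I would write, using the definition of the harmonic extension, $\uB_i = \Hopt_i(\lambda_h) = -\tA_i^{-1}\tA_{i\GAM}\lB$, so that $\phiB^\top \tB_i \lB = \phiB^\top \tA_{\GAM i}\tA_i^{-1}\tA_{i\GAM}\lB = -\phiB^\top \tA_{\GAM i}\uB_i$. Since $\tA_{\GAM i} = \tA_{i\GAM}^\top$, this equals $-\uB_i^\top \tA_{i\GAM}\phiB$, which by the definition of the matrix blocks coming from the variational form \eqref{eq:varIPH2} is exactly $-\tilde{a}_{i\GAM}(u_i,\varphi)$ with $u_i$ the function in $V_{h,i}$ represented by $\uB_i$ and $\varphi$ represented by $\phiB$.

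Next I would substitute the explicit form of $\tilde{a}_{i\GAM}$ from \eqref{eq:deftildea}, namely $\tilde{a}_{i\GAM}(u_i,\varphi) = \dotS{\PDif{u_i}{\NOR_i} - \mu u_i}{\varphi}_\GAM$. Hence
\begin{equation*}
  \phiB^\top \tB_i \lB = -\tilde{a}_{i\GAM}(u_i,\varphi)
  = -\dotS{\PDif{u_i}{\NOR_i} - \mu u_i}{\varphi}_\GAM
  = \dotS{\mu u_i - \PDif{u_i}{\NOR_i}}{\varphi}_\GAM,
\end{equation*}
which is precisely the claimed identity. The only subtlety worth spelling out is the correspondence between the algebraic objects (the vectors $\phiB$, $\lB$, $\uB_i$ and the block matrices) and the functional objects (the elements $\varphi, \lambda_h \in \Lambda_h$, $u_i \in V_{h,i}$ and the bilinear forms), i.e.\ that the $(\uB_i,\phiB)$-block of the stiffness matrix in \eqref{eq:DDshur} is by construction the Gram-type matrix of $\tilde{a}_{i\GAM}(\cdot,\cdot)$ relative to the chosen nodal bases, so that $\uB_i^\top \tA_{i\GAM}\phiB = \tilde{a}_{i\GAM}(u_i,\varphi)$. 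This is immediate from the way \eqref{eq:DDshur} was derived from \eqref{eq:varIPH2}.

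I do not expect any real obstacle here; the statement is essentially a bookkeeping identity translating the Schur-complement block $\tB_i$ back into the language of the variational form. The one place requiring a little care is getting the signs right: the harmonic extension carries a minus sign ($\uB_i = -\tA_i^{-1}\tA_{i\GAM}\lB$), and $\tilde{a}_{i\GAM}$ is defined with $\PDif{u_i}{\NOR_i} - \mu u_i$ rather than $\mu u_i - \PDif{u_i}{\NOR_i}$, and these two sign flips combine to give the stated $\mu u_i - \PDif{u_i}{\NOR_i}$. Since $\varphi \in \Lambda_h$ was arbitrary, the identity holds for all test functions, completing the proof.
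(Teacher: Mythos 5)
Your proposal is correct and follows essentially the same route as the paper's own proof: expand $\tB_i=\tA_{\GAM i}\tA_i^{-1}\tA_{i\GAM}$, substitute the harmonic extension $\uB_i=-\tA_i^{-1}\tA_{i\GAM}\lB$, and use $\tA_{\GAM i}=\tA_{i\GAM}^{\top}$ together with the definition of $\tilde{a}_{i\GAM}(\cdot,\cdot)$ to read off the Robin-like trace, with the two sign flips handled correctly.
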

\begin{proof}
Let $u_i := \Hopt_i(\lambda_h)$. Then by definition of $\tB_i$ and
$\tilde{a}_{i\GAM}(\cdot,\cdot)$ we have\\
\begin{equation*}
  \phiB^{\top} \tB_i^{} \lB = \phiB^{\top} \tA_{\GAM i}^{} \tA_{i}^{-1} \tA_{i\GAM}^{} \lB 
  = - \phiB^{\top} \tA_{\GAM i}^{} \uB_i^{} = 
  \dotS{ \mu u_i - \PDif{u_i}{\NOR_i} }{\varphi}_\GAM,
\end{equation*}
for all $\varphi \in \Lambda_h$,
which completes the proof, since $\tA_{\GAM i}^{} =
\tA_{i\GAM}^{\top}$. \quad
\end{proof}

\section{Properties of the Schur complement and technical tools} 
\label{sec:tech}

The main goal of this section is to provide estimates for the minimum
and maximum eigenvalues of the $\tilde{S}_\GAM$ and $\tB_i$ for
$i=1,2$. We use the estimate for the $\tB_i$ operators to prove
convergence of the Schwarz method and provide the contraction factor
later in Section \ref{sec:schwarz}. In particular we prove in this
section that the following estimates hold for all $\varphi \in
\Lambda_h$:
\begin{eqnarray}
  \label{eq:Best}
  c_B \, \mu \norm{\varphi}_\GAM^2 &\leq \phiB^\top \tB_i \phiB 
  \leq& \Big( 1 - C_B \frac{h}{H \alpha} \Big) \mu \norm{\varphi}_\GAM^2,
  \\
  \label{eq:Sest}
  c \frac{H}{H_\OM^2} \norm{\varphi}_\GAM^2 &\leq \phiB^\top \tilde{S}_\GAM \phiB 
  \leq& C \frac{\alpha}{h} \norm{\varphi}_\GAM^2,
\end{eqnarray}
where all constants are positive and independent of $h$, $H$ and
$H_\OM$. Since $\tilde{S}_\GAM$ and $\tB_i$ are symmetric, we can use
Rayleigh quotient arguments and obtain an estimate for the minimum and
maximum eigenvalues. One can also obtain an estimate with polynomial
degree dependency using the techniques of this section. 

The only constraint on the shape of the subdomains is a star-shape
assumption.  To prove the above estimates we need trace and Poincar\'e
inequalities for totally discontinuous functions.  The following trace
estimate is due to Feng and Karakashian \cite[Lemma
  3.1]{karakashian}. The Poincar\'e inequality is due to Brenner, see
\cite{brenner-poincare}.
\begin{lemma}[Trace inequality] \label{lemma:karakashian}
  Let $D$ be a star-shape domain with diameter $H_D$, and
  triangulation $\Th$.  Then, for any $u \in {\normalfont
    \Hsp}^1(\Th)$, we have 
  \begin{equation*}
    \norm{ u }_{\partial D}^{2} \leq c 
    \Big[ H_D^{-1} \norm{u}_{D}^{2} + H_D^{} \big( \norm{ \nabla u }_{D}^{2} + 
      h^{-1} \norm{\jump{u}}_{\EPS \setminus \partial D}^{2} \big) \Big].
  \end{equation*}
\end{lemma}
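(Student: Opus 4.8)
The plan is to use the classical star-shape vector-field argument, applied element by element, keeping careful track of the interior-edge contributions that appear because $u$ is only piecewise $\Hsp^1$. By the star-shape assumption there is a point $x_0$ such that the vector field $\Psi(x):=x-x_0$ satisfies $|\Psi|\leq H_D$ on $\overline D$, $\nabla\cdot\Psi=2$, and $\Psi\cdot\NOR\geq\gamma H_D$ a.e.\ on $\partial D$, where $\gamma>0$ depends only on the chunkiness parameter of $D$; consequently the final constant $c$ will depend on shape-regularity and on this parameter.

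First I would apply the divergence theorem on each $K\in\Th$ to $u^2\Psi$ (legitimate since $u|_K\in\Hsp^1(K)$) and sum over $K$. On the left, boundary edges contribute $\int_{\partial D}u^2\,(\Psi\cdot\NOR)$, while an interior edge $e=\partial K_1\cap\partial K_2$ contributes $\int_e(u_1^2-u_2^2)(\Psi\cdot\NOR_1)$; using $\jump{u}=(u_1-u_2)\NOR_1$ and $u_1+u_2=2\average{u}$, this equals $2\int_e\average{u}\,(\jump{u}\cdot\Psi)$. On the right, $\nabla\cdot(u^2\Psi)=2u^2+2u\,\Psi\cdot\nabla u$ with $\nabla$ the piecewise gradient. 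Rearranging gives the identity
\[
  \norm{u}_{\partial D}^2 \;\leq\; \frac{1}{\gamma H_D}\Big( 2\norm{u}_D^2 + 2\int_D u\,\Psi\cdot\nabla u - 2\!\!\sum_{e\in\EPS\setminus\partial D}\int_e\average{u}\,(\jump{u}\cdot\Psi)\Big).
\]

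Next I would bound the right-hand side. The volume terms give, by Cauchy--Schwarz and Young, $2\norm{u}_D^2+2H_D\norm{u}_D\norm{\nabla u}_D\leq C(\norm{u}_D^2+H_D^2\norm{\nabla u}_D^2)$. For the jump sum, $\big|\int_e\average{u}\,(\jump{u}\cdot\Psi)\big|\leq H_D\norm{\average{u}}_e\norm{\jump{u}}_e$, and I control $\norm{\average{u}}_e$ by the standard scaled trace inequality on a single shape-regular simplex, $\norm{w}_{\partial K}^2\leq c(h_K^{-1}\norm{w}_K^2+h_K\norm{\nabla w}_K^2)$ for $w\in\Hsp^1(K)$ (itself the same vector-field argument on one simplex; cf.\ the polynomial version \eqref{eq:warburton}). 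With quasi-uniformity $h_K\approx h$ this yields $\norm{\average{u}}_e\leq c\sum_{j}\big(h^{-1/2}\norm{u}_{K_j}+h^{1/2}\norm{\nabla u}_{K_j}\big)$. Splitting the weights as $H_D\norm{\jump{u}}_e h^{-1/2}\norm{u}_{K_j}=(H_D^{1/2}h^{-1/2}\norm{\jump{u}}_e)(H_D^{1/2}\norm{u}_{K_j})$, applying Young, using $h\leq H_D$, and invoking the finite overlap of the element sets (each simplex has at most three edges), I get
\[
  H_D\!\!\sum_{e\in\EPS\setminus\partial D}\!\!\norm{\average{u}}_e\norm{\jump{u}}_e \;\leq\; C\Big( H_D^2 h^{-1}\norm{\jump{u}}_{\EPS\setminus\partial D}^2 + \norm{u}_D^2 + H_D^2\norm{\nabla u}_D^2 \Big).
\]
Substituting into the identity and dividing by $\gamma H_D$ gives exactly the stated inequality.

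The only genuinely delicate point—the main obstacle—is that $u$ is totally discontinuous, so element-wise integration by parts necessarily produces the interior-edge terms; one must notice that pairing $\jump{u}$ with $\average{u}$ and then using the single-element trace inequality makes these terms controllable precisely by $h^{-1}\norm{\jump{u}}^2$, and the bookkeeping of the powers of $H_D$ and $h$ is what makes the constants come out in the advertised form. The construction of $\Psi$ with $\Psi\cdot\NOR\gtrsim H_D$ also quietly uses bounded chunkiness of $D$, which is why $c$ is allowed to depend on it.
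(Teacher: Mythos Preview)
The paper does not actually prove this lemma: it is quoted verbatim as \cite[Lemma~3.1]{karakashian} and used as a black box, so there is no ``paper's own proof'' to compare against. That said, your argument is correct and is essentially the one Feng and Karakashian give: the Rellich-type identity obtained by integrating $\nabla\cdot(u^2\Psi)$ elementwise with the star-shape vector field $\Psi(x)=x-x_0$, collecting the interior-edge defect as $2\int_e\average{u}\,(\jump{u}\cdot\Psi)$, and then controlling $\norm{\average{u}}_e$ by the scaled single-element trace inequality. Your bookkeeping of the $H_D$ and $h$ powers via the splitting $(H_D h^{-1/2}\norm{\jump{u}}_e)(\norm{u}_{K_j})$ and $(H_D h^{-1/2}\norm{\jump{u}}_e)(h\norm{\nabla u}_{K_j})$ followed by Young and $h\le H_D$ is exactly what is needed; the finite-overlap argument is also correct since each triangle has a bounded number of edges. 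The only silent hypothesis you invoke, namely $\Psi\cdot\NOR\ge\gamma H_D$ on $\partial D$, indeed requires star-shapedness with respect to a ball of radius comparable to $H_D$ (bounded chunkiness), which is the standard reading of ``star-shape'' in this context.
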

\begin{lemma}[Poincar\'e inequality]\label{lemma:poincare}
Let $D$ be an open connected polygonal domain with diameter
$H_D$, and triangulation $\Th$. Then, for any $u \in
{\normalfont \Hsp}^1(\Th)$ we have
\begin{equation*}
  \norm{ u }_{D}^2 \leq c H_D^2 
  \Big[ \norm{ \nabla u }_{D}^2 
  + h^{-1} \norm{ \jump{ u } }_{\EPS \setminus \partial D}^2
  + h^{-1} \norm{ { u } }_{ \nu}^2
  \Big],
\end{equation*}
where $\nu$ is a measurable subset of $\partial D$ with nonzero measure. 
\end{lemma}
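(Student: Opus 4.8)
The plan is to prove this along the lines of Brenner's piecewise Poincar\'e--Friedrichs inequality \cite{brenner-poincare}, namely by a scaling argument followed by a reduction to the piecewise-constant case. First I would normalise: under the dilation $x \mapsto x/H_D$, writing $\hat u(\hat x) = u(H_D\hat x)$, one checks that the left side carries a factor $H_D^2$ while the three terms $\norm{\nabla u}_D^2$, $h^{-1}\norm{\jump{u}}_{\EPS\setminus\partial D}^2$ and $h^{-1}\norm{u}_\nu^2$ on the right are scale invariant --- the mesh size scales as $h/H_D$, which exactly compensates the scaling of the edge integrals. Hence it suffices to prove the inequality for $H_D = 1$, with a constant depending only on the shape-regularity of $\Th$ and on the (normalised) geometry of $D$.

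Next I would split off the ``conforming mean''. Let $\bar u$ be the piecewise-constant function with $\bar u|_K = |K|^{-1}\int_K u$. The elementwise Poincar\'e--Wirtinger inequality gives $\norm{u - \bar u}_K \le c\, h_K\norm{\nabla u}_K$, so that $\norm{u - \bar u}_D^2 \le c\,\norm{\nabla u}_D^2$ (recall $h \le H_D = 1$). Applying Lemma \ref{lemma:karakashian} on each element $K$ --- a convex, hence star-shaped, domain of diameter $h_K$ whose triangulation is $\{K\}$ itself, so that the interior-jump contribution there is void --- to $u - \bar u$, and using $\norm{u - \bar u}_K \le c\, h_K\norm{\nabla u}_K$ once more, yields $h^{-1}\norm{u - \bar u}_{\partial K}^2 \le c\,\norm{\nabla u}_K^2$; summing over $K$ this controls both $h^{-1}\norm{\jump{u - \bar u}}_{\EPS\setminus\partial D}^2$ and $h^{-1}\norm{u - \bar u}_\nu^2$ by $c\,\norm{\nabla u}_D^2$. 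By the triangle inequality the proof then reduces to the \emph{discrete} Poincar\'e--Friedrichs estimate for the piecewise constant $\bar u$, namely $\norm{\bar u}_D^2 \le c\big(h^{-1}\norm{\jump{\bar u}}_{\EPS\setminus\partial D}^2 + h^{-1}\norm{\bar u}_\nu^2\big)$ at $H_D = 1$; each term on its right is in turn reabsorbed into the corresponding term for $u$ plus $c\,\norm{\nabla u}_D^2$.

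It remains to prove the piecewise-constant case. On a quasi-uniform mesh, $\norm{\bar u}_D^2$ is comparable to $h^2\sum_K \bar u_K^2$, the quantity $h^{-1}\norm{\jump{\bar u}}_e^2$ to $|\bar u_{K_1} - \bar u_{K_2}|^2$ across an interior edge $e = \partial K_1 \cap \partial K_2$, and $h^{-1}\norm{\bar u}_\nu^2$ to $\sum_{e\subset\nu}\bar u_{K_e}^2$; thus the estimate is a Poincar\'e--Friedrichs inequality on the element-adjacency graph of $\Th$, a graph with $\Ont(h^{-2})$ vertices and graph-distance $\Ont(h^{-1})$ to the elements touching $\nu$. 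The natural proof fixes, for every element $K$, a chain $K = K^{(0)}, K^{(1)}, \dots, K^{(L_K)}$ of successively adjacent elements ending at an element touching $\nu$, with length $L_K = \Ont(h^{-1})$, telescopes $\bar u_K = \bar u_{K^{(L_K)}} + \sum_{j=1}^{L_K}\big(\bar u_{K^{(j-1)}} - \bar u_{K^{(j)}}\big)$, applies Cauchy--Schwarz, and sums over $K$; the powers of $h$ then match ($h^2\cdot h^{-1}\cdot h^{-1} = 1$) provided the chains are chosen with bounded overlap, i.e.~each graph edge is used by $\Ont(1)$ of the chains. I expect exactly this last point to be the main obstacle: for a general open connected polygonal $D$ one must construct such a family of element-chains reaching $\nu$ whose overlap is bounded independently of $h$ (the lengths being necessarily of order $H_D/h$). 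For a star-shaped $D$ essentially radial chains do the job, and the general connected case is reduced to this by covering $D$ with finitely many overlapping star-shaped pieces and concatenating --- the construction carried out in \cite{brenner-poincare}.
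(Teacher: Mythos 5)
First, a point of reference: the paper does not prove this lemma at all --- it is quoted verbatim from Brenner \cite{brenner-poincare} --- so there is no internal proof to compare against, and your argument has to stand on its own. The preparatory steps do: the dilation argument is correct in $d=2$ (the left-hand side scales like $H_D^2$, the gradient and the $h^{-1}$-weighted edge terms are scale invariant), and the reduction to piecewise constants via the elementwise Poincar\'e--Wirtinger inequality plus the elementwise trace inequality is fine (the comparison $h_K \approx h$ uses quasi-uniformity, which the paper assumes). The problem is that after these reductions the entire content of the lemma sits in the discrete Poincar\'e--Friedrichs inequality for piecewise constants, and that step is only asserted.

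Two concrete issues there. (i) Your stated requirement that ``each graph edge is used by $\Ont(1)$ of the chains'' is both unachievable and not what your own bookkeeping uses: there are $\Ont(h^{-2})$ elements, each chain has length $\Ont(H_D/h)=\Ont(h^{-1})$, so the total number of edge-usages is $\Ont(h^{-3})$ while there are only $\Ont(h^{-2})$ edges; the average usage is therefore $\Ont(h^{-1})$, and indeed your power count $h^2\cdot h^{-1}\cdot h^{-1}=1$ implicitly budgets for $\Ont(h^{-1})$ usages per edge. The correct (and attainable) condition is that every edge is used by at most $C h^{-1}$ chains, uniformly, and producing such a family of chains reaching a fixed $\nu\subset\partial D$ for a general connected polygonal $D$ is precisely the nontrivial construction you would have to carry out (it is done, for instance, in the finite-volume literature by running chains along a fixed family of directions). (ii) You cannot delegate this construction to \cite{brenner-poincare}: Brenner's proof contains no chaining argument. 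Her route is to approximate the piecewise $\Hsp^1$ function by a Crouzeix--Raviart function with the same edge averages, then by a conforming piecewise-linear function obtained by nodal averaging, bound both differences by the broken gradient and the jump terms, and apply the continuous Poincar\'e--Friedrichs inequality to the conforming part; this also avoids quasi-uniformity and yields the $H_D^2$ dependence directly from the continuous inequality. So either carry out the chain construction with the corrected $\Ont(h^{-1})$ overlap condition, or switch to the enrichment argument actually used in the cited reference; as written, the final and decisive step of your proof is not closed.
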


\subsection{Eigenvalue estimates for $\tB_i$}

In order to obtain estimates for the eigenvalues of the $\tB_i$
operator, we first recall Definition \ref{HarmExtDef} of a harmonic
extension: $u_i \in V_{h,i}$ is called harmonic extension of $\varphi
\in \Lambda_h$ if it satisfies $\tA_i \uB_i + \tA_{i\GAM} \phiB =
0$. Now multiplying this relation by $\uB_i^\top$ from left we get
\begin{equation*}
  \begin{array}{llcl}
    &
    \uB_i^\top \tA_i^{} \uB_i^{} + \uB_i^\top \tA_{i\GAM}^{} \phiB^{}
    &=& 0
    \\
    \Leftrightarrow &
    \uB_i^\top \tA_i^{} \uB_i^{} - \phiB^\top \tA_{\GAM i}^{}
    \tA_i^{-1} \tA_{i\GAM}^{} \phiB^{}
    &=& 0
    \\
    \Leftrightarrow &
    \uB_i^\top \tA_i^{} \uB_i^{} - \phiB^\top \tB_i \phiB &=& 0,
  \end{array}
\end{equation*}
where we used $\uB_i^{} = - \tA_i^{-1} \tA_{i\GAM}^{} \phiB$,
$\tA_{\GAM i}^{} = \tA_{i\GAM}^\top$ and the definition of
$\tB_i$. Hence if $u_i = \Hopt_i(\varphi)$ then we have
\begin{equation} \label{eq:BtoA}
  \phiB^\top \tB_i \phiB = \tilde{a}_i(u_i, u_i).
\end{equation}
Now recall that $\tilde{a}_i(\cdot,\cdot)$ is coercive and bounded
over $V_{h,i}$, therefore $\underline{c} \DGnorm{u_i}{}^2 \leq
\tilde{a}_i(u_i,u_i) \leq \overline{C} \DGnorm{u_i}{}^2$.  Thus if we
relate the energy norm of the harmonic extension, $u_i :=
\Hopt_i(\varphi) \in V_{h,i}$, to the $\Lsp^2$-norm of $\varphi$ we
obtain the desired estimate (\ref{eq:Best}). More precisely we can
show that the estimate
\begin{equation} \label{eq:harmIneq}
  {c}_{\Hopt} \cdot \mu \norm{ \varphi }_{\GAM}^{2} \leq 
  \DGnorm{ u_i }{}^{2} 
  \leq {C}_{\Hopt} \cdot \mu \norm{ \varphi }_{\GAM}^{2}
\end{equation}  
holds, where $0 < c_\Hopt < 1$ and $C_\Hopt > 1$ are constants
independent of $h$. Observe that $C_\Hopt > 1$ while the upper bound
estimate in (\ref{eq:Best}) is less than one. We show later how one can
obtain a sharp upper bound estimate as in (\ref{eq:Best}).

Let us start with the lower bound of inequality (\ref{eq:harmIneq}).
First we introduce an extension by zero operator $ \Zopt_i : \Lambda_h
\rightarrow V_{h,i} $ which is defined for all $\varphi \in \Lambda_h$
as
\begin{equation*}
  \Zopt_i( \varphi ) :=
  \left\{
  \begin{array}{ll}
    \varphi & \text{on edges belonging to $\GAM$},
    \\
    0	& \text{on other nodes}.
  \end{array}
  \right.
\end{equation*}
For a graphical illustration see Figure \ref{fig:extzero}.  
\begin{figure}
  \centering \epsfig{file=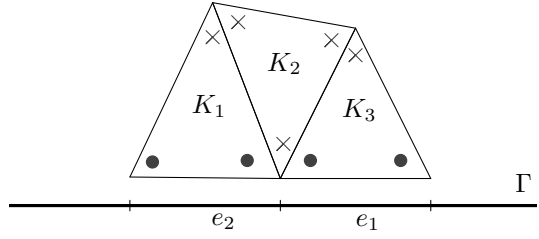, scale=1}
  \caption{Illustration of the extension by zero, $\Zopt_i(\varphi)$,
    for elements which share an edge with the interface,
    e.g.~$\{K_1,K_3\}$, and those which do not, e.g.~$K_2$.}
 \label{fig:extzero}
\end{figure}
Note that there are elements like $K_2$ which physically share a node
and not an {\it edge} with the interface, but we leave
$\Zopt_i(\varphi)$ in $K_2$ to be zero.  More precisely, only those
elements which share an edge with the interface are non-zero.

We show in the Appendix, see also \cite{qin}, that in an element, $K
\in \Ti{i}$, with an edge $e \in \GAM$ we have
\begin{equation} \label{eq:zoptineq}
  \begin{array}{lcl}
    \norm{ \Zopt_i (\varphi ) }_{K}^2
    &\leq& C_3\, h \norm{ \varphi }_{e}^2,
    \\
    \norm{ \nabla \Zopt_i( \varphi ) }_{K}^2 &\leq& {C_4}{h^{-1}}
    \norm{ \varphi }_{e}^2, 
    \\ \norm{ \jump{ \Zopt_i(\varphi ) } }_{\EPS_i}^2 &\leq& C_5 \norm{ \varphi
    }_{\GAM}^{2},
  \end{array}
\end{equation}
where $C_3>0$,  $C_4>0$ and $ C_5 \geq 1 $ and all are independent of
$h$. This yields the following result which relates the energy of the
extension by zero to its $\Lsp^2$-norm on the interface.
\begin{lemma} \label{lemma:Zoptphi}
  Let $\varphi \in \Lambda_h$ and $\Zopt_i( \varphi )$ be its extension
  by zero into $\OM_i$. We have
  \begin{equation*}
    \DGnorm{ \Zopt_i(\varphi) }{}^{2} \leq \mu\, C_\theta \, \norm{
      \varphi }_{\GAM}^{2},
  \end{equation*}
  where $C_\theta = C_3 \eta + C_4 \alpha^{-1} + C_5 > 1$.
\end{lemma}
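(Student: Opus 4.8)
The plan is to estimate the three ingredients of $\DGnorm{\Zopt_i(\varphi)}{}^2$ directly from the definition of the DG-norm and the local bounds in \eqref{eq:zoptineq}. Recall
\begin{equation*}
  \DGnorm{\Zopt_i(\varphi)}{}^2 = \eta \norm{\Zopt_i(\varphi)}_{\Ti{i}}^2 + \norm{\nabla \Zopt_i(\varphi)}_{\Ti{i}}^2 + \sum_{e \in \EPS_i} \mu_e \norm{\jump{\Zopt_i(\varphi)}}_e^2,
\end{equation*}
so the first step is to observe that $\Zopt_i(\varphi)$ is supported only on the layer of elements $K \in \Ti{i}$ having an edge on $\GAM$; all other elemental contributions vanish. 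Hence each of the three global sums reduces to a sum over that single layer of elements (resp.\ over $\GAM$ and the edges adjacent to that layer).

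\textbf{Main steps.} First I would bound the $\Lsp^2$-term: summing the first inequality of \eqref{eq:zoptineq} over the boundary layer gives $\eta \norm{\Zopt_i(\varphi)}_{\Ti{i}}^2 \leq \eta\, C_3\, h \sum_{e \in \GAM} \norm{\varphi}_e^2 = \eta\, C_3\, h \norm{\varphi}_\GAM^2$. Since $\mu_e = \alpha h_e^{-1} \approx \alpha h^{-1}$ by quasi-uniformity, we have $h \approx \alpha \mu^{-1} \cdot \alpha^{-1} \cdot \alpha = \mu^{-1}\alpha$; more simply $h = \alpha/\mu$, so $\eta C_3 h \norm{\varphi}_\GAM^2 = \eta C_3 \alpha \mu^{-1}\norm{\varphi}_\GAM^2$. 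Hmm — that is the wrong direction. Let me instead just factor out $\mu$ the way the statement wants: from $\mu \approx \alpha h^{-1}$ we get $h \approx \alpha \mu^{-1}$, but we want an \emph{upper} bound by $\mu \norm{\varphi}_\GAM^2$, so I should write $h \norm{\varphi}_e^2 \le h \cdot (\mu_e/\mu_e)\norm{\varphi}_e^2$ and use $h \le$ const to get $h = (\alpha/\mu_e) \le$ — no. The clean route: since $\mu_e h_e = \alpha$ and $h_e \le \hmax$, we have $\eta C_3 h_e \norm{\varphi}_e^2 = \eta C_3 \alpha \mu_e^{-1}\norm{\varphi}_e^2$, which is $\le \mu_e \norm{\varphi}_e^2$ only if $\mu_e^2 \ge \eta C_3 \alpha$, i.e.\ for $h$ small enough. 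That is fine but ugly. Actually the intended reading is surely that $C_\theta = C_3\eta + C_4\alpha^{-1} + C_5$ already absorbs these: for the $\Lsp^2$ term one writes $\eta\,C_3\,h\norm{\varphi}_e^2$ and then uses $h \le \mu^{-1}\alpha$ is not what's wanted either. I'll reconcile this in the writeup by noting $\mu_e = \alpha h_e^{-1}$ so $h_e = \alpha\mu_e^{-1} \le \mu_e^{-1}\cdot(\alpha/\mu_e)\cdot\mu_e$... The correct and honest statement: $h_e\,\mu_e = \alpha$ hence $h_e = \alpha/\mu_e \le \mu_e$ fails in general; but $\eta C_3 h_e\norm{\varphi}_e^2 \le \mu_e\norm{\varphi}_e^2$ holds once $\mu_e^2\ge\eta C_3\alpha$, which holds for $h$ sufficiently small since $\mu_e^2 = \alpha^2 h_e^{-2}\to\infty$. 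So in the final writeup I will say: \emph{for $h$ sufficiently small} (or absorbing $\eta$ into the threshold on $\alpha$), the $\Lsp^2$-term is $\le \eta C_3 \mu\norm{\varphi}_\GAM^2$... Actually, cleanest: keep the factor explicit. I will present it so that $\eta\norm{\Zopt_i(\varphi)}_{\Ti i}^2 \le \eta C_3 h \norm{\varphi}_\GAM^2 \le \eta C_3 \mu \norm{\varphi}_\GAM^2$ whenever $h \le \mu$, i.e.\ $h^2\le\alpha$, true for small $h$. Second, the gradient term: summing the second inequality of \eqref{eq:zoptineq} gives $\norm{\nabla\Zopt_i(\varphi)}_{\Ti i}^2 \le C_4 h^{-1}\norm{\varphi}_\GAM^2 = (C_4/\alpha)\,\mu\,\norm{\varphi}_\GAM^2$, using $h^{-1} = \mu/\alpha$ exactly — this is the clean one. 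Third, the jump term: $\sum_{e\in\EPS_i}\mu_e\norm{\jump{\Zopt_i(\varphi)}}_e^2 \le \mu\,\norm{\jump{\Zopt_i(\varphi)}}_{\EPS_i}^2 \le \mu\,C_5\norm{\varphi}_\GAM^2$ by the third inequality of \eqref{eq:zoptineq} (with a harmless quasi-uniformity constant to pass from $\mu_e$ to $\mu$). Adding the three bounds yields $\DGnorm{\Zopt_i(\varphi)}{}^2 \le (C_3\eta + C_4\alpha^{-1} + C_5)\,\mu\,\norm{\varphi}_\GAM^2 = C_\theta\,\mu\,\norm{\varphi}_\GAM^2$, and $C_\theta > 1$ because $C_5 \ge 1$.

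\textbf{Main obstacle.} The only real subtlety is bookkeeping the mesh-parameter conversions $\mu_e = \alpha h_e^{-1}$ together with quasi-uniformity ($h \approx h_e \approx h_K$), so that the $h$ and $h^{-1}$ factors in \eqref{eq:zoptineq} convert cleanly into the single factor $\mu$ claimed in the lemma, and handling the $\eta$-term where one needs $h$ small (equivalently $\mu$ large, consistent with the standing assumption that $\alpha$ is large enough). Everything else is a finite sum over the one-element-thick boundary layer and a direct application of \eqref{eq:zoptineq}. I would also remark that the support restriction (only edge-adjacent elements contribute, not merely vertex-adjacent ones like $K_2$ in Figure~\ref{fig:extzero}) is what makes the sums collapse onto $\GAM$.
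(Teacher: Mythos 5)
Your proposal is correct and takes essentially the same route as the paper: reduce to the layer of interface-adjacent elements, sum the three local bounds \eqref{eq:zoptineq}, and convert $h^{-1}$ into $\mu/\alpha$ via $\mu = \alpha h^{-1}$. The $\eta$-term subtlety you wrestled with ($C_3\,\eta\, h\,\norm{\varphi}_\GAM^2 \le C_3\,\eta\,\mu\,\norm{\varphi}_\GAM^2$ requires $h \le \mu$, i.e.\ $h^2 \le \alpha$) is implicitly present in the paper's proof as well, which passes over it silently since $h$ is small and $\alpha$ is taken large.
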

\begin{proof}
  First note that by definition $\Zopt_i(\varphi)$ and $ \nabla
  \Zopt_i(\varphi) $ are non-zero only on those elements which share
  an edge with the interface. We call them $\{ K_{\GAM} \} \subset
  \Ti{i}$. Then we have
  \begin{equation*}
    \begin{array}{rcll}
      \DGnorm{ \Zopt_i(\varphi) }{}^2 &=& 
      \displaystyle 
      \sum_{K \in \{K_{\GAM} \}} \eta \norm{ \Zopt_i( \varphi ) }_{K}^2 +
      \norm{ \nabla \Zopt_i( \varphi ) }_{K}^2 + 
      \mu \norm{ \jump{ \Zopt_i(\varphi) } }_{\EPS_i}^{2}
      \\
      &\leq& 
      C_3 \, \eta \, h \norm{ \varphi }_{\GAM}^{2} +
      \frac{C_4}{{h}} \norm{ \varphi }_{\GAM}^{2} + 
      C_5 \, \mu \norm{ \varphi }_{\GAM}^{2}
      \\			
      &\leq&
      \mu \left( C_3 \, \eta + \frac{C_4}{\alpha}  + C_5  \right) \norm{ \varphi }_{\GAM}^{2},
    \end{array}
\end{equation*}
which completes the proof with $C_\theta :=  C_3 \, \eta + \frac{C_4}{\alpha} + C_5 
> 1$. \quad
\end{proof}

Now we are able to relate the energy of a harmonic extension, $u_i
:= \Hopt_i(\varphi)$, to the $\Lsp^2$-norm of $\varphi$ on the
interface. 
\begin{lemma} \label{lemma:trinvphi}
  Let $\varphi \in \Lambda_h$ and $u_i := \Hopt_i(\varphi)$ be its
  harmonic extension into $\OM_i$. Then we have
  \begin{equation*}
    {c}_{\Hopt} \cdot \mu \norm{ \varphi }_{\GAM}^{2} \leq \DGnorm{ u_i }{}^{2},
  \end{equation*}
  where $c_{\Hopt} = (1 - \frac{c}{\alpha})^2 \cdot \frac{1}{C_\theta
    \overline{C}^2} < 1$.
\end{lemma}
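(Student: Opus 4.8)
The plan is to combine the variational characterization of the harmonic extension with a Cauchy--Schwarz argument in the $\tilde{a}_i$-inner product, and then to feed in the extension by zero $\Zopt_i(\varphi)$ as an explicit test function. By Definition \ref{HarmExtDef} (equivalently (\ref{eq:harmonicsat}) with $f=0$), $u_i := \Hopt_i(\varphi)$ satisfies $\tilde{a}_i(u_i,v_i) = -\tilde{a}_{i\GAM}(v_i,\varphi)$ for all $v_i \in V_{h,i}$. Since $\tilde{a}_i(\cdot,\cdot)$ is symmetric and coercive on $V_{h,i}$, it is a genuine inner product, so Cauchy--Schwarz yields, for every $w \in V_{h,i}$,
\[
  [\tilde{a}_{i\GAM}(w,\varphi)]^2 = [\tilde{a}_i(u_i,w)]^2 \leq \tilde{a}_i(u_i,u_i)\,\tilde{a}_i(w,w),
\]
and therefore $\tilde{a}_i(u_i,u_i) \geq [\tilde{a}_{i\GAM}(w,\varphi)]^2 / \tilde{a}_i(w,w)$ whenever $w \neq 0$. (For $\varphi=0$ the claim is trivial, so assume $\varphi\neq0$ and note $\Zopt_i(\varphi)\neq0$.) I would then insert $w=\Zopt_i(\varphi)$ and estimate the numerator from below and the denominator from above.

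For the numerator, recall $\tilde{a}_{i\GAM}(\Zopt_i(\varphi),\varphi) = \dotS{\PDif{\Zopt_i(\varphi)}{\NOR_i} - \mu\,\Zopt_i(\varphi)}{\varphi}_\GAM$. Because $\Zopt_i(\varphi)=\varphi$ on $\GAM$, the penalty part contributes exactly $-\mu\norm{\varphi}_\GAM^2$, while the consistency part is only a lower-order perturbation: applying the trace inequality (\ref{eq:warburton}) to $\PDif{\Zopt_i(\varphi)}{\NOR_i}$ on the boundary elements and then the gradient bound of (\ref{eq:zoptineq}) gives $\norm{\PDif{\Zopt_i(\varphi)}{\NOR_i}}_\GAM \leq \frac{c}{h}\norm{\varphi}_\GAM = \frac{c}{\alpha}\,\mu\norm{\varphi}_\GAM$. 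Hence
\[
  |\tilde{a}_{i\GAM}(\Zopt_i(\varphi),\varphi)| \;\geq\; \mu\norm{\varphi}_\GAM^2 - \tfrac{c}{\alpha}\,\mu\norm{\varphi}_\GAM^2 \;=\; \Big(1 - \tfrac{c}{\alpha}\Big)\mu\norm{\varphi}_\GAM^2 > 0,
\]
valid once $\alpha$ is large enough. This is precisely the source of the factor $(1-\tfrac{c}{\alpha})^2$ after squaring. For the denominator, boundedness of $\tilde{a}_i(\cdot,\cdot)$ together with Lemma \ref{lemma:Zoptphi} gives $\tilde{a}_i(\Zopt_i(\varphi),\Zopt_i(\varphi)) \leq \overline{C}\,\DGnorm{\Zopt_i(\varphi)}{,\ast}^2 \lesssim \overline{C}\,C_\theta\,\mu\norm{\varphi}_\GAM^2$ (up to the discrete norm-equivalence constant of Proposition~1), which is where $C_\theta$ and one power of $\overline{C}$ enter.

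Combining the two bounds gives $\tilde{a}_i(u_i,u_i) \gtrsim \frac{(1-c/\alpha)^2}{\overline{C}\,C_\theta}\,\mu\norm{\varphi}_\GAM^2$, and since $\tilde{a}_i(u_i,u_i)=\phiB^\top\tB_i\phiB$ by (\ref{eq:BtoA}) this already yields the lower bound in (\ref{eq:Best}). To convert it into a statement about $\DGnorm{u_i}{}$ I would apply boundedness once more, now to $u_i$ itself: $\tilde{a}_i(u_i,u_i) \leq \overline{C}\,\DGnorm{u_i}{,\ast}^2 \lesssim \overline{C}\,\DGnorm{u_i}{}^2$, so that $\DGnorm{u_i}{}^2 \geq c_\Hopt\,\mu\norm{\varphi}_\GAM^2$ with $c_\Hopt = (1-\tfrac{c}{\alpha})^2/(C_\theta\overline{C}^2)$, the equivalence constant $C$ being absorbed into the definition of the boundedness constant. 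The \textbf{main obstacle} is controlling the consistency cross term $\dotS{\PDif{\Zopt_i(\varphi)}{\NOR_i}}{\varphi}_\GAM$: one must show it is an $O(1/\alpha)$ relative perturbation of the dominant penalty term $\mu\norm{\varphi}_\GAM^2$. This is exactly the place where the IPH scaling $\mu=\alpha h^{-1}$ with $\alpha$ large is essential, since it is what guarantees $c_\Hopt>0$; a weaker penalty would let the consistency term overwhelm the penalty contribution and the lower bound would fail.
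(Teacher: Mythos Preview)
Your proof is correct and follows essentially the same route as the paper: test the harmonic-extension identity with $\Zopt_i(\varphi)$, show that the consistency contribution $\dotS{\partial_{\NOR_i}\Zopt_i(\varphi)}{\varphi}_\GAM$ is an $O(1/\alpha)$ perturbation of the dominant penalty term $\mu\norm{\varphi}_\GAM^2$ via (\ref{eq:warburton}) and (\ref{eq:zoptineq}), and control $\Zopt_i(\varphi)$ in energy via Lemma~\ref{lemma:Zoptphi}. The only difference is organizational: the paper applies boundedness directly to the cross term, $\tilde{a}_i(u_i,\Zopt_i(\varphi)) \leq \overline{C}\,\DGnorm{u_i}{}\,\DGnorm{\Zopt_i(\varphi)}{}$, and isolates $\DGnorm{u_i}{}$ in one step, whereas you detour through $\tilde{a}_i(u_i,u_i)$ via Cauchy--Schwarz in the $\tilde{a}_i$-inner product and then convert back with boundedness --- both give the stated constant $c_\Hopt$ up to absorbed norm-equivalence factors.
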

\begin{proof}
  Since $u_i$ is the harmonic extension of $\varphi$, it satisfies
  (\ref{eq:harmonicsat}) (with $f=0$).  Let $v =
  \Zopt_i(\varphi)$. Then by definition of
  $\tilde{a}_{i\GAM}(\cdot,\cdot)$ we have
  \begin{equation*}
    \tilde{a}_i(u_i,\Zopt_i(\varphi) ) = 
    \dotS{ \mu\, \Zopt_i(\varphi) - \PDif{\Zopt_i(\varphi)}{\NOR_i}}{ \varphi }_{\GAM}.
  \end{equation*}
  Note that $\Zopt_i(\varphi)|_{\GAM} = \varphi$. We can bound the
  right-hand side from below, therefore
  \begin{equation*}
    \begin{array}{rcll}
      \tilde{a}_i(u_i,\Zopt_i(\varphi) ) &\geq& \mu \norm{\varphi}_{\GAM}^{2} -
      \norm{ \PDif{\Zopt_i(\varphi)}{\NOR_i} }_{\GAM} \,\norm{ \varphi }_{\GAM}
      &
      \\
      &\geq&
      \mu \norm{\varphi}_{\GAM}^{2} -
      \frac{c}{\sqrt{h}}\norm{ \nabla {\Zopt_i(\varphi)} }_{K_\GAM}  
      \,\norm{ \varphi }_{ \GAM }
      &
      \quad \textrm{by ineq.~(\ref{eq:warburton})}
      \\		
      &\geq&
      \mu \norm{\varphi}_{\GAM}^{2} -
      \frac{c'}{h} \, \norm{ \varphi }_{ \GAM }^{2}
      & \quad \textrm{by ineq.~(\ref{eq:zoptineq})}
      \\	
      &=&
      \mu \left( 1 - \frac{c'}{\alpha} \right) \norm{ \varphi }_{ \GAM }^{2},
    \end{array}
  \end{equation*}
  which is positive if $\alpha>0$ and sufficiently large. By
  continuity of $\tilde{a}_{i}(\cdot,\cdot)$ we have
  \begin{equation*}
    \mu \left( 1 - \frac{c'}{\alpha} \right) \norm{ \varphi }_{ \GAM }^{2} 
    \leq \overline{C} \, \DGnorm{u_i}{} \cdot 
    \DGnorm{ \Zopt_i(\varphi)}{}.
  \end{equation*}
  Note that we are able to use $\DGnorm{ \cdot }{}$ instead of
  $\DGnorm{ \cdot }{,\ast}$ since we work with discrete spaces.  An
  application of Lemma \ref{lemma:Zoptphi} completes the proof with
  $c_{\Hopt} = ( 1 - \frac{c'}{\alpha} )^2 \cdot \frac{1}{C_\theta
    \overline{C}^2} < 1$.  \quad
\end{proof}

The upper bound in (\ref{eq:harmIneq}) can be obtained much easier
using coercivity of the $\tilde{a}_i(\cdot,\cdot)$.
\begin{lemma}
Let $\varphi \in \Lambda_h$ and $u_i := \Hopt_i(\varphi)$ be its harmonic
extension into $\OM_i$. Then we have
\begin{equation*}
  \DGnorm{ u_i }{}^{2} \leq {C}_{\Hopt} \cdot \mu \norm{ \varphi }_{\GAM}^{2},
\end{equation*}
where $C_{\Hopt} = \left( 1 + \frac{C}{\sqrt{\alpha} } \right)^{2}
\cdot \frac{1}{\underline{c}^2} > 1$.
\end{lemma}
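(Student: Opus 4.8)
The plan is to exploit the variational characterization \eqref{eq:BtoA}, namely $\phiB^\top \tB_i \phiB = \tilde{a}_i(u_i, u_i)$ with $u_i := \Hopt_i(\varphi)$, together with the coercivity of $\tilde{a}_i(\cdot,\cdot)$ and the energy-minimizing property of the discrete harmonic extension. First I would recall that since $u_i$ solves \eqref{eq:harmonicsat} with $f=0$, it is the minimizer of $\tilde{a}_i(\cdot,\cdot)$ over all functions in $V_{h,i}$ sharing the same ``generator'' $\varphi$; in particular, $\tilde{a}_i(u_i,u_i) \leq \tilde{a}_i(w,w)$ for any $w \in V_{h,i}$ whose associated interface data is $\varphi$. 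The natural competitor is $w = \Zopt_i(\varphi)$, the extension by zero, which has precisely $\varphi$ as its interface trace.

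The key steps, in order, are: (1) invoke \eqref{eq:BtoA} to write $\DGnorm{u_i}{}^2 \leq \underline{c}^{-1}\, \tilde{a}_i(u_i,u_i)$ using the coercivity lower bound $\underline{c}\,\DGnorm{u_i}{}^2 \leq \tilde{a}_i(u_i,u_i)$; (2) use the Galerkin-optimality of the harmonic extension to bound $\tilde{a}_i(u_i,u_i) \leq \tilde{a}_i(\Zopt_i(\varphi), \Zopt_i(\varphi))$; (3) apply the continuity bound $\tilde{a}_i(\Zopt_i(\varphi),\Zopt_i(\varphi)) \leq \overline{C}\, \DGnorm{\Zopt_i(\varphi)}{,\ast}^2$ and then replace $\DGnorm{\cdot}{,\ast}$ by $\DGnorm{\cdot}{}$ (legitimate since we work in the discrete space, up to the constant $C^2$ from the first Proposition); (4) invoke Lemma~\ref{lemma:Zoptphi} to get $\DGnorm{\Zopt_i(\varphi)}{}^2 \leq \mu\, C_\theta \norm{\varphi}_\GAM^2$. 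Combining these gives $\DGnorm{u_i}{}^2 \leq \underline{c}^{-1} \overline{C}\, C^2 C_\theta\, \mu \norm{\varphi}_\GAM^2$, which is of the claimed form $C_\Hopt \cdot \mu \norm{\varphi}_\GAM^2$ with $C_\Hopt = (1 + C/\sqrt{\alpha})^2 / \underline{c}^2$ once one tracks that $\overline{C} = 1 + C_3/\alpha$ and $C_\theta$ are both absorbed into a constant of the stated shape; indeed since $C_\Hopt > 1$ and only the scaling in $\mu$ and independence of $h$ matter, the precise algebraic form of the constant is not essential.

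The main obstacle — though a mild one — is justifying step (2): one must argue carefully that $\Zopt_i(\varphi)$ and $u_i = \Hopt_i(\varphi)$ have the same generator so that the energy-minimization / Galerkin-orthogonality argument applies. Concretely, since $u_i$ satisfies $\tilde{a}_i(u_i, v_i) + \tilde{a}_{i\GAM}(v_i,\varphi) = 0$ for all $v_i \in V_{h,i}$, writing $\Zopt_i(\varphi) = u_i + w$ with $w \in V_{h,i}$ and using the same identity one obtains $\tilde{a}_i(\Zopt_i(\varphi),\Zopt_i(\varphi)) = \tilde{a}_i(u_i,u_i) + \tilde{a}_i(w,w) + 2\tilde{a}_i(u_i,w)$, and the cross term is $-2\tilde{a}_{i\GAM}(w,\varphi)$; one then needs that $w$ has zero interface data, which holds because both $u_i$ and $\Zopt_i(\varphi)$ restrict to $\varphi$ on $\GAM$ in the discrete (nodal) sense, so $\tilde{a}_{i\GAM}(w,\varphi) = 0$ and the cross term vanishes. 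Hence $\tilde{a}_i(\Zopt_i(\varphi),\Zopt_i(\varphi)) \geq \tilde{a}_i(u_i,u_i)$ by coercivity of $\tilde{a}_i$ on $V_{h,i}$. Everything else is a direct chaining of the already-proven Propositions and Lemma~\ref{lemma:Zoptphi}.
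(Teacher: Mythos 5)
Your steps (1), (3) and (4) are unobjectionable, but the linchpin, step (2), has a genuine gap. The inequality $\tilde{a}_i(u_i,u_i) \leq \tilde{a}_i(\Zopt_i(\varphi),\Zopt_i(\varphi))$ is not an instance of energy minimization here, because the Dirichlet data is imposed only weakly: $u_i=\Hopt_i(\varphi)$ is the \emph{unconstrained} minimizer of $w \mapsto \tfrac12\tilde{a}_i(w,w)+\tilde{a}_{i\GAM}(w,\varphi)$ over $V_{h,i}$, not the minimizer of $w\mapsto\tilde{a}_i(w,w)$ among functions sharing the trace $\varphi$. Your justification rests on two claims that fail. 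First, the claim that $u_i$ restricts to $\varphi$ on $\GAM$ in the nodal sense is false; the paper remarks right after (\ref{eq:harmonicsat}) that the boundary condition on $\GAM$ is imposed weakly, so $u_i|_\GAM \neq \lambda_h$ in the strong sense, and hence $w:=\Zopt_i(\varphi)-u_i$ does not have zero interface data. Second, even if $w|_\GAM$ did vanish, $\tilde{a}_{i\GAM}(w,\varphi)=\dotS{\PDif{w}{\NOR_i}-\mu w}{\varphi}_\GAM$ still contains the normal-derivative term $\dotS{\PDif{w}{\NOR_i}}{\varphi}_\GAM$, which does not vanish for a piecewise polynomial with zero boundary trace. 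Consequently the cross term $2\tilde{a}_i(u_i,w)=-2\tilde{a}_{i\GAM}(w,\varphi)$ has no sign and cannot be discarded, so the comparison with the competitor $\Zopt_i(\varphi)$ is unjustified as written (and, even if repaired, would only deliver a constant of a different shape than the one stated).

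The paper's proof bypasses any competitor: test (\ref{eq:harmonicsat}) (with $f=0$) with $v_i=u_i$ to get $\tilde{a}_i(u_i,u_i)=-\tilde{a}_{i\GAM}(u_i,\varphi)=\dotS{\mu u_i-\PDif{u_i}{\NOR_i}}{\varphi}_\GAM$, then apply Cauchy--Schwarz and the trace inequality (\ref{eq:warburton}), absorbing $\sqrt{\mu}\,\norm{u_i}_\GAM$ and $h^{-1/2}\norm{\nabla u_i}_{\Ti{i}}$ into $\DGnorm{u_i}{}$ (using $\mu=\alpha h^{-1}$), and finally divide by $\DGnorm{u_i}{}$; combined with coercivity this yields exactly $C_\Hopt=\big(1+\tfrac{C}{\sqrt{\alpha}}\big)^2\underline{c}^{-2}$. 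Note that the extension by zero \emph{is} the right tool for the lower bound (Lemma \ref{lemma:trinvphi}), but there it enters as a test function in (\ref{eq:harmonicsat}), not as an energy competitor; to salvage your route you would have to estimate the cross term $\tilde{a}_{i\GAM}(\Zopt_i(\varphi)-u_i,\varphi)$ explicitly, which essentially reduces to the same trace-inequality estimates as the direct proof.
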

\begin{proof}
  Since $u_i$ is the harmonic extension of $\varphi$, it satisfies
  (\ref{eq:harmonicsat}) (with $f=0$).  Using the fact that
  $\tilde{a}_i(\cdot,\cdot)$ is coercive we have
  \begin{equation*}
    \begin{array}{rcl}
      \underline{c} \DGnorm{ u_i }{}^{2} \leq \tilde{a}_i(u_i,u_i) &=& 
      - \tilde{a}_{i\GAM}(u_i,\varphi)
      \\
      &=& \dotS{\mu u_i - \PDif{u_i}{\NOR_i} }{ \varphi }_{\GAM}
      \\
      &\leq& \mu \norm{ u_i }_{\GAM} \norm{ \varphi }_{\GAM} + 
      \norm{ \PDif{u_i}{\NOR_i} }_{\GAM} \norm{ \varphi }_{\GAM}
      \\
      &\leq& \mu \norm{ u_i }_{\GAM} \norm{ \varphi }_{\GAM} + 
      \frac{C}{\sqrt{h}} \norm{ \nabla u_i }_{\Ti{i}} \norm{ \varphi }_{\GAM}
      \\
      &\leq& \mu^{\half} \left( 1 + \frac{C}{\sqrt{\alpha} } \right) 
      \DGnorm{ u_i }{} \cdot \norm{ \varphi }_{\GAM},
    \end{array}
  \end{equation*}
  which completes the proof with $C_{\Hopt} := \left( 1 +
  \frac{C}{\sqrt{\alpha} } \right)^{2} \cdot \frac{1}{\underline{c}^2}
  > 1 $.  \quad
\end{proof}

We see that $C_\Hopt > 1$, which does not provide a sharp estimate for
the maximum eigenvalue of $\tB_i$. We now show how to obtain a sharp
estimate for the maximum eigenvalue of the $\tB_i$. Recall that the
global matrix $\tA$ is s.p.d.~and the positive definiteness is proved
by using for each subdomain $\half \tA_\GAM$ in
(\ref{eq:coercAtilde}). Therefore we consider the
s.p.d.~matrix
\begin{equation*}
  \hat{A} := \MATT{\tA_i}{\tA_{i\GAM}}{\tA_{\GAM i}}{\half \tA_\GAM}.
\end{equation*}
To show positive-definiteness, let $\wB:=(\uB_i, \phiB)^\top$ and
observe
\begin{equation} \label{eq:hatAspd}
  \wB^\top \hat{A} \wB = \tilde{a}_i(u_i,u_i) + 2
  \tilde{a}_{i\GAM}(u_i,\varphi) + \half
  \tilde{a}_\GAM(\varphi,\varphi) \geq c \Bnorm{(u_i,\varphi)}{,i}^2,
\end{equation}
for all $u_i \in V_{h,i}$ and $\varphi \in \Lambda_h$. Now let $u_i =
\Hopt_i(\varphi)$, then by a simple manipulation we have $ \phiB^\top
\big( \half \tA_\GAM - \tB_i \big) \phiB = \wB^\top \hat{A}
\wB$. Combining with (\ref{eq:hatAspd}) and recalling that $
\phiB^\top \tA_\GAM \phiB = 2 \mu \norm{\varphi}^2_\GAM $ we obtain
\begin{equation} \label{eq:Bsharp}
  \mu \norm{\varphi}_\GAM^2 - c \Bnorm{(\Hopt_i(\varphi),\varphi)}{,i}^2 \geq
  \phiB^\top \tB_i \phiB.
\end{equation}
This gives a sharp estimate for the maximum eigenvalue of $\tB_i$ if we
can bound the second term from below which is stated in the following
lemma.
\begin{lemma} \label{lemma:HDGlowerforB}
Let $\varphi \in \Lambda_h$ and $u_i \in V_{h,i}$ for $i=1,2$. Let
$H_i$ be the diameter of the subdomain. Then we have
\begin{equation*}
  \frac{c}{H_i} \norm{ \varphi }_\GAM^2 \leq 
  \Bnorm{(u_i,\varphi)}{,i}^{2}.
\end{equation*}
\end{lemma}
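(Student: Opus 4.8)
The goal is to bound $\norm{\varphi}_\GAM^2$ from above in terms of $\Bnorm{(u_i,\varphi)}{,i}^2$, uniformly in $h$ (with the correct $H_i$ scaling), for \emph{arbitrary} $u_i \in V_{h,i}$ — not just harmonic extensions. Since the triangle inequality gives $\norm{\varphi}_\GAM \leq \norm{\varphi - u_i}_\GAM + \norm{u_i}_\GAM$, and the first term is directly controlled by $\mu\norm{u_i-\varphi}_\GAM^2 \leq \Bnorm{(u_i,\varphi)}{,i}^2$ (note $\mu = \alpha h_e^{-1} \geq \alpha H_i^{-1}$, so even $\tfrac{c}{H_i}\norm{u_i - \varphi}_\GAM^2 \leq \Bnorm{(u_i,\varphi)}{,i}^2$), the real work is to bound $\norm{u_i}_\GAM$, the trace of $u_i$ on the interface portion of $\partial\OM_i$.

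The plan is to apply the discrete trace inequality of Lemma~\ref{lemma:karakashian} to $u_i$ on the subdomain $D=\OM_i$ with $H_D=H_i$. Taking $\GAM \subset \partial\OM_i$ and using $\norm{u_i}_\GAM^2 \leq \norm{u_i}_{\partial\OM_i}^2$, we get
\begin{equation*}
  \norm{u_i}_\GAM^2 \leq c\Big[ H_i^{-1}\norm{u_i}_{\Ti{i}}^2 + H_i\big(\norm{\nabla u_i}_{\Ti{i}}^2 + h^{-1}\norm{\jump{u_i}}_{\EPS_i\setminus\partial\OM_i}^2\big)\Big].
\end{equation*}
Multiplying by $\tfrac{\mu}{H_i}$ and using $\mu = \alpha h^{-1}$ and $\eta \geq 0$, one sees each term on the right is $\leq$ a constant (depending on $\alpha$) times one of the ingredients of $\Bnorm{(u_i,\varphi)}{,i}^2$: the first gives $\tfrac{c}{H_i^2}\cdot(\mu\norm{u_i}_{\Ti{i}}^2)$, which needs a Poincaré-type control, while the second and third give $\tfrac{\mu}{H_i}\cdot c\,H_i\,(\cdots) = c\alpha h^{-1}(\norm{\nabla u_i}_{\Ti{i}}^2 + h^{-1}\norm{\jump{u_i}}_{\EPS_i\setminus\GAM}^2)$ — this has an extra $h^{-1}$, which is problematic. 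The resolution is that Lemma~\ref{lemma:karakashian} is not the right scaling to hit directly; instead I would divide through so as to extract $\tfrac{c}{H_i}\norm{u_i}_\GAM^2$ on the left. Concretely: from the trace inequality, $\tfrac{1}{H_i}\norm{u_i}_\GAM^2 \leq c[H_i^{-2}\norm{u_i}_{\Ti{i}}^2 + \norm{\nabla u_i}_{\Ti{i}}^2 + h^{-1}\norm{\jump{u_i}}_{\EPS_i\setminus\GAM}^2]$. The middle and last terms are $\leq \Bnorm{(u_i,\varphi)}{,i}^2$ (indeed $h^{-1} = \mu/\alpha$, so $h^{-1}\norm{\jump{u_i}}^2 = \alpha^{-1}\mu\norm{\jump{u_i}}^2$). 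For the first term apply the discrete Poincaré inequality (Lemma~\ref{lemma:poincare}) with $D=\OM_i$ and $\nu=\GAM$: $\norm{u_i}_{\Ti{i}}^2 \leq cH_i^2[\norm{\nabla u_i}_{\Ti{i}}^2 + h^{-1}\norm{\jump{u_i}}_{\EPS_i\setminus\GAM}^2 + h^{-1}\norm{u_i}_\GAM^2]$; dividing by $H_i^2$ yields terms already bounded by $\Bnorm{(u_i,\varphi)}{,i}^2$ plus $h^{-1}\norm{u_i}_\GAM^2$. But $h^{-1}\norm{u_i}_\GAM^2 \leq 2h^{-1}\norm{u_i-\varphi}_\GAM^2 + 2h^{-1}\norm{\varphi}_\GAM^2 = 2\alpha^{-1}\mu\norm{u_i-\varphi}_\GAM^2 + 2\alpha^{-1}\mu\norm{\varphi}_\GAM^2$, and the first is $\leq \Bnorm{(u_i,\varphi)}{,i}^2$ while the second is again a constant times $\mu\norm{\varphi}_\GAM^2$.

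Combining, I obtain $\tfrac{c}{H_i}\norm{u_i}_\GAM^2 \leq C\,\Bnorm{(u_i,\varphi)}{,i}^2 + C\mu\norm{\varphi}_\GAM^2$, and then with the triangle inequality $\norm{\varphi}_\GAM^2 \leq 2\norm{u_i-\varphi}_\GAM^2 + 2\norm{u_i}_\GAM^2$, so $\tfrac{c}{H_i}\norm{\varphi}_\GAM^2 \leq C\,\Bnorm{(u_i,\varphi)}{,i}^2 + C\,\tfrac{\mu}{\alpha}\norm{\varphi}_\GAM^2$ — but this has $\norm{\varphi}_\GAM^2$ on both sides with an $h$-dependent coefficient $\mu/H_i \sim \alpha/(hH_i)$ on the right, which is \emph{larger} than $1/H_i$, so I cannot absorb it. This indicates the clean route is actually the first one: bound $\norm{u_i}_\GAM$ by $\norm{u_i - \varphi}_\GAM + \norm{\varphi}_\GAM$ is backwards; rather I should never reintroduce $\varphi$ via $\norm{u_i}_\GAM$. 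The correct bookkeeping is to keep $\norm{u_i-\varphi}_\GAM^2$ as the only interface term and bound $\norm{\varphi}_\GAM^2 \le 2\norm{u_i-\varphi}_\GAM^2 + 2\norm{u_i}_\GAM^2$ where $\norm{u_i}_\GAM^2$ is handled \emph{purely} by trace + Poincaré in terms of $\norm{\nabla u_i}_{\Ti{i}}^2$, $\norm{\jump{u_i}}_{\EPS_i\setminus\GAM}^2$, and (crucially, taking $\nu=\GAM$ with the $u_i$ trace there re-expressed through $u_i-\varphi$ plus the quantity we are bounding, then absorbed since $\alpha$ is large). The main obstacle, and the step requiring care, is exactly this absorption: making sure that when the discrete Poincaré inequality reintroduces $h^{-1}\norm{u_i}_\GAM^2$ (or $h^{-1}\norm{\varphi}_\GAM^2$), the resulting coefficient, after the $H_i^{-2}$ and trace-constant factors, is a term of the form $\tfrac{c}{H_i}\cdot(\text{const}/\alpha)\norm{\varphi}_\GAM^2$ that can be moved to the left-hand side because $\alpha$ is chosen large — or alternatively, that all reappearances of interface norms come with the factor $\mu\norm{u_i-\varphi}_\GAM^2$ which is genuinely in $\Bnorm{\cdot}{,i}^2$. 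I would organize the proof to bound $\norm{u_i}_\GAM$ only through volume terms and $\norm{u_i-\varphi}_\GAM$, never through $\norm{\varphi}_\GAM$ itself, and close the argument with the single triangle inequality $\norm{\varphi}_\GAM \le \norm{u_i-\varphi}_\GAM + \norm{u_i}_\GAM$ at the very end.
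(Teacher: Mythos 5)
Your opening steps coincide with the paper's: split $\norm{\varphi}_\GAM^2$ into $\norm{u_i-\varphi}_\GAM^2$ (controlled by $\mu\norm{u_i-\varphi}_\GAM^2\le\Bnorm{(u_i,\varphi)}{,i}^2$, using $h\le H_i$) plus $\norm{u_i}_\GAM^2$, then the trace inequality of Lemma \ref{lemma:karakashian} on $\OM_i$ followed by the discrete Poincar\'e inequality for the volume term. But your argument is not closed, and the gap is precisely the one you run into: you apply Lemma \ref{lemma:poincare} with $\nu=\GAM$, which reintroduces $h^{-1}\norm{u_i}_\GAM^2$ and hence, after writing $u_i=(u_i-\varphi)+\varphi$, a term $c\,h^{-1}\norm{\varphi}_\GAM^2$ on the right. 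This term cannot be absorbed into $\frac{c}{H_i}\norm{\varphi}_\GAM^2$ for any choice of $\alpha$: the factor $h^{-1}$ comes from the trace and Poincar\'e constants, not from $\mu$, so rewriting it as $\alpha^{-1}\mu\norm{\varphi}_\GAM^2$ does not make it comparable to $H_i^{-1}\norm{\varphi}_\GAM^2$ --- it exceeds it by a factor of order $H_i/h\to\infty$. Your fallback plan (bound $\norm{u_i}_\GAM$ only through volume terms and $\norm{u_i-\varphi}_\GAM$, never through $\norm{\varphi}_\GAM$) is the right goal but is unattainable with $\nu=\GAM$: Poincar\'e then necessarily feeds the interface trace of $u_i$ back into the estimate with an $h^{-1}$ weight, and expanding it through $u_i-\varphi$ always leaves either the un-absorbable $h^{-1}\norm{\varphi}_\GAM^2$ remainder or a self-term $cH_ih^{-1}\norm{u_i}_\GAM^2$ whose coefficient is $\gg 1$.

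The missing idea is the choice of $\nu$. The paper takes $\nu=\partial\OM_i\setminus\GAM$, the part of $\partial\OM_i$ lying on the outer boundary $\partial\OM$. On boundary edges $\jump{u_i}=u_i\,\NOR$, so $h^{-1}\norm{u_i}_\nu^2$ is (up to a factor $\alpha^{-1}$) already contained in $\mu\norm{\jump{u_i}}_{\EPS_i\setminus\GAM}^2$, i.e.\ in $\Bnorm{(u_i,\varphi)}{,i}^2$. With this choice no interface norm ever reappears: trace plus Poincar\'e give $\norm{u_i}_\GAM^2\le c_1H_i\big(\norm{\nabla u_i}_{\Ti{i}}^2+h^{-1}\norm{\jump{u_i}}_{\EPS_i\setminus\GAM}^2\big)$, and together with $H_ih^{-1}\norm{u_i-\varphi}_\GAM^2=H_i\alpha^{-1}\mu\norm{u_i-\varphi}_\GAM^2$ this yields $\norm{\varphi}_\GAM^2\le c_2H_i\Bnorm{(u_i,\varphi)}{,i}^2$ directly, with no absorption step and no extra largeness requirement on $\alpha$. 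So your skeleton is the intended one, but the boundary set in the Poincar\'e inequality must be switched from $\GAM$ to $\partial\OM_i\setminus\GAM$ for the proof to go through.
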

\begin{proof}
We first invoke triangle inequality and then Young's inequality
\begin{equation*}
   \norm{ \varphi }_\GAM^2 \leq 
   \norm{ u_i - \varphi }_\GAM^2 + \norm{ u_i }_\GAM^2 \leq 
  {H_i} h^{-1} \norm{ u_i - \varphi }_\GAM^2 +  \norm{ u_i }_\GAM^2, 
\end{equation*}
where the last inequality is due to the fact that $h \leq {H_i}$. Now
for the second term on the right-hand side we apply the trace
inequality from Lemma \ref{lemma:karakashian}, and subsequently the
Poincar\'e inequality from Lemma \ref{lemma:poincare} with $\nu =
\partial \OM_i \setminus \GAM $. We obtain
\begin{equation*}
\begin{array}{rcl}
  \norm{ \varphi }_\GAM^2 &\leq&
  {H_i} h^{-1} \norm{ u_i - \varphi }_\GAM^2 +
  c_1 H_i \big(
  \norm{ \nabla u_i }_{\OM_i}^2 + h^{-1} \norm{ \jump{u_i} }_{\EPS_i \setminus
    \GAM}^2
  \big)
  \\
  &\leq& c_2 H_i \Bnorm{(u_i,\varphi)}{,i}^2,
\end{array}
\end{equation*}
which completes the proof. \quad
\end{proof} 

We are now in the position to prove the estimate for the eigenvalues
of $\tB_i$.
\begin{lemma} \label{lemma:Beigen}
There exists $\alpha>0$, sufficiently large, such that 
\begin{equation*}
  c_B \mu \norm{\varphi}_\GAM^2 \leq \phiB^\top \tB_i \phiB \leq \Big( 1
  - C_B \frac{h}{H \alpha} \Big) \mu \norm{\varphi}_\GAM^2,
  \quad \forall \varphi \in \Lambda_h,
\end{equation*}
where $0<c_B<1$. Therefore $\tB_i$ is s.p.d.~Moreover $\tA_\GAM - 2
\tB_i$ is s.p.d.
\end{lemma}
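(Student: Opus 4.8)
The plan is to prove the three claims in order: first the lower bound, then the upper bound, then the two s.p.d.\ consequences. The lower bound is immediate from earlier results: by \eqref{eq:BtoA} we have $\phiB^\top \tB_i \phiB = \tilde{a}_i(u_i,u_i)$ with $u_i = \Hopt_i(\varphi)$, and coercivity of $\tilde{a}_i(\cdot,\cdot)$ together with Lemma~\ref{lemma:trinvphi} gives $\phiB^\top \tB_i \phiB \ge \underline{c}\,\DGnorm{u_i}{}^2 \ge \underline{c}\,c_\Hopt\,\mu\norm{\varphi}_\GAM^2$, so we may take $c_B := \underline{c}\,c_\Hopt \in (0,1)$.

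For the upper bound I would start from the sharp inequality \eqref{eq:Bsharp}, namely $\phiB^\top \tB_i \phiB \le \mu\norm{\varphi}_\GAM^2 - c\,\Bnorm{(\Hopt_i(\varphi),\varphi)}{,i}^2$, and then bound the subtracted HDG-norm term from below using Lemma~\ref{lemma:HDGlowerforB} with $u_i = \Hopt_i(\varphi)$ and $H_i \le H$. This yields $c\,\Bnorm{(\Hopt_i(\varphi),\varphi)}{,i}^2 \ge c'\,H^{-1}\norm{\varphi}_\GAM^2$, and rewriting $H^{-1} = (\alpha/h)\cdot (h/(H\alpha)) = \mu\cdot (h/(H\alpha))$ produces exactly the factor $(1 - C_B \tfrac{h}{H\alpha})\mu\norm{\varphi}_\GAM^2$ with $C_B := c'/\alpha \cdot \alpha = c'$ — more precisely one absorbs the constant so that $C_B$ is independent of $h$, $H$, $\alpha$. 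One must check that $\alpha$ can be chosen large enough that $1 - C_B\tfrac{h}{H\alpha} > 0$; since $h \le H$, this factor is at least $1 - C_B/\alpha$, which is positive for $\alpha$ large, consistent with the paper's standing assumption on $\alpha$.

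The s.p.d.\ conclusions then follow formally. Since $\tB_i$ is symmetric (it is $\tA_{\GAM i}\tA_i^{-1}\tA_{i\GAM}$ with $\tA_{\GAM i} = \tA_{i\GAM}^\top$ and $\tA_i$ s.p.d.) and the lower bound gives $\phiB^\top\tB_i\phiB \ge c_B\mu\norm{\varphi}_\GAM^2 > 0$ for all $\varphi \neq 0$, $\tB_i$ is s.p.d. For $\tA_\GAM - 2\tB_i$: recall $\phiB^\top\tA_\GAM\phiB = 2\mu\norm{\varphi}_\GAM^2$, so $\phiB^\top(\tA_\GAM - 2\tB_i)\phiB = 2\mu\norm{\varphi}_\GAM^2 - 2\phiB^\top\tB_i\phiB \ge 2\mu\norm{\varphi}_\GAM^2 - 2(1 - C_B\tfrac{h}{H\alpha})\mu\norm{\varphi}_\GAM^2 = 2C_B\tfrac{h}{H\alpha}\mu\norm{\varphi}_\GAM^2 > 0$, which is positive for $\varphi \neq 0$; symmetry of $\tA_\GAM - 2\tB_i$ is clear.

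The main obstacle is the upper bound, specifically making the dependence on $H$ (not just $H_\OM$) come out correctly: the bookkeeping in Lemma~\ref{lemma:HDGlowerforB} produces an $H_i^{-1}$ factor, and one must be careful that the trace/Poincar\'e constants there are genuinely independent of $h$ and the subdomain geometry (beyond star-shapedness) so that $C_B$ has the claimed uniformity. Everything else is assembling estimates already proved in the excerpt.
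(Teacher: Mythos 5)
Your proposal is correct and follows essentially the same route as the paper: the lower bound via \eqref{eq:BtoA}, coercivity of $\tilde{a}_i(\cdot,\cdot)$ and Lemma~\ref{lemma:trinvphi} with $c_B=\underline{c}\,c_\Hopt$, and the upper bound via \eqref{eq:Bsharp} combined with Lemma~\ref{lemma:HDGlowerforB} (using $H_i\le H$ and $\mu=\alpha h^{-1}$), from which the s.p.d.\ statements, including that of $\tA_\GAM-2\tB_i$, follow exactly as you argue.
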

\begin{proof}
To show the proof of the lower bound we use (\ref{eq:BtoA}),
coercivity of $\tilde{a}_i(\cdot,\cdot)$ and Lemma
\ref{lemma:trinvphi} to obtain
\begin{equation*}
  \phiB^\top \tB_i \phiB = \tilde{a}_i(u_i,u_i) \geq \underline{c}
  \DGnorm{u_i}{}^2 \geq \underline{c} \cdot c_\Hopt \cdot \mu \norm{\varphi}_\GAM^2.
\end{equation*}
This completes the lower bound by setting $c_B := \underline{c} \cdot
c_\Hopt < 1$. For the upper bound we use inequality (\ref{eq:Bsharp})
and Lemma \ref{lemma:HDGlowerforB} where we obtain
\begin{equation*}
  \phiB^\top \tB_i \phiB \leq \Big( 1 - \frac{c}{H} \frac{h}{\alpha}
  \Big) \mu \norm{\varphi}_\GAM^2.
\end{equation*}
Finally from inequality (\ref{eq:Bsharp}) we have that $\tA_\GAM - 2
\tB_i$ is s.p.d.
\end{proof}
\begin{remark}
This estimate shows that the condition number satisfies
\begin{equation*}
  \kappa(\tB_i) \leq c_B^{-1} \big( 1 - C_B \frac{h}{H \alpha } \big),
\end{equation*}
which implies that $\tB_i$ is {\normalfont scalable}. In other words if
we keep the ratio $h/H$ constant the condition number does not
change. Geometrically that is equivalent of scaling the subdomain and
the triangulation at the same rate which does not change the
{\normalfont entries} of the $\tB_i$ nor its {\normalfont
  size}. Therefore the condition number of $\tB_i$ is expected not to
change.
\end{remark}
\subsection{Eigenvalue estimate for $\tilde{S}_\GAM$}
Estimating eigenvalues of the Schur complement is similar to
estimating eigenvalues of $\tB_i$. To show the lower bound in estimate
(\ref{eq:Sest}), we need the following lemma.
\begin{lemma} \label{lemma:HDGlower}
Let $\varphi \in \Lambda_h$ and $u_i \in V_{h,i}$ for $i=1,2$. Let
$H_\OM$ be the diameter of the domain and $H$ be the maximum diameter of
the subdomains. Then we have
\begin{equation*}
  c \frac{H}{H_\OM^2} \norm{\varphi}_\GAM^2 \leq 
  \sum_{i=1}^2 \Bnorm{(u_i,\varphi)}{,i}^{2}.
\end{equation*}
\end{lemma}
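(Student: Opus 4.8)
The plan is to combine the two ingredients already developed for the single-subdomain bound in Lemma~\ref{lemma:HDGlowerforB}, namely the Karakashian trace inequality (Lemma~\ref{lemma:karakashian}) and Brenner's Poincar\'e inequality (Lemma~\ref{lemma:poincare}), but now applied on the \emph{mono-domain} $\OM$ rather than on a single subdomain. The reason this is not a trivial corollary of Lemma~\ref{lemma:HDGlowerforB} is that there the subset $\nu$ on which the Poincar\'e inequality controls the trace was $\partial\OM_i\setminus\GAM$, which is a physical piece of $\partial\OM$; here the relevant control set is $\partial\OM$ itself, where $u=0$ in $V_h$ only weakly, and moreover the broken function $v$ built from $u_1$ and $u_2$ has a jump across $\GAM$ which must be accounted for in the Poincar\'e estimate.

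The first step is to build the ``glued'' broken function $v\in\Hsp^1(\Th)$ by $v|_{\OM_i}:=u_i$. Its piecewise gradient on $\OM$ is $\norm{\nabla v}_{\OM}^2=\sum_i\norm{\nabla u_i}_{\OM_i}^2$, its interior-jump seminorm over $\EPS^0\setminus\GAM$ is $\sum_i\norm{\jump{u_i}}_{\EPS_i\setminus\GAM}^2$, and \emph{across} $\GAM$ the jump $\jump{v}$ on an edge $e\in\GAM$ is $u_1|_e-u_2|_e$ (up to orientation), which by the triangle inequality satisfies $\norm{\jump{v}}_\GAM\le\norm{u_1-\varphi}_\GAM+\norm{u_2-\varphi}_\GAM$, hence $h^{-1}\norm{\jump{v}}_\GAM^2\lesssim h^{-1}(\norm{u_1-\varphi}_\GAM^2+\norm{u_2-\varphi}_\GAM^2)=\mu\alpha^{-1}\sum_i\norm{u_i-\varphi}_\GAM^2$, which is controlled by $\sum_i\Bnorm{(u_i,\varphi)}{,i}^2$ (up to a constant absorbing $\alpha^{-1}\le$ const). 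The second step mirrors Lemma~\ref{lemma:HDGlowerforB}: write $\norm{\varphi}_\GAM^2\le 2\norm{u_i-\varphi}_\GAM^2+2\norm{u_i}_\GAM^2$ for, say, $i=1$, absorb the first term into $H h^{-1}\norm{u_1-\varphi}_\GAM^2\le$ const $\cdot\mu\norm{u_1-\varphi}_\GAM^2\cdot(Hh^{-1}/\mu)$ — actually cleaner is to keep $\norm{\varphi}_\GAM^2\le H h^{-1}\norm{u_1-\varphi}_\GAM^2+\norm{u_1}_\GAM^2$ as in the lemma — and then apply the trace inequality of Lemma~\ref{lemma:karakashian} on $D=\OM_1$ to $\norm{u_1}_\GAM^2\le\norm{u_1}_{\partial\OM_1}^2$, yielding a bound by $H_1^{-1}\norm{u_1}_{\OM_1}^2+H_1(\norm{\nabla u_1}_{\OM_1}^2+h^{-1}\norm{\jump{u_1}}_{\EPS_1\setminus\GAM}^2)$.

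The third and decisive step is to control the volume term $\norm{u_1}_{\OM_1}^2$: here I apply Lemma~\ref{lemma:poincare} \emph{on the mono-domain} $D=\OM$ to the glued function $v$, with $\nu:=\partial\OM$ (nonzero measure), giving $\norm{v}_{\OM}^2\le c H_\OM^2\big(\norm{\nabla v}_{\OM}^2+h^{-1}\norm{\jump{v}}_{\EPS\setminus\partial\OM}^2+h^{-1}\norm{v}_{\partial\OM}^2\big)$. The three terms on the right are, respectively, $\sum_i\norm{\nabla u_i}_{\OM_i}^2$; $h^{-1}(\sum_i\norm{\jump{u_i}}_{\EPS_i\setminus\GAM}^2+\norm{\jump{v}}_\GAM^2)$, which by the first step is $\lesssim\sum_i\Bnorm{(u_i,\varphi)}{,i}^2$; and $h^{-1}\norm{v}_{\partial\OM}^2$, which vanishes because $v\in V_h$ has zero boundary data on $\partial\OM$ (the $\Lsp^2$-norm of the trace of a $V_h$ function on $\partial\OM$ is exactly zero, since these degrees of freedom are set to zero in the discretization). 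Hence $\norm{u_1}_{\OM_1}^2\le\norm{v}_\OM^2\lesssim H_\OM^2\sum_i\Bnorm{(u_i,\varphi)}{,i}^2$. Substituting back, all four contributions to the bound on $\norm{\varphi}_\GAM^2$ are of the form (constant)$\cdot H_\OM^2 H^{-1}\cdot H^{?}\sum_i\Bnorm{(u_i,\varphi)}{,i}^2$; tracking the powers of $H$ and $H_\OM$ carefully (the $H_1^{-1}$ from the trace inequality combines with the $H_\OM^2$ from Poincar\'e and the $H_1\le H$ factors, and using $H_1\le H< H_\OM$), one arrives at $\norm{\varphi}_\GAM^2\le c\,\dfrac{H_\OM^2}{H}\sum_{i=1}^2\Bnorm{(u_i,\varphi)}{,i}^2$, which is the claim after dividing through.

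The main obstacle I anticipate is the bookkeeping of the scaling powers of $H$, $H_i$ and $H_\OM$: each of Lemmas~\ref{lemma:karakashian} and \ref{lemma:poincare} contributes its own $H_D$-factors, and one must be careful that the $h^{-1}$ coming from the trace-of-jump terms is exactly compensated by the $\mu=\alpha h^{-1}$ weights in $\Bnorm{\cdot}{,i}$ so that no stray power of $h$ survives. A secondary subtlety is justifying that the trace of a $V_h$ function on $\partial\OM$ contributes nothing — this relies on the specific way essential boundary conditions are encoded in $V_h$ (the boundary edge being in $\EPS^\partial$ with $\jump{q}=q\NOR$ and the corresponding penalty forcing the trace to be treated as data equal to zero); if instead one only has weak imposition, the term $h^{-1}\norm{v}_{\partial\OM}^2$ would need to be re-expressed via $\norm{\jump{v}}_{\EPS^\partial}^2$ and absorbed into $\sum_i\Bnorm{(u_i,\varphi)}{,i}^2$, which still works but changes the constant.
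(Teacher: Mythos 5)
Your route is essentially the paper's: the glued broken function, the triangle inequality, the Karakashian trace inequality (Lemma~\ref{lemma:karakashian}) on a subdomain, the jump across $\GAM$ bounded through $\norm{u_1-\varphi}_\GAM$ and $\norm{u_2-\varphi}_\GAM$, and Brenner's Poincar\'e inequality (Lemma~\ref{lemma:poincare}) applied on the mono-domain with $\nu=\partial\OM$. Two points in your main line need repair, and your own closing remarks essentially contain the fixes. First, the claim that $h^{-1}\norm{v}_{\partial\OM}^2$ is exactly zero is false in this setting: $V_h=\{v\in\Lsp^2(\OM):\,v|_K\in\PO^k(K)\}$ carries no strong boundary condition (the Dirichlet data is imposed only weakly through penalization), and the lemma is stated for \emph{arbitrary} $u_i\in V_{h,i}$, not for solutions of anything. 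The correct treatment — and the one the paper uses — is precisely your fallback: on boundary edges $\jump{q}=q\,\NOR$, so $\norm{u_i}_{\partial\OM\cap\partial\OM_i}^2\le\norm{\jump{u_i}}_{\EPS_i\setminus\GAM}^2$, and the boundary term coming out of the Poincar\'e inequality is absorbed as $h^{-1}\norm{v}_{\partial\OM}^2\le\alpha^{-1}\mu\sum_i\norm{\jump{u_i}}_{\EPS_i\setminus\GAM}^2\le\alpha^{-1}\sum_i\Bnorm{(u_i,\varphi)}{,i}^2$; this should be the main argument, not a contingency.

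Second, working with a single subdomain ($i=1$) produces the factor $H_1^{-1}$ from the trace inequality, and since $H_1$ may be strictly smaller than $H=\max(H_1,H_2)$ this only yields $\norm{\varphi}_\GAM^2\le C\,(H_\OM^2/H_1)\sum_i\Bnorm{(u_i,\varphi)}{,i}^2$, which is weaker than the claim. The repair is easy: either choose the subdomain of maximal diameter (legitimate, since your first step works for either $i$), or do what the paper does and sum the two subdomain inequalities, so that $H\le H_1+H_2$ supplies the correct factor without having to decide which subdomain is larger. With these two adjustments your scaling bookkeeping closes ($Hh^{-1}\norm{u_i-\varphi}_\GAM^2\le(H/\alpha)\Bnorm{(u_i,\varphi)}{,i}^2$ and $H\le H_\OM^2/H$ handle the remaining terms) and the argument reproduces the paper's proof.
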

\begin{proof}
First we invoke a triangle inequality 
\begin{equation*}
  H_i \norm{ \varphi }_\GAM^2 \leq 
  H_i \norm{ u_i - \varphi }_\GAM^2 + H_i \norm{ u_i }_\GAM^2 \leq 
  {H_i^2} h^{-1} \norm{ u_i - \varphi }_\GAM^2 + H_i \norm{ u_i }_\GAM^2, 
\end{equation*}
where the last inequality is due to the fact that $h \leq {H_i}$. Now for
the second term on the right-hand side, observe that using Lemma
\ref{lemma:karakashian} we have
\begin{equation*}
  c_i H_i \norm{ u_i }_{\GAM}^{2} \leq
  c_i H_i \norm{ u_i }_{\partial \OM_i}^{2} \leq 
  \norm{u_i}_{\OM_i}^{2} + H_i^{2} \big( \norm{ \nabla u_i }_{\OM_i}^{2} + 
  h^{-1} \norm{\jump{u_i}}_{\EPS_i \setminus \partial \OM_i}^{2} \big).
\end{equation*}
We sum over both subdomains and
invoke Lemma \ref{lemma:poincare} for the $\Lsp^2$-norm of $u$ over
$\OM$
\begin{equation*}
\begin{array}{rcl}
  c H \sum_{i=1}^2 \norm{u_i}_{\GAM}^2
  &\leq& \norm{ u }_\OM^2 + H^2 \sum_{i=1}^2 
  \big(
  \norm{ \nabla u_i }_{\OM_i}^{2} + h^{-1} \norm{\jump{u_i}}_{\EPS_i
    \setminus \partial \OM_i}^{2}
  \big)
  \\
  &\leq& C H_\OM^2 \big( 
  \norm{ \nabla u }_{\OM}^2 
  + h^{-1} \norm{ \jump{ u } }_{\EPS \setminus \partial \OM}^2
  + h^{-1} \norm{ { u } }_{ \partial \OM}^2 \big)
  \\
  && \quad
  +
  H^2 \sum_{i=1}^2 \big(
  \norm{ \nabla u_i }_{\OM_i}^{2} + h^{-1} \norm{\jump{u_i}}_{\EPS_i
    \setminus \partial \OM_i}^{2}
  \big).
\end{array}
\end{equation*}
Noting that $H \leq H_\OM$ and by definition of
$\Bnorm{(u_i,\varphi)}{,i}$ we obtain
\begin{equation*}
\begin{array}{rcl}
  c H \sum_{i=1}^2 \norm{u_i}_{\GAM}^2
  &\leq& H_\OM^{2}
  \sum_{i=1}^2 \big( 
  \norm{ \nabla u_i }_{\OM_i}^2
  + h^{-1} \norm{\jump{u_i}}_{\EPS_i
    \setminus \partial \OM_i}^{2}
  + h^{-1} \norm{{u_i}}_{ \partial \OM
    \cap \partial \OM_i}^{2}
  \big)
  \\
  && + H_\OM^{2} h^{-1} \norm{\jump{u}}_{\GAM}^{2}
  \\
  &\leq& H_\OM^{2}
  \sum_{i=1}^2 \big( 
  \norm{ \nabla u_i }_{\OM_i}^2
  + h^{-1} \norm{\jump{u_i}}_{\EPS_i
    \setminus \partial \OM_i}^{2}
  + h^{-1} \norm{{u_i}}_{ \partial \OM
    \cap \partial \OM_i}^{2}
  \big)
  \\
  && + H_\OM^{2} h^{-1} \big( 
  \norm{u_1 - \varphi }_{\GAM}^{2} 
  + \norm{u_2 - \varphi }_{\GAM}^{2}
  \big)
  \\
  &\leq& H_\OM^2 \sum_{i=1}^2 \Bnorm{(u_i,\varphi)}{,i}^2.
\end{array}
\end{equation*}
Substituting back into the first inequality completes the proof. \quad
\end{proof}
\begin{lemma} \label{lemma:shurpos}
There exists $\alpha>0$, sufficiently large, such that
  \begin{equation*}
    c \frac{H}{H_\OM^2} \norm{\varphi}_{\GAM}^{2} \leq \phiB^\top \tilde{S}_{\GAM} \phiB 
    \leq \frac{2 \alpha}{h} \norm{\varphi}_{\GAM}^2,
  \end{equation*}
Therefore $\tilde{S}_\GAM$ is s.p.d. Moreover $\tA_\GAM - \tB_i$ is s.p.d.
\end{lemma}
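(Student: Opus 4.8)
The plan is to establish the two-sided bound on $\phiB^\top \tilde{S}_\GAM \phiB$ by treating the lower and upper bounds separately, and then deduce the positive definiteness claims as immediate consequences. Recall from \eqref{eq:shur} that $\tilde{S}_\GAM = \tA_\GAM - \tB_1 - \tB_2$, and that by Proposition~\ref{prop:Bopt} (or directly by \eqref{eq:BtoA}) we have $\phiB^\top \tB_i \phiB = \tilde{a}_i(u_i,u_i)$ where $u_i := \Hopt_i(\varphi)$ is the harmonic extension. Also $\phiB^\top \tA_\GAM \phiB = 2\mu \Lnorm{\varphi}_\GAM^2$. Hence
\begin{equation*}
  \phiB^\top \tilde{S}_\GAM \phiB = 2\mu \Lnorm{\varphi}_\GAM^2 - \sum_{i=1}^2 \tilde{a}_i(u_i,u_i).
\end{equation*}
The strategy for the \emph{lower} bound is to recognize the right-hand side as precisely the quantity appearing in the coercivity computation \eqref{eq:coercAtilde}: writing $\wB := (\uB_1,\uB_2,\phiB)^\top$ and using that $u_i = \Hopt_i(\varphi)$, one has $\phiB^\top \tilde{S}_\GAM \phiB = \wB^\top \tA \wB$ (this is the analogue of the identity $\phiB^\top(\half\tA_\GAM - \tB_i)\phiB = \wB^\top\hat A\wB$ used above, now summed over both subdomains so the full $\tA_\GAM$ is recovered). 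Then the coercivity estimate established after \eqref{eq:coercAtilde} gives $\wB^\top \tA \wB \geq c \sum_{i=1}^2 \Bnorm{(u_i,\varphi)}{,i}^2$, and an application of Lemma~\ref{lemma:HDGlower} to the harmonic extensions $u_i = \Hopt_i(\varphi)$ yields $\sum_{i=1}^2 \Bnorm{(u_i,\varphi)}{,i}^2 \geq c\frac{H}{H_\OM^2}\Lnorm{\varphi}_\GAM^2$, which is exactly the claimed lower bound.

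For the \emph{upper} bound the approach is simpler: since each $\tB_i$ is s.p.d.\ by Lemma~\ref{lemma:Beigen}, we have $\phiB^\top \tB_i \phiB \geq 0$, so
\begin{equation*}
  \phiB^\top \tilde{S}_\GAM \phiB \leq \phiB^\top \tA_\GAM \phiB = 2\mu \Lnorm{\varphi}_\GAM^2 = \frac{2\alpha}{h}\Lnorm{\varphi}_\GAM^2,
\end{equation*}
using $\mu = \alpha h^{-1}$. (Alternatively one could invoke the second half of \eqref{eq:Sest}, but discarding the $\tB_i$ terms is cleaner and gives the stated constant.) Finally, the lower bound shows $\phiB^\top \tilde{S}_\GAM \phiB > 0$ for $\varphi \neq 0$, hence $\tilde{S}_\GAM$ is s.p.d.; and since $\tilde{S}_\GAM = \tA_\GAM - \tB_1 - \tB_2$ with $\tB_j$ s.p.d., we get $\phiB^\top(\tA_\GAM - \tB_i)\phiB = \phiB^\top \tilde{S}_\GAM \phiB + \phiB^\top \tB_j \phiB > 0$ for $j \neq i$, so $\tA_\GAM - \tB_i$ is s.p.d.\ as well.

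The main obstacle is the lower bound, specifically the bookkeeping to verify the identity $\phiB^\top \tilde{S}_\GAM \phiB = \wB^\top \tA \wB$ when $u_i = \Hopt_i(\varphi)$ — one must check that the cross terms $\tilde{a}_{i\GAM}$ combine correctly so that the quadratic form in $\wB$ really reduces to the Schur complement form in $\phiB$ alone, which is a straightforward but careful manipulation using $\tA_i \uB_i + \tA_{i\GAM}\phiB = 0$. The only other subtlety is that Lemma~\ref{lemma:HDGlower} is stated for arbitrary $u_i \in V_{h,i}$, so it applies in particular to the harmonic extensions; we just need $\alpha$ large enough that the coercivity constant $c$ in \eqref{eq:coercAtilde} is positive, which is part of the standing assumption on $\alpha$.
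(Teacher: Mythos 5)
Your proposal is correct and follows essentially the same route as the paper's proof: the lower bound via the identity $\phiB^\top \tilde{S}_\GAM \phiB = \wB^\top \tA \wB$ for $u_i = \Hopt_i(\varphi)$, coercivity of $\tilde{a}(\cdot,\cdot)$, and Lemma~\ref{lemma:HDGlower}; the upper bound by discarding the positive definite $\tB_i$ terms; and the s.p.d.\ statements exactly as in the paper. No gaps to report.
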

\begin{proof}
The symmetry is easy to check since $\tA_\GAM$ and $\tB_1$, $\tB_2$ are
symmetric.  For the upper bound in the estimate we recall that
$\tB_1$, $\tB_2$ are positive definite and hence
\begin{equation*}
\phiB^\top \tilde{S}_{\GAM} \phiB = \phiB^\top (\tA_\GAM - \sum_{i=1}^2 \tB_i )
\phiB \leq \phiB^\top \tA_{\GAM} \phiB = 2\mu \norm{\varphi}_\GAM^{2}.
\end{equation*}
Now let $u_i := \Hopt_i(\varphi)$ and $\vB := ( \uB_1, \uB_2, \phiB
)^\top$. A straightforward calculation shows that $ \phiB^\top
\tilde{S}_\GAM \phiB = \vB^\top \tA \vB$. Then the coercivity of the
bilinear form $\tilde{a}(\cdot,\cdot)$ and an application of Lemma
\ref{lemma:HDGlower} yields
\begin{equation*}
  \phiB^\top \tilde{S}_\GAM \phiB = \vB^\top \tA \vB 
  \equiv \tilde{a}( (u,\varphi), (u,\varphi) ) 
  \geq c \sum_{i=1}^2 \Bnorm{(u_i,\varphi)}{,i}^{2}
  \geq c \frac{H}{H_\OM^2} \norm{ \varphi }_\GAM^2.
\end{equation*}
For the final statement, observe that for all $\varphi \not = 0$ we have
\begin{equation*}
  \phiB^\top ( \tA_\GAM - \tB_i ) \phiB > \phiB^\top (\tA_\GAM - \sum_{j=1}^2 \tB_j) \phiB
  > 0,
\end{equation*}
since the $\{ \tB_i \}$ are positive definite. 
This completes the proof. \quad
\end{proof}
\begin{remark}
  Note that Lemma \ref{lemma:shurpos} provides an upper bound for the
  condition number,
  $
  \kappa(\tilde{S}_\GAM) \leq O(\frac{\alpha}{h})
  $.
  A similar result also holds for classical FEM,
  see \cite{brenner} and \cite[Lemma 4.11]{widlund}.
\end{remark}

\section{Schwarz methods and the Schur complement} \label{sec:schwarz}

In order to solve the Schur complement system we can devise a Schwarz
method to obtain $\lambda_h$. We will prove that a natural Schwarz
method for the Schur complement is equivalent to the block Jacobi
iteration in (\ref{eq:blockJacobi}), but it suffers from slow
convergence. Later we show how to obtain an optimized Schwarz method
for the Schur complement which converges much faster to the same fixed
point.

Let us relax the constraint that $\lambda_h$ is single-valued. Let
$\lambda_{h,1}, \lambda_{h,2} \in \Lambda_h $.  Assume $\lambda_{h,2}$
is known; that is we know $u_2 \in V_{h,2}$.  Then we can split the
Schur complement system (\ref{eq:shur}) and obtain an approximation
for $\lambda_{h,1}$ and consequently $u_{1}\in V_{h,1}$ from
\begin{equation*}
  (\tA_\GAM - \tB_1 ) \lB_1 = \tB_2 \lB_2 + \BO{g}_{\GAM}.
\end{equation*}
As a consequence of Lemma \ref{lemma:shurpos}, $(\tA_\GAM - \tB_1 )$
is invertible and we can obtain $\lambda_{h,1}$. This suggests an
iterative method to obtain $\lambda_h$. We will see that this produces
identical iterates as the block Jacobi method.

\begin{algorithm} \label{algo:addschwarz}
Let $\lambda_{h,1}^{(0)}, \lambda_{h,2}^{(0)} \in \Lambda_h$ be two
random initial guesses.  Then for $n=1,2,\hdots$ find
$\big\{ \lambda_{h,i}^{(n)} \big\}$ such that
\begin{equation} \label{eq:shurITE}
  \begin{array}{rcl}
    (\tA_\GAM^{} - \tB_1^{} ) \lB_1^{(n)} = \tB_2^{} \lB_2^{(n-1)} + \BO{g}_{\GAM}^{},
    \\
    (\tA_\GAM^{} - \tB_2^{} ) \lB_2^{(n)} = \tB_1^{} \lB_1^{(n-1)} + \BO{g}_{\GAM}^{}.	
  \end{array}
\end{equation}
\end{algorithm}
%
%
At convergence, we have $\tA_{\GAM}(\lB_1 - \lB_2)=0$ which implies
$ \lB_1 = \lB_2 = \tilde{S}_{\GAM}^{-1} \BO{g}_{\GAM}$.

The following result shows that the above method generates the same
iterates as the block Jacobi iteration (\ref{eq:blockJacobi}). By
linearity it suffices to consider the error equation, $f=0$, which
implies $\BO{g}_\GAM = 0$.
\begin{proposition}
Let $\lambda_{h,1}^{(0)}, \lambda_{h,2}^{(0)}$ be two random initial
guesses of Algorithm \ref{algo:addschwarz} and without loss of
generality suppose $f=0$. Set the initial guess of the block Jacobi
iteration (\ref{eq:blockJacobi}) to be $ u_i^{(0)} =
\Hopt_i^{}(\lambda_{h,i}^{(0)}) $. Then $ u_i^{(n)} =
\Hopt_i^{}(\lambda_{h,i}^{(n)}) $ for all $n>0$, i.e.~both methods
produce the same iterates.
\end{proposition}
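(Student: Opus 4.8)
The plan is to argue by induction on $n$, showing that the Schur-complement iteration in Algorithm \ref{algo:addschwarz} is precisely the block Jacobi iteration viewed through the discrete harmonic extension. The base case $n=0$ holds by the assumed initialization $u_i^{(0)} = \Hopt_i(\lambda_{h,i}^{(0)})$. For the inductive step I would assume $u_i^{(n-1)} = \Hopt_i(\lambda_{h,i}^{(n-1)})$ for $i=1,2$ and show that the block Jacobi update $A_i \uB_i^{(n)} = -A_{ij}\uB_j^{(n-1)}$ (recall $f=0$, so $\fB_i=0$) produces exactly $\uB_i^{(n)} = \Hopt_i(\lambda_{h,i}^{(n)})$, where $\lambda_{h,i}^{(n)}$ solves $(\tA_\GAM - \tB_i)\lB_i^{(n)} = \tB_j \lB_j^{(n-1)}$.

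The key is to translate between the primal partitioned system \eqref{eq:linsyspart} and the hybridized system \eqref{eq:DDshur}. Writing out the hybridized variational problem \eqref{eq:varIPH2} with the test function $\varphi$ supported on $\GAM$ gives the ``interface equation'' $\tA_{\GAM 1}\uB_1 + \tA_{\GAM 2}\uB_2 + \tA_\GAM \lB = 0$, while testing with $v_i$ gives $\tA_i \uB_i + \tA_{i\GAM}\lB = \fB_i$. From this I would derive the precise relationship between the primal block $A_{ij}$ and the hybridized quantities: eliminating $\lB$ from a \emph{local} pair of subdomains (treating the other subdomain's data as fixed) and comparing with the primal coupling. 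Concretely, from Definition \ref{HarmExtDef} and Proposition \ref{prop:Bopt}, if $u_i = \Hopt_i(\lambda_{h,i})$ then $\tA_i \uB_i = -\tA_{i\GAM}\lB_i$, and the coupling term in the primal form corresponds on the interface to the Robin-like trace $\dotS{\mu u_i - \partial_{\NOR_i} u_i}{\cdot}_\GAM = \phiB^\top \tB_i \lB_i$ (for appropriate test generators). The block Jacobi step $A_1 \uB_1^{(n)} = -A_{12}\uB_2^{(n-1)}$ should then, after applying the harmonic-extension identification $\uB_1^{(n)} = \Hopt_1(\lB_1^{(n)})$, reduce exactly to the first line of \eqref{eq:shurITE}: the term $A_1 \uB_1^{(n)}$ generates $(\tA_\GAM - \tB_1)\lB_1^{(n)}$ on the interface side (the ``$\tA_\GAM$'' coming from the $\half\tA_\GAM$ split counted from both subdomains, matching the block-Jacobi diagonal), and $-A_{12}\uB_2^{(n-1)}$ generates $\tB_2 \lB_2^{(n-1)}$.

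The main obstacle, and the step I would spend the most care on, is making the bookkeeping of the interface bilinear form $\tA_\GAM$ exact: in the block Jacobi splitting $M = \mathrm{diag}(A_1,A_2)$, the full interface mass/penalty term $2\mu\norm{\cdot}_\GAM^2$ must be distributed consistently so that the diagonal solve $A_i \uB_i^{(n)}$ corresponds to the operator $(\tA_\GAM - \tB_i)$ rather than $(\half\tA_\GAM - \tB_i)$ or $(\tA_\GAM - \tB_i - \tB_j)$. This requires carefully tracking how the primal stiffness matrix $A$ in \eqref{eq:linsyspart} arises from \eqref{eq:varIPH} versus how $\tA$ arises from \eqref{eq:varIPH2} after the static condensation that identifies the two (cited from \cite{lehrenfeld2010hybrid}); in particular one must check that $A_i$ equals $\tA_i$ plus a boundary contribution on $\GAM$ that, under the harmonic extension, reassembles into the $\tA_\GAM$ block correctly. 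Once this identification is pinned down, the rest is a direct substitution: apply $\Hopt_i$ to the inductive hypothesis, use $\tA_i\Hopt_i(\psi) = -\tA_{i\GAM}\psi$ to replace interior terms, and read off \eqref{eq:shurITE}. I would also note at the end that this equivalence, combined with the eigenvalue bound \eqref{eq:Best} on $\tB_i$, immediately yields the slow $\rho = 1 - O(h)$ contraction factor for Algorithm \ref{algo:addschwarz}, which is the point the authors want to make before introducing the improved method.
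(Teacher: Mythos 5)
The paper itself gives no argument for this proposition --- its ``proof'' is a citation of \cite{hajian2014} --- so there is nothing to compare line by line; judged on its own merits, your strategy is the right one and is essentially the argument the authors are deferring to. The sound core of your plan is: induct on $n$, use the local solve $\tA_i\uB_i^{(n)}+\tA_{i\GAM}\lB_i^{(n)}=0$ defining $\Hopt_i$, and use Proposition \ref{prop:Bopt} to read the interface update $(\tA_\GAM-\tB_1)\lB_1^{(n)}=\tB_2\lB_2^{(n-1)}$ as the weak relation $2\mu\lambda_{h,1}^{(n)}=\bigl(\mu u_1^{(n)}-\PDif{u_1^{(n)}}{\NOR_1}\bigr)+\bigl(\mu u_2^{(n-1)}-\PDif{u_2^{(n-1)}}{\NOR_2}\bigr)$ on $\GAM$; since all traces and normal derivatives of $\PO^k$ functions restricted to an edge lie in $\PO^k(e)$ and $\mu$ is edgewise constant, this weak relation is in fact an identity in $\Lambda_h$, so $\lambda_{h,1}^{(n)}$ can be substituted directly into $\tilde{a}_{1\GAM}(v_1,\lambda_{h,1}^{(n)})$. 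Carrying out that substitution, the $u_1$--$v_1$ terms of $\tilde{a}_1$ plus the substituted boundary terms reassemble exactly the diagonal interface contributions of the primal form (the factors $\tfrac12$ on $\dotS{\partial_{\NOR_1}u_1}{v_1}$ and $\dotS{\partial_{\NOR_1}v_1}{u_1}$, the $\tfrac{\mu}{2}$ jump penalty, and the $-\tfrac{1}{2\mu}\dotS{\jump{\nabla u}}{\jump{\nabla v}}$ term), while the $u_2$--$v_1$ terms reproduce precisely the primal coupling block $A_{12}$; invertibility of $A_1$ (s.p.d.) and of $\tA_\GAM-\tB_1$ (Lemma \ref{lemma:shurpos}) then lets you conclude equality of the iterates. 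So the ``bookkeeping'' step you flag as the main obstacle does close, and it is indeed where all the work lies.

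Two caveats. First, as written your text is a plan, not a proof: the decisive interface algebra just described is announced but never carried out, and the inductive step needs the explicit uniqueness argument (each method's step is a nonsingular solve) to pass from ``satisfies the same equation'' to ``is the same iterate''. Second, the parenthetical claim that the $\tA_\GAM$ in $(\tA_\GAM-\tB_1)$ arises from ``the $\half\tA_\GAM$ split counted from both subdomains'' is off the mark: that splitting belongs to the coercivity argument \eqref{eq:coercAtilde}, whereas here $\tA_\GAM$ is simply the full interface block of \eqref{eq:DDshur} kept on the left in the splitting of $\tilde{S}_\GAM=\tA_\GAM-\tB_1-\tB_2$; the correct mechanism is the substitution of the Robin-trace identity above, not a redistribution of $\half\tA_\GAM$. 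With those points repaired, your route is correct and matches the argument the paper outsources to \cite{hajian2014}.
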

\begin{proof}
  See \cite{hajian2014}.
\end{proof}
%

\subsection{Analysis of classical Schwarz for the Schur complement}

By linearity we consider the error equations and we denote by $
\BO{e}_{i}^{(n)} := \lB_{i}^{(n)} - \lB$.  The iterations in
\eqref{eq:shurITE} can be rewritten in a more suitable form for
analysis. Since $\tA_{\GAM}$ is s.p.d.~(it is just a scaled mass
matrix), the square-root $\tA_{\GAM}^{1/2}$ exists and is also
s.p.d. Therefore, for $i,j \in \{1,2\}$ and $i\not = j$ we can write
equivalently
\begin{equation*}
  \begin{array}{lrcl}
    &
    (\tA_\GAM^{} - \tB_i^{} ) \BO{e}_i^{(n)} &=& \tB_j^{} \BO{e}_j^{(n-1)}
    \\
    \Leftrightarrow &
    \tA_{\GAM}^{1/2} ( I - \tA_{\GAM}^{-1/2} \tB_i^{} \tA_{\GAM}^{-1/2} )
    \tA_{\GAM}^{1/2} \BO{e}_i^{(n)} 
    &=& \tB_j^{} \BO{e}_j^{(n-1)}
    \\
    \Leftrightarrow &
    ( I - \tA_{\GAM}^{-1/2} \tB_i^{} \tA_{\GAM}^{-1/2} ) \tilde{\BO{e}}_i^{(n)} &=& 
    (\tA_{\GAM}^{-1/2} \tB_j^{} \tA_{\GAM}^{-1/2}) \tilde{\BO{e}}_j^{(n-1)},
  \end{array}
\end{equation*}
where $ \tilde{\BO{e}}_{i} = \tA_{\GAM}^{1/2} \BO{e}_i^{} $. We define
\begin{equation}
  C_i := \tA_{\GAM}^{-1/2} \tB_i^{} \tA_{\GAM}^{-1/2},
\end{equation}
which is invertible and symmetric. Since $\tA_\GAM - \tB_i$ is invertible and
$\tA_\GAM^{1/2}$ exists we can conclude that $I-C_i$ is also invertible by
definition. Therefore we have
\begin{equation*}
    ( I - C_i^{} ) \tilde{\BO{e}}_{i}^{(n)} = C_j^{} \tilde{\BO{e}}_{j}^{(n-1)}
    = C_j^{} ( I - C_j^{} )^{-1} C_i^{} \tilde{\BO{e}}_{i}^{(n-2)},
\end{equation*}
or
\begin{equation*}
    \phiB_{i}^{(n)} = 
    C_j^{} ( I - C_j^{} )^{-1} \cdot C_i^{} ( I - C_i^{} )^{-1} \phiB_{i}^{(n-2)},
\end{equation*}
where $\phiB_i = ( I - C_i ) \tilde{\BO{e}}_{i}$. Finally the
iterations can be rewritten as
\begin{equation} \label{eq:shurITEan}
  \phiB_{i}^{(n)} = 
  ( C_j^{-1} - I )^{-1} \cdot ( C_i^{-1} - I )^{-1} \phiB_{i}^{(n-2)}.
\end{equation}

We show how the contraction factor of the iteration in
\eqref{eq:shurITEan} is related to the eigenvalues of $\{C_i\}$. Let
$\norm{\cdot}_{2}$ be the usual 2-norm in $\RE^{n}$, and denote by $D_i := (
C_i^{-1} - I )^{-1}$. Then we can estimate
\begin{equation*}
  \norm{\phiB_{i}^{(n)}}_2 \leq 
  \norm{D_j D_i}_2 \, \norm{\phiB_{i}^{(n-2)} }_2 \leq
  \norm{D_j}_{2} \, \norm{D_i}_2 \, \norm{\phiB_{i}^{(n-2)} }_2 =
  \rho( D_j ) \, \rho( D_i ) \, \norm{\phiB_{i}^{(n-2)} }_2,
\end{equation*}
since $\{ D_i \}$ are symmetric.  In other words we have used a
different norm for the error: with $E_i := (I - C_i^{} )
\tA_{\GAM}^{1/2}$, which is invertible, we have
\begin{equation*}
  \norm{ \BO{e}_i }_{E_i^\top E_i^{}} = \norm{ E_i \BO{e}_i }_{2}
  = \norm{ \phiB_i }_{2}.
\end{equation*}
Let $\sigma(M)$ denote an eigenvalue of a given matrix $M$. Then we have
\begin{equation*}
  \rho(D_i) := \max_{\sigma(D_i)} | \sigma(D_i) | = 
  \max_{\sigma(C_i)} \left| \frac{\sigma(C_i)}{1- \sigma(C_i)} \right|.
\end{equation*}
Hence a sufficient condition for convergence is that $ \sigma(C_i) \in
(-\infty, 1/2) $. On the other hand by definition of $C_i$ we know
that $\sigma(C_i)$ are the eigenvalues of the generalized eigenvalue
problem $\tB_{i} \phiB = \sigma \, \tA_\GAM \phiB $. Since both
$\tA_\GAM$ and $\tB_i$ are s.p.d.,~$\sigma(C_i)$ is
positive.
Therefore a sufficient condition for convergence is to show that $
\sigma(C_i) \in (0, 1/2) $.

Recall that since $C_i$ is symmetric we have
\begin{equation}
  \label{eq:CestL}
  \sigma_{\text{min}}(C_i) = 
  \inf_{\phiB \not = 0} \frac{\phiB^\top \tB_i \phiB}{\phiB^\top \tA_{\GAM} \phiB}
  = 
  \inf_{\phiB \not = 0} \frac{\phiB^\top \tB_i \phiB}{2\mu
    \norm{\varphi}_{\GAM}^{2}}
  \geq \frac{c_B}{2},
\end{equation}
where we have used the lower bound estimate of Lemma
\ref{lemma:Beigen}. Here $0 < c_B < 1$.
The upper bound for $\sigma_{\max}(C_i)$ can also be obtained using
Lemma \ref{lemma:Beigen}. Hence
\begin{equation}
  \label{eq:CestU}
  \sigma_{\max}(C_i) =
  \sup_{\phiB \not = 0} \frac{\phiB^\top \tB_i \phiB}{2\mu \norm{\varphi}_{\GAM}^{2}}
  \leq \frac{1}{2} \Big(1 - C \frac{h}{\alpha H} \Big),
\end{equation}
which is strictly less than $\frac{1}{2}$. Consequently for the
eigenvalues of $D_i$, we obtain the estimate
\begin{equation*}
  0 < \frac{c_B}{2 - c_B} \leq \sigma(D_i) \leq 
  1 - C \frac{h}{\alpha H} < 1.
\end{equation*}
We summarize the convergence result in the following theorem.
\begin{theorem}
There exists an $\alpha>0$ independent of $H$ and $h$ such that
Algorithm \ref{algo:addschwarz} converges and the contraction factor
is bounded by
\begin{equation}
  \rho \leq 1 - O(h).
\end{equation}
\end{theorem}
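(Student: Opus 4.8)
The plan is to combine the contraction-factor analysis already set up in \eqref{eq:shurITEan}--\eqref{eq:CestU} with the eigenvalue bounds established in Lemma \ref{lemma:Beigen}. Recall that the iteration in the transformed variable satisfies
\[
  \phiB_i^{(n)} = D_j D_i \, \phiB_i^{(n-2)}, \qquad D_i := (C_i^{-1} - I)^{-1},
\]
so that $\norm{\phiB_i^{(n)}}_2 \leq \rho(D_j)\rho(D_i) \norm{\phiB_i^{(n-2)}}_2$, and the contraction factor over two steps is $\rho^2 \leq \rho(D_1)\rho(D_2)$. It therefore suffices to bound $\rho(D_i)$ from above, which by the monotone dependence $\sigma(D_i) = \sigma(C_i)/(1-\sigma(C_i))$ on $\sigma(C_i) \in (0,1/2)$ reduces to bounding $\sigma_{\max}(C_i)$ from above, strictly below $1/2$.

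The key step is to invoke the sharp upper bound from Lemma \ref{lemma:Beigen}, namely $\phiB^\top \tB_i \phiB \leq (1 - C_B \frac{h}{H\alpha})\mu\norm{\varphi}_\GAM^2$, together with $\phiB^\top \tA_\GAM \phiB = 2\mu\norm{\varphi}_\GAM^2$. This gives, exactly as in \eqref{eq:CestU},
\[
  \sigma_{\max}(C_i) \leq \frac{1}{2}\Big(1 - C_B\frac{h}{H\alpha}\Big),
\]
and hence
\[
  \rho(D_i) = \frac{\sigma_{\max}(C_i)}{1 - \sigma_{\max}(C_i)}
  \leq \frac{1 - C_B\frac{h}{H\alpha}}{1 + C_B\frac{h}{H\alpha}}
  = 1 - O\Big(\frac{h}{H\alpha}\Big),
\]
using that $x \mapsto x/(1-x)$ is increasing on $(0,1/2)$. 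The $\alpha$ of Lemma \ref{lemma:Beigen} is fixed (chosen large enough, independently of $h$ and $H$) so that all the bounds hold; with $H$ fixed this is an $O(h)$ gap. Squaring, $\rho \leq \sqrt{\rho(D_1)\rho(D_2)} \leq 1 - O(h)$, which is the claimed estimate. The lower bound $c_B/2 \leq \sigma_{\min}(C_i)$ is only needed to confirm $\sigma(C_i)$ stays bounded away from $0$ so that $D_i$ is well-defined and the iteration does not stall trivially; it does not enter the final rate.

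The only mild subtlety — and the step I would be most careful with — is the bookkeeping of the norms: the contraction is measured in the $E_i^\top E_i$-norm of $\BO{e}_i$ rather than the Euclidean norm, and the two-step recursion alternates the index. I would state explicitly that since all norms on a finite-dimensional space are equivalent, convergence in the $\norm{\cdot}_{E_i^\top E_i}$-norm implies convergence of $\lB_i^{(n)} \to \lB$, and that the asymptotic contraction factor of the fixed-point iteration \eqref{eq:shurITEan} is the quantity $\rho$ in the statement; the per-step factor $\rho$ then satisfies $\rho^2 \leq \rho(D_1)\rho(D_2) \leq (1 - O(h))^2$. No additional estimates beyond Lemma \ref{lemma:Beigen} are required, so the proof is short.
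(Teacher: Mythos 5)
Your proposal is correct and follows essentially the same route as the paper: the same transformation to $\phiB_i = (I-C_i)\tA_\GAM^{1/2}\BO{e}_i$, the same symmetric matrices $D_i=(C_i^{-1}-I)^{-1}$, and the same use of Lemma \ref{lemma:Beigen} via the Rayleigh-quotient bounds \eqref{eq:CestL}--\eqref{eq:CestU} to get $\sigma(D_i)\leq 1-O(h/(\alpha H))$ and hence $\rho\leq 1-O(h)$. Your explicit bound $\rho(D_i)\leq\bigl(1-C_B\tfrac{h}{H\alpha}\bigr)/\bigl(1+C_B\tfrac{h}{H\alpha}\bigr)$ and the remark on the $E_i^\top E_i$-norm bookkeeping are only slightly more detailed versions of what the paper does, not a different argument.
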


\subsection{Analysis of an optimized Schwarz method for the Schur complement}

As it has been shown in \cite{hajian2013block}, the IPH discretization
is imposing Robin transmission conditions between subdomains, and the
Robin parameter is precisely the penalty parameter $\mu$ of the DG
method. For approximation purposes and ensuring coercivity, $\mu$ is
set to be ${\alpha}{h^{-1}}$ for some $\alpha>0$ large and independent
of $h$.

In the Schwarz theory with Robin transmission conditions this choice
of $\mu$ corresponds to damping high frequencies of the DtN
operator. In other words, the low frequencies are responsible for the
slow convergence of the algorithm that we have analyzed in the
previous subsection; as we have shown the contraction factor is
$\rho = 1 - O(h)$. Optimized Schwarz theory suggests to choose
the Robin parameter $O(h^{-1/2})$, see \cite{ganderos}, while this is
not possible for an IPH discretization since we lose coercivity and
optimal approximation properties.

The remedy comes from an idea first introduced in
\cite{discacciati2004operator} and later independently in
\cite{dolean} for Maxwell's equations.  The idea is to perturb the
transmission conditions such that while iterating we produce a
different sequence but obtaining the same fixed-point as the original
Schwarz algorithm.

Let us introduce two new unknowns, one for each subdomain, along the
interface called $\{ r_{12}, r_{21} \}$ such that $r_{ij} \in
\Lambda_h$. Recall that by Proposition \ref{prop:Bopt} an application
of $\tB_i \lB_i$ is equivalent to $\mu u_i - \PDif{u_i}{\NOR_i}$ on
the interface where $u_i := \Hopt_i(\lambda_{h,i})$.  Now let $r_{ij}
= (\mu u_j - \PDif{u_j}{\NOR_j} ) |_{\GAM}$. Let us denote by $M_\GAM$
the mass matrix along the interface and $\BO{r}_{ij}$ the
corresponding DOFs of $r_{ij}$. Then we observe that
\begin{equation*}
\phiB^\top M_\GAM \BO{r}_{ij} = \dotS{r_{ij} }{\varphi}_{\GAM}
= \dotS{ \mu u_j - \PDif{u_j}{\NOR_j} }{\varphi}_{\GAM} 
= \phiB^{\top} \tB_{j} \lB_{j}, 
\quad \forall \varphi \in \Lambda_h.
\end{equation*}
Therefore we conclude that
\begin{equation*}
  M_\GAM \BO{r}_{ij} = \tB_j \lB_j,
\end{equation*}
and the Schwarz iteration \eqref{eq:shurITE} can be rewritten as
\begin{equation*}
  \begin{array}{rcl}
    (\tA_\GAM - \tB_i ) \lB_i^{(n)} &=& M_\GAM \BO{r}_{ij}^{(n)} + \BO{g}_{\GAM},
    \\
    M_\GAM \BO{r}_{ij}^{(n)} &=& \tB_j \lB_j^{(n-1)}.
  \end{array}
\end{equation*}
We modify the second equation as suggested in \cite{dolean} and
\cite{hajian2014} to the form
\begin{equation*}
  M_\GAM^{} \BO{r}_{ij}^{(n)} - \hat{p}\, \tB_i^{} \lB_i^{(n)}  = 
  \tB_j^{} \lB_j^{(n-1)} - \hat{p}\, M_\GAM^{} \BO{r}_{ji}^{(n-1)},
\end{equation*}
for $i,j \in \{1,2\}$ and $i\not = j$. Here $0 \leq \hat{p} < 1$ is a
parameter which we use for optimization.  At convergence one recovers
the original equations and therefore the fixed point of the iteration
is the same as for the original method.

\begin{remark}
\label{remark:phat}
The above modification is shown in \cite{hajian2014} to be equivalent
(at the continuous level) to imposing
\begin{equation}
  \Big( \frac{1 - \hat{p}}{1 + \hat{p}} \, \mu + \PDif{}{\NOR_i} \Big) u_i^{(n)} =
  \Big( \frac{1 - \hat{p}}{1 + \hat{p}} \, \mu + \PDif{}{\NOR_i} \Big) u_j^{(n-1)}
\end{equation}
for $i,j \in \{1,2\}$ and $i\not=j$. Note that if $\hat{p} =
\frac{1-\sqrt{h}}{1+\sqrt{h}}$ then $\frac{1 - \hat{p}}{1 + \hat{p}}
\mu \propto \frac{1}{\sqrt{h}}$ which is the right choice of parameter
according to optimized Schwarz theory. We will see that this is
exactly the right choice for $\hat{p}$ at the discrete level.
\end{remark}

The analysis of this algorithm is possible using the
framework established for the original method. We can eliminate the
$\{ r_{ij} \}$ as follows:
\begin{equation*}
  \begin{array}{rcll}
    (\tA_\GAM^{} - \tB_i^{} ) \lB_i^{(n)} &=& 
    \hat{p} \tB_i^{} \lB_i^{(n)} + \tB_j^{} \lB_j^{(n-1)} - 
    \hat{p} M_\GAM^{} \BO{r}_{ji}^{(n-1)}
    & + \BO{g}_{\GAM}^{} 
    \\
    &=&
    \hat{p} \tB_i^{} \lB_i^{(n)} + \tB_j^{} \lB_j^{(n-1)} 
    - \hat{p} (\tA_\GAM^{} - \tB_j^{} ) \lB_j^{(n-1)}
    & + (1+\hat{p})\BO{g}_{\GAM}^{} ,
  \end{array}	
\end{equation*}
which simplifies to
\begin{equation*}
  (\tA_\GAM^{} - (1+\hat{p})\tB_i^{} ) \lB_i^{(n)} = 
  - ( \hat{p} \tA_\GAM^{} - (1+\hat{p})\tB_j^{} ) \lB_j^{(n-1)}
  + (1+\hat{p}) \BO{g}_{\GAM}^{}.
\end{equation*}
\begin{algorithm} \label{algo:OS}
Let $\lambda_{h,1}^{(0)}, \lambda_{h,2}^{(0)} \in \Lambda_h$ be two
random initial guesses.  Then for $n=1,2,\hdots$ find
$\big\{ \lambda_{h,i}^{(n)} \big\}$ such that
\begin{equation} \label{eq:OSITE}
  \begin{array}{rcl}
    (\tA_\GAM - (1+\hat{p})\tB_1 ) \lB_1^{(n)} &=& 
    - ( \hat{p} \tA_\GAM - (1+\hat{p})\tB_2 ) \lB_2^{(n-1)}
    + (1+\hat{p}) \BO{g}_{\GAM} ,
    \\
    (\tA_\GAM - (1+\hat{p})\tB_2 ) \lB_2^{(n)} &=& 
    - ( \hat{p} \tA_\GAM - (1+\hat{p})\tB_1 ) \lB_1^{(n-1)}
    + (1+\hat{p}) \BO{g}_{\GAM} .
  \end{array}
\end{equation}
\end{algorithm}

Since $\hat{p}<1$, we can use Lemma \ref{lemma:Beigen} and
conclude that the left hand side is positive definite and therefore
invertible. At convergence we have $(1-\hat{p})\tA_\GAM ( \lB_1 -
\lB_2 ) = 0 $ which implies $\lB_1=\lB_2 = \tilde{S}_\GAM^{-1}
\BO{g}_\GAM$ if $\hat{p} \not = 1$.

Comparing to the original Schwarz method, Algorithm
\ref{algo:addschwarz}, we weakened the positive-definiteness of the
left-hand side.  This plays a key role in faster convergence.  The
optimized algorithm can be viewed as a different splitting of the
Schur complement. More precisely we multiplied it by $(1+\hat{p})$ and
this time a fraction of $\tA_\GAM$, namely $\hat{p} \tA_\GAM$, has
been put to the right-hand side.

We consider the error equation and we can proceed as before to obtain
an iteration for $\BO{e}_i$ only,
\begin{equation*}
  \begin{array}{ll}
    & (\tA_\GAM - (1+\hat{p})\tB_i ) \BO{e}_i^{(n)}  =
    \\
    & \qquad  
    ( \hat{p} \tA_\GAM - (1+\hat{p})\tB_j ) \cdot 
    (\tA_\GAM - (1+\hat{p})\tB_j )^{-1} \cdot 
    ( \hat{p} \tA_\GAM - (1+\hat{p})\tB_i ) 
    \BO{e}_i^{(n-2)}.
  \end{array}
\end{equation*}
With $\phiB_i = ( I - (1+\hat{p})C_i ) \tA_\GAM^{1/2} \BO{e}_i$, we have
\begin{equation*}
  \begin{array}{ll}
    &\phiB_i^{(n)} = ( \hat{p} I - (1+\hat{p}) C_j ) \cdot
    ( I - (1+\hat{p})C_j )^{-1}
    \\
    & 
    \qquad \qquad
    \cdot
    ( \hat{p} I - (1+\hat{p}) C_i ) \cdot
    ( I - (1+\hat{p})C_i )^{-1} \phiB_{i}^{(n-2)}.
  \end{array}
\end{equation*}
Denoting by $\hat{D}_i := ( \hat{p} I - (1+\hat{p}) C_i ) \cdot ( I -
(1+\hat{p})C_i )^{-1} $ and simplifying, we get
\begin{equation}
  \hat{D}_i = I - (1-\hat{p}) \big( I - (1+\hat{p})C_i \big)^{-1},
\end{equation}
which shows that $\hat{D}_i$ is symmetric. Therefore we have
\begin{equation*}
  \norm{\phiB_{i}^{(n)}}_2 \leq 
  \rho( \hat{D}_j ) \, \rho( \hat{D}_i ) \, \norm{\phiB_{i}^{(n-2)} }_2.
\end{equation*}
The estimate for the eigenvalues of $\hat{D}_i$ can be obtained as
before. More precisely we have
\begin{equation*}
  \sigma( \hat{D}_i ) = 1 - \frac{1-\hat{p}}{1 - (1+\hat{p}) \,
    \sigma(C_i)}.
\end{equation*}
Recall that $\sigma(C_i) \in \left[\frac{1}{2} - c, \frac{1}{2} - C
  \frac{h}{\alpha H} \right]$ for $0< c < \frac{1}{2}$ and $C>0$ and
independent of $h$, $H$.  We can use $\hat{p}$ to optimize $\rho(
\hat{D}_i )$. Following Remark \ref{remark:phat}, let us make the ansatz
\begin{equation*}
  \hat{p} = \frac{1 - (\frac{h}{\alpha})^{\gamma}}{1 + (\frac{h}{\alpha})^{\gamma}} < 1,
  \quad \gamma \in \RE^+.
\end{equation*}
This implies that
\begin{equation}
  1 - \frac{1}{\half + { \frac{C}{H} } (\frac{h}{\alpha})^{1-\gamma} }  
  \leq \sigma(\hat{D}_i) \leq
  1 - \frac{1}{\half + c (\frac{h}{\alpha})^{-\gamma} },
\end{equation}
Best performance is achieved, if $\gamma := \half$
which as $h \rightarrow 0$ leads to
\begin{equation}
  -1 + c_1 \sqrt{ \frac{h}{\alpha} } \leq \sigma(\hat{D}_i) \leq
  1 - c_2 \sqrt{ \frac{ h}{\alpha} }.
\end{equation}
Note that the iteration matrix, $\hat{D}_i$, is not positive definite
anymore but it has a converging spectrum and the contraction factor is
much better than the one in Algorithm \ref{algo:addschwarz}.  We
summarize our results in 
\begin{theorem}
There exists an $\alpha>0$ independent of $H$ and $h$ such that
Algorithm \ref{algo:OS} converges and the contraction factor is
bounded by
\begin{equation}
  \rho \leq 1 - O(\sqrt{h}).
\end{equation}
\end{theorem}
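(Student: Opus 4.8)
The plan is to reduce the convergence question to a one-dimensional spectral estimate for the iteration operator and then tune the free parameter $\hat p$. The correct fixed point is not at issue: the discussion preceding Algorithm \ref{algo:OS} already shows that at convergence $(1-\hat p)\tA_\GAM(\lB_1-\lB_2)=0$, so $\lB_1=\lB_2=\tilde{S}_\GAM^{-1}\BO{g}_\GAM$ whenever $\hat p\neq1$; hence it suffices to show that the error iterates decay. By linearity we take $\fB=0$, so $\BO{g}_\GAM=0$, and for this case Algorithm \ref{algo:OS} has already been recast as the two-step recursion $\phiB_i^{(n)}=\hat D_j\hat D_i\,\phiB_i^{(n-2)}$, with $\hat D_i=I-(1-\hat p)\bigl(I-(1+\hat p)C_i\bigr)^{-1}$ and $\phiB_i=\bigl(I-(1+\hat p)C_i\bigr)\tA_\GAM^{1/2}\BO{e}_i$. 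First I would certify that this change of variables is legitimate: since $0\le\hat p<1$ and $\tA_\GAM-2\tB_i$ is s.p.d.\ by Lemma \ref{lemma:Beigen}, every eigenvalue of $C_i$ obeys $\sigma(C_i)<\tfrac12<\tfrac1{1+\hat p}$, so $I-(1+\hat p)C_i$ is invertible, $\hat D_i$ is well defined, and $\phiB_i\leftrightarrow\BO{e}_i$ is a linear bijection; thus $\phiB_i^{(n)}\to0$ is equivalent to $\BO{e}_i^{(n)}\to0$.

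The second step passes to the spectral radius. Since $C_i=\tA_\GAM^{-1/2}\tB_i\tA_\GAM^{-1/2}$ is symmetric, so is $\hat D_i$, hence $\|\hat D_i\|_2=\rho(\hat D_i)$ and $\|\phiB_i^{(n)}\|_2\le\rho(\hat D_j)\,\rho(\hat D_i)\,\|\phiB_i^{(n-2)}\|_2$. By spectral mapping $\sigma(\hat D_i)=1-\dfrac{1-\hat p}{1-(1+\hat p)\sigma(C_i)}$, which on the range of $\sigma(C_i)$ is monotone and whose only pole, at $\sigma=\tfrac1{1+\hat p}$, lies outside that range, so $\max|\sigma(\hat D_i)|$ is attained at an endpoint of the spectrum of $C_i$. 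Plugging in $\sigma(C_i)\in[\tfrac12-c,\ \tfrac12-C\tfrac{h}{\alpha H}]$ from \eqref{eq:CestL}--\eqref{eq:CestU} (equivalently Lemma \ref{lemma:Beigen}), with $0<c<\tfrac12$ and $C>0$ independent of $h,H$, yields a two-sided bound on $\sigma(\hat D_i)$ in terms of $\hat p$ and $h/(\alpha H)$.

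The third step is the optimization. With the ansatz $\hat p=\dfrac{1-(h/\alpha)^\gamma}{1+(h/\alpha)^\gamma}$, so $1-\hat p\asymp(h/\alpha)^\gamma$, the two endpoint values become $\sigma(\hat D_i)\le 1-c_2(h/\alpha)^{\gamma}$ at $\sigma(C_i)=\tfrac12-c$ and $\sigma(\hat D_i)\ge -1+c_1(h/\alpha)^{1-\gamma}/H$ at $\sigma(C_i)=\tfrac12-C\tfrac{h}{\alpha H}$. Balancing the exponents, i.e.\ requiring $\gamma=1-\gamma$, forces $\gamma=\half$, and then $|\sigma(\hat D_i)|\le1-O(\sqrt{h/\alpha})$ uniformly in $i$. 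Consequently $\rho(\hat D_j)\rho(\hat D_i)\le(1-O(\sqrt h))^2$ for the two-step map, so the per-step contraction factor satisfies $\rho\le1-O(\sqrt h)$; here $\alpha$ is fixed once, large enough for Lemma \ref{lemma:Beigen}, and then absorbed into the constant.

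The main obstacle is the monotonicity/endpoint bookkeeping near $\sigma(C_i)\approx\tfrac12$: the pole of $\sigma\mapsto1-\tfrac{1-\hat p}{1-(1+\hat p)\sigma}$ sits at $\sigma=\tfrac1{1+\hat p}>\tfrac12$, which is harmless for any fixed $\hat p<1$ but collapses toward $\tfrac12$ as $h\to0$, approaching the upper spectral endpoint $\tfrac12-C\tfrac{h}{\alpha H}$. One must quantify that it stays strictly to the right and control how the gap shrinks; this is exactly what produces the $(h/\alpha)^{1-\gamma}/H$ term and pins the optimal exponent at $\gamma=\half$. A secondary subtlety is that near that endpoint $\sigma(\hat D_i)$ is negative, so it is $|\sigma(\hat D_i)|$ rather than $\sigma(\hat D_i)$ that has to be bounded, and that this bound controls the composite map $\hat D_j\hat D_i$ rather than a single iteration, which only affects the interpretation of $\rho$ by a harmless square root.
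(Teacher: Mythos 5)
Your proposal is correct and follows essentially the same route as the paper: the two-step error recursion in the variables $\phiB_i$, the symmetric operators $\hat D_i = I-(1-\hat p)\bigl(I-(1+\hat p)C_i\bigr)^{-1}$, the spectral bounds $\sigma(C_i)\in[\tfrac12-c,\ \tfrac12-C\tfrac{h}{\alpha H}]$ from Lemma \ref{lemma:Beigen}, the ansatz $\hat p=\frac{1-(h/\alpha)^\gamma}{1+(h/\alpha)^\gamma}$, and the exponent balancing that fixes $\gamma=\tfrac12$ and yields $\rho\le 1-O(\sqrt h)$. The only difference is that you make explicit a few points the paper leaves implicit (invertibility of $I-(1+\hat p)C_i$, monotonicity of the spectral map so extremes occur at the endpoints, and the need to bound $|\sigma(\hat D_i)|$ since the spectrum becomes negative), which is consistent with the paper's remarks.
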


\section{A multi subdomain algorithm}\label{sec:multi}

We have introduced and analyzed a two subdomain optimized Schwarz
method (OSM) so far. In this section we introduce a multi subdomain
algorithm for the IPH discretization. This algorithm is a natural
generalization of the two subdomain method. Often special care has to
be taken in OSMs for classical FEM discretizations at cross-points,
that is a node which is shared by more than two subdomains, see
\cite{gander2012best,gander2013applicability,gander2014CPDD}. This is
not the case when we work with a DG discretization because subdomains
communicate with each other only if they have an intersection of
non-zero measure. Therefore the problem with cross-points does not
arise, since a cross-point is of measure-zero, as at the continuous
level.

Let us start defining the multi-subdomain geometry. We first partition
the mono-domain $\Omega$ into $N_s$ subdomains such that the
interface, $\GAM$ between them is a subset of internal edges,
$\EPS^0$. More precisely, we denote the subdomains by $\{ \OM_i^{}
\}_{i=1}^{N_s}$ and the interface between two subdomain by
\begin{equation*}
  \GAM_{ij}^{} := \partial \OM_i \cap \partial \OM_j,
  \quad (i \not= j),
\end{equation*}
and the global interface by 
\begin{equation*}
  \GAM := \bigcup_{i\not=j} \GAM_{ij} \subset \EPS^0.
\end{equation*}

Now the hybridizable formulation of IPH can be written as: find
$(u_h,\lambda_h) \in V_h \times \Lambda_h$ such that 
\begin{equation}
  \tilde{a}( (u_h,\lambda_h),(v,\varphi) ) = \dotV{f}{v}_{\Th}, \quad
  \forall (v, \varphi) \in V_h \times \Lambda_h,
\end{equation}
where the bilinear form is defined as
\begin{equation}
  \tilde{a}( (u,\lambda), (v,\varphi) ) := \tilde{a}_\GAM(\lambda,
  \varphi) + \sum_{i=1}^{N_s} \big( \tilde{a}_i(u_i,v_i) +
  \tilde{a}_{i\GAM}(u_i,\varphi) + \tilde{a}_{i\GAM}(v_i,\lambda) \big).
\end{equation}
The only modified bilinear form is $\tilde{a}_{i\GAM}(\cdot,\cdot)$
  since it acts now on $\partial \OM_i$, that is
\begin{equation}
  \tilde{a}_{i\GAM}(u_i,\varphi) := 
    \dotS{ \PDif{u_i}{\NOR_i} - \mu u_i }{\varphi}_{\partial \OM_i}.
\end{equation}
\begin{figure} \label{fig:multiabs}
  \centering
  \epsfig{file=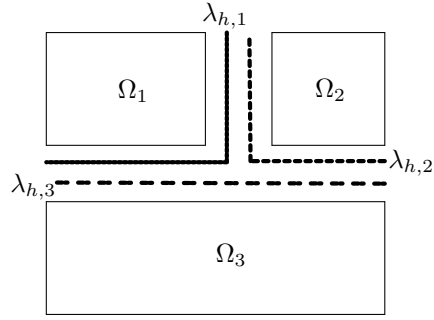, scale=1.0}
  \caption{A multi-subdomain configuration with an interface variable,
    $\{ \lambda_{h,i} \}$, assigned to each subdomain, $\OM_i$.
    }
\end{figure}

Let us focus on two subdomains which share an interface, $\GAM_{ij}$.
We observe that there are two sub-problems which are communicating
through $\lambda_h$ on $\GAM_{ij}$. That is
\begin{equation*}
  \begin{array}{rcl}
    \tilde{a}_i(u_i,v_i) + \tilde{a}_{i\GAM}(v_i,\lambda_h) &=& 
    \dotV{f}{v_i}_{\Ti{i}}, \quad \forall v_i \in V_{h,i},
    \\
    \tilde{a}_j(u_j,v_j) + \tilde{a}_{j\GAM}(v_j,\lambda_h) &=& 
    \dotV{f}{v_j}_{\Ti{j}}, \quad \forall v_j \in V_{h,j},
  \end{array}
\end{equation*}
and the continuity is imposed using
\begin{equation} \label{eq:contII}
  \lambda_h = \frac{1}{2\mu} \Big(\mu u_i - \PDif{u_i}{\NOR_i} \Big) + 
  \frac{1}{2\mu} \Big(\mu u_j - \PDif{u_j}{\NOR_j} \Big),
  \quad \text{on} \, \GAM_{ij}.
\end{equation}
Now we relax the constraint that $\lambda_h$ is single-valued on
$\GAM$ and allocate $\lambda_{h,i}$ to each subdomain $\OM_i$. Each
$\lambda_{h,i}$ is defined on $\partial \OM_i \setminus \partial \OM$;
for an example see Figure \ref{fig:multiabs}. We have therefore twice DOFs along
$\GAM_{ij}$. Therefore we should split the continuity equation
(\ref{eq:contII}) to provide two conditions; one for each
$\lambda_{h,i}$. We use the same idea as in Algorithm \ref{algo:OS}
and relax the continuity equation in the same fashion:
\begin{equation*}
    \frac{1}{1 + \hat{p}}  \lambda_{h,i} + 
    \frac{\hat{p}}{1 + \hat{p}}  \lambda_{h,j} = 
  \frac{1}{2\mu} \Big(\mu u_i - \PDif{u_i}{\NOR_i} \Big) + 
  \frac{1}{2\mu} \Big(\mu u_j - \PDif{u_j}{\NOR_j} \Big),
  \quad (i\not=j).
\end{equation*}
Here $\hat{p}$ is a parameter which is used for optimization
purposes. This suggests the following iterative method to find the
pairs $\big\{(u_i,\lambda_{h,i})\big\}_{i=1}^{N_s}$ in parallel:
\begin{algorithm} \label{algo:multi}
Let $\big\{(u_i^{(0)},\lambda_{h,i}^{(0)})\big\}_{i=1}^{N_s}$ be a set
of initial guesses for all subdomains. Then for $n=1,2,\hdots$ find
$\big\{(u_i^{(n)},\lambda_{h,i}^{(n)})\big\}_{i=1}^{N_s}$ such that
\begin{equation} \label{eq:multi-loc}
  \begin{array}{rcl}
    \tilde{a}_i(u_i^{(n)},v_i) +
    \tilde{a}_{i\GAM}(v_i,\lambda_{h,i}^{(n)} ) &=& 
    \dotV{f}{v_i}_{\Ti{i}}, \quad \forall v_i \in V_{h,i},
  \end{array}
\end{equation}
and the continuity condition on $\GAM_{ij}$ reads
\begin{equation} \label{eq:multi-cont}
  \frac{1}{1 + \hat{p}} \lambda_{h,i}^{(n)} -
  \frac{1}{2\mu} \Big(\mu u_i - \PDif{u_i}{\NOR_i} \Big)^{(n)}
  =
  - \frac{\hat{p}}{1 + \hat{p}} \lambda_{h,j}^{(n-1)} 
  + \frac{1}{2\mu} \Big(\mu u_j - \PDif{u_j}{\NOR_j} \Big)^{(n-1)}.
\end{equation}
\end{algorithm}

At convergence we obtain $(1- \hat{p})(\lambda_{h,i} - \lambda_{h,j})
= 0$. Therefore if $\hat{p} \not = 1$, we recover that $\lambda_h$ is
single valued.
%
%
%
\begin{remark}
  We can make an ansatz for the optimal choice of $\hat{p}$ similar
  to the two-subdomain case. The transmission condition
  (\ref{eq:contII}) can be viewed as a Robin transmission condition at
  the continuous level. The Robin parameter is $\mu^\star :=
  \frac{1-\hat{p}}{1+\hat{p}} \mu$. In order to converge fast we
  should set $\mu^\star = O(h^{-1/2})$. This corresponds to the choice
  $\hat{p} := \frac{1-\sqrt{h}}{1+\sqrt{h}} < 1$.
\end{remark}

\subsection{OSM as a preconditioner}

We show now how one can use OSM as a
preconditioner for a Krylov subspace method. We start by writing
Algorithm \ref{algo:multi} at the algebraic level. We first partition
the DOFs associated with $u_h \in V_h$ into
\begin{equation*}
  \uB := ( \uB_1, \uB_2, \hdots, \uB_{N_s} )^\top.
\end{equation*}
Then we form DOFs associated to the interface unknowns $\{
\lambda_{h,i} \}_{i=1}^{N_s}$ by
\begin{equation*}
  \frL  := ( \lB_{1}, \lB_{2}, \hdots, \lB_{N_s} )^\top,
\end{equation*}
and define the augmented DOFs by $\wB := ( \uB, \frL )^\top $.

Algorithm \ref{algo:multi} can be written at the algebraic level
as
\begin{equation}
  \begin{array}{rcl}
    \underbrace{
    \MATT{K_{u u}}{K_{u \ell}}{K_{\ell u}}{K_{\ell \ell}}
    }_{K}
    \wB^{(n)}
    = 
    \underbrace{
    \MATT{0}{0}{L_{\ell u}}{L_{\ell \ell}}
    }_{L}
    \wB^{(n-1)}
    + 
    \underbrace{\Arr{\fB}{0}}_{\BO{g}}.
  \end{array}
  \label{eq:multi-alg}
\end{equation}
Note that the left-hand side matrix $K$ consists of block matrices
communicating only with each pair $(u_{h,i},\lambda_{h,i})$. Therefore
we can ``invert'' subdomain blocks independently and in parallel. This gives
a parallel preconditioner for a Krylov subspace method applied to the
system $ (K-L) \wB = \BO{g} $. 

Since the stationary iterates (\ref{eq:multi-alg}) converge with the
contraction factor $\rho \leq 1 - O(\sqrt{h})$, we expect that a
preconditioned Krylov subspace method achieves another square-root in
the contraction factor, that is $\rho \leq 1 - O(h^{1/4})$. This is
observed in the numerical experiments. Therefore this is a more
attractive method compared to the CG method with an additive Schwarz
preconditioner which has the contraction factor $\rho \leq 1 -
O(\sqrt{h})$.

\section{Numerical experiments} \label{sec:num}

We perform numerical experiments on the model problem
\begin{equation}
  \begin{array}{rcll}
    (\eta-\Delta) u &=& f,\quad & \textrm{in $\OM$},\\
    u &=& 0, & \textrm{on $\partial \OM$},
  \end{array}
\end{equation}
where $\eta=1$ and $\OM$ is either a unit square, i.e.~$(0,1)^2$, or
an L-shaped (non-convex) domain. The interface is such that it does not
cut through any element, therefore $\GAM \subset \EPS$. We use $\PO^1$
elements and $\alpha = c {(k+1)(k+2)}$ where $c>0$ is a constant
independent of $h$ and $k=1$ (polynomial degree). The algorithms are
implemented using a FORTRAN90 library for DG methods called {\tt
  GDG90}. The codes are accessible at
\begin{center}
  \url{http://unige.ch/~hajian/gdg90/}
\end{center}
\subsection{Minimum and maximum eigenvalues of $B_i$}

Before performing convergence experiments on the Algorithm
\ref{algo:addschwarz} and \ref{algo:OS}, let us validate numerically
the asymptotic behavior of the minimum and maximum eigenvalues of the
operator $B_i$, i.e.~inequality (\ref{eq:Best}). To do so, we should
measure the minimum and maximum eigenvalues of $C_i^{} :=
\tilde{A}_\GAM^{-1/2} B_i^{} \tilde{A}_\GAM^{-1/2}$.  We generate a
sequence of quasi-uniform triangulations and construct the operators
$B_i$ and $\tilde{A}_{\GAM}$ for each triangulation. We denote the
size of each operator by $N$, i.e.~$B_i \in \RE^{N \times N}$. We have
$1/h \propto \sqrt{N}$ as $h$ goes to zero.

According to (\ref{eq:CestL}), the minimum eigenvalue of $C_i$ is
bounded from below independently of the mesh size. This can be seen
from Table \ref{tab:eigC}.
\begin{table}
  \caption{Minimum and maximum eigenvalues of $C_i$.}
  \label{tab:eigC}
  \centering
  \begin{tabular}{l|cccccc}
    $\sqrt{N}$ & 6 & 13 & 26 & 55 & 112 & 225
    \\
    \hline
    $\sigma_{\min}$ & 0.295 & 0.288 & 0.286 & 0.286 & 0.286 & 0.286
    \\
    $\sigma_{\max}$ & 0.335 & 0.415 & 0.457 & 0.478 & 0.489 & 0.494
   \end{tabular}
\end{table}
For the maximum eigenvalues of $C_i$, observe that $\sigma_{\max}$ is
less than $\frac{1}{2}$ and is increasing. In order to see the growth
rate we plot $\frac{1}{2} - \sigma_{\max}$ in Figure \ref{fig:eigC}
which decreases like $1/\sqrt{N} = O(h)$ as $N$ goes to infinity. 
\begin{figure}
  \centering
  \epsfig{file=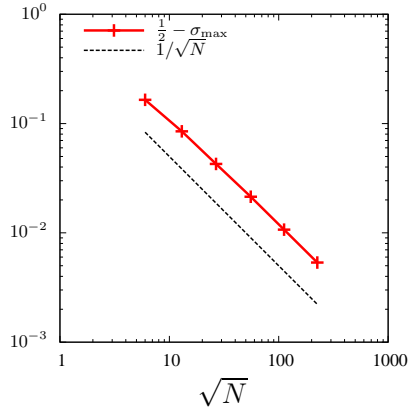, scale=0.75}
  \caption{Behavior of $(\frac{1}{2} - \sigma_{\max})$ versus total
    number of unknowns, $N$.}
  \label{fig:eigC}
\end{figure}
This is in agreement with (\ref{eq:CestU}).

\subsection{Two subdomain case}

In this section we compare the contraction factor of the two Schwarz
algorithms with respect to $h$-dependency. We perform both algorithms on
a sequence of unstructured meshes. We measure the number of iterations
required to reduce the relative error to $tol := 1\mbox{\sc{e}-}10$
while refining the mesh, that is
\begin{equation*}
  {\norm{ u_h^{(n)} - u_h }_{0}} \leq tol \, \norm{f}_{0}.
\end{equation*}
This level of accuracy is not necessary in practice since the error
between the exact and approximate solution, $\norm{u-u_h}_0$, is
much bigger and one usually can terminate the iteration after reaching
the accuracy level of the method. The domain is partitioned into two
by a non-straight interface; see Figure \ref{fig:ddmesh} (left).

As we see in the Figure \ref{fig:h-dep} (left), 
\begin{figure} \label{fig:h-dep}
  \centering
  \epsfig{file=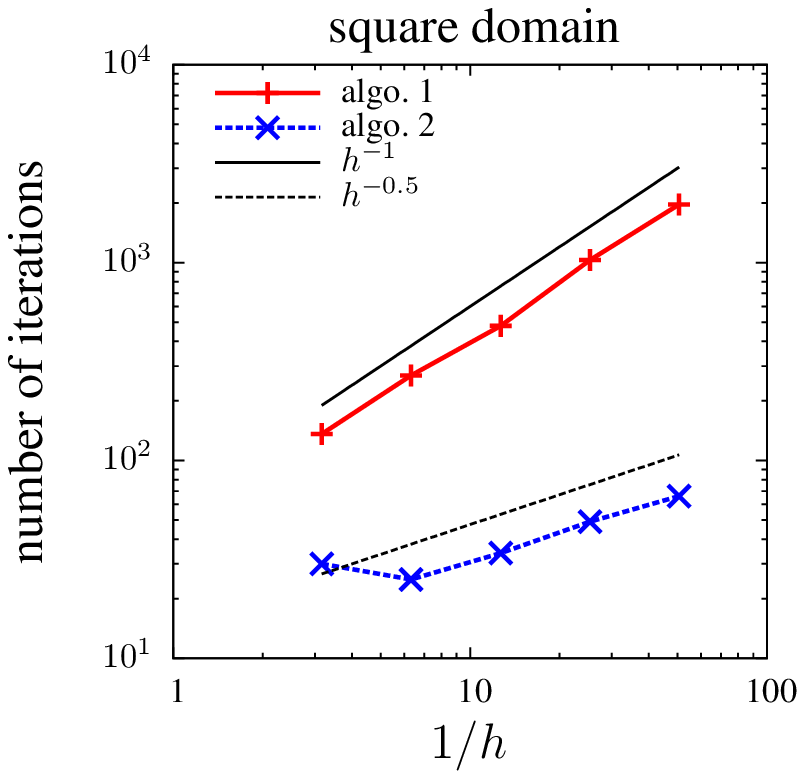, scale=0.75}
  \hspace{0.5cm}
  \epsfig{file=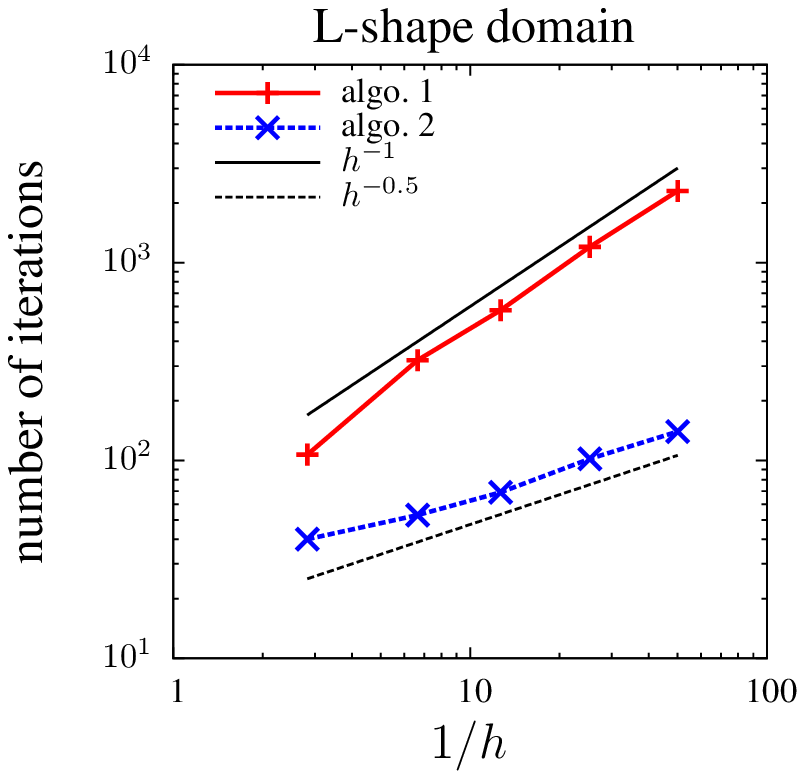, scale=0.75}	
  \caption{Convergence of Schwarz methods on a square domain
    (left) and L-shape domain (right).}
\end{figure}
on a square domain the number of iterations for Algorithm
\ref{algo:addschwarz} grows like $1/h$, which is equivalent to $\rho
\leq 1 - O(h)$, while for Algorithm \ref{algo:OS} it behaves like
$1/\sqrt{h}$, or in other words we have $\rho \leq 1 - O(\sqrt{h})$,
which illustrates well our analysis. This is the case for the L-shape
domain too, see Figure \ref{fig:h-dep} (right).

\subsection{Multi subdomains case}

We now show some numerical results on the multi subdomain
algorithm. The subdomains are formed by a coarse triangulation of the
domain which we call $\Ti{H}$. We consider a nested fine mesh and
therefore $\Ti{H} \subset \Th$. An example is given in Figure
\ref{fig:ddmesh} (right).  We consider here {\it four} subdomains
which share a cross-point, and similarly to the two subdomain case we
measure the number of iterations necessary to reach the desired
tolerance. We observe in Table \ref{tab:conv4sub}
\begin{table}
  \caption{Convergence of OSM for four subdomains} \centering
  \label{tab:conv4sub}
  \begin{tabular}{l|ccccc}
    Mesh size & $h_0$ & $h_0/2$ & $h_0/4$ & $h_0/8$ & $h_0/16$
    \\
    \hline
    \# iterations & 25 & 35 & 57 & 82  & 117
  \end{tabular}
\end{table}
that the contraction factor asymptotically is $\rho = 1 -
O(\sqrt{h})$, i.e.~$82/57\approx1.43$ or $117/82 \approx 1.42$ which
are close to $\sqrt{2}$.

\subsection{OSM as a preconditioner}

We use now the optimized Schwarz method as a preconditioner for GMRES
with the tolerance $tol := 1\mbox{\sc{e}-}6$. In order to provide a
qualitative comparison we also consider the widely used conjugate
gradient method with a one-level additive Schwarz preconditioner
applied to the original system (\ref{eq:linsys}).  We consider 16
subdomains illustrated in Figure \ref{fig:ddmesh} (right). We observe
in Table \ref{table:gmres2}
\begin{table} 
  \caption{Number of iterations required by OSM-GMRES and PCG
    to reach the desired tolerance.}
  \label{table:gmres2}
  \centering
  \begin{tabular}{l|ccccc}
    Mesh size & $h_0$ & $h_0/2$ & $h_0/4$ & $h_0/8$ & $h_0/16$ 
    \\
    \hline
    OSM-GMRES & 20 & 52  & 60 & 72 & 87
    \\
    PCG & 14 & 38 & 55 & 104 & 154 
  \end{tabular}
\end{table}
that the number of iterations for OSM-GMRES grows like
$O(h^{-1/4})$. This is because Krylov methods benefit often from
another square-root in their contraction factor compared to the
stationary iteration method. Therefore the contraction factor of
OSM-GMRES is $\rho = 1 - O(h^{1/4})$, i.e.~$72/60 \approx 1.2, 87/72
\approx 1.2$ which are close to $2^{1/4}$. For preconditioned
(additive Schwarz) conjugate gradient method, we have $\rho = 1 -
O(\sqrt{h})$.

We would like to comment on the size of the augmented system. In case
of mesh size $h_0/16$ we have 19,032 DOFs for the primal variable
$u_h$ and 1,296 DOFs for the interface unknowns. Therefore the
augmented system is very little changed in size compared to the
original system.

\section{Conclusion}

We have presented and analyzed classical and optimized Schwarz methods
for IPH discretizations. The interesting fact is that both use Robin
transmission conditions, but we proved that for an arbitrary
two-subdomain decomposition the classical Schwarz algorithm has a
convergence factor $1 - O(h)$, while the optimized one has a
contraction factor $1-O(\sqrt{h})$. This is because the IPH
discretization imposes a bad choice of the Robin parameter on the
method. We then generalized the definition of the algorithms to the
multi-subdomain case, and showed by numerical experiments that our
theoretical results still hold. We finally illustrated the potential
benefit that one obtains using OSM as a preconditioner compared to PCG.
\Appendix \label{appendix}
\section*{proof of inequalities for $\Zopt(\cdot)$}
In this part we provide some proofs regarding the extension by zero
operator, $\Zopt_i(\cdot)$.  First we recall inverse and mass matrix
inequalities; see \cite[Appendix B]{widlund} and references therein.
All constants are independent of $h$.  Let $w \in \PO^1(K)$ where $K$
is a simplex in $\RE^d$. Then the inverse inequality
\begin{equation} \label{eq:invineq}
  \norm{ \nabla w }_{K} \leq \frac{c}{h} \norm{ w }_{K}
\end{equation}
holds. Let $\BO{w}$ be the DOFs of $w$ and $M_d$ be the corresponding
mass matrix. Then we have
\begin{equation*}
  c_1 \, h^{d} \, \BO{w}^{\top} \BO{w} \leq \BO{w}^{\top} M_d \BO{w} \leq 
  c_2 \, h^{d} \, \BO{w}^{\top} \BO{w}.
\end{equation*}
\begin{lemma}
Let $\varphi \in \Lambda_h$ and $\Zopt_i(\varphi)$ be its extension by
zero operator into $\OM_i$.  For an element $K$ which
shares an edge with the interface, we have
\begin{equation*}
  \begin{array}{lcl}
    \norm{ \nabla \Zopt_i( \varphi ) }_{K}^{2} &\leq& {C_1}{h^{-1}}
    \norm{\varphi}_{e}^{2}, \\ \norm{ \Zopt_i( \varphi ) }_{K}^{2}
    &\leq& C_2 \, h \norm{\varphi}_{e}^{2}.
  \end{array}
\end{equation*}
\end{lemma}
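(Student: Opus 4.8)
The plan is to descend to the degrees of freedom and to use only the mass-matrix and inverse inequalities recalled immediately before the statement, together with quasi-uniformity ($h\approx h_K\approx h_e$). Fix an element $K\in\Ti{i}$ that shares the edge $e\in\GAM$, and set $w:=\Zopt_i(\varphi)|_K\in\PO^1(K)$. By the definition of $\Zopt_i$, the only nonzero nodal values of $w$ are the two vertex values sitting on $e$, and, since the nodal basis of $\PO^1(K)$ restricts on $e$ to the nodal basis of $\PO^1(e)$, these values coincide with the nodal values of $\varphi$ on $e$. Hence, writing $\BO{w}$ for the DOF vector of $w$ on $K$ and $\BO{\varphi}_e$ for the DOF vector of $\varphi$ on $e$, one has $\BO{w}^{\top}\BO{w}=\BO{\varphi}_e^{\top}\BO{\varphi}_e$, because the extra interior DOF of $w$ is zero and contributes nothing to the Euclidean norm.

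First I would bound $\norm{w}_K^2$. Applying the upper mass-matrix inequality in $\RE^2$ gives $\norm{w}_K^{2}=\BO{w}^{\top}M_2\BO{w}\le c_2\,h^{2}\,\BO{w}^{\top}\BO{w}=c_2\,h^{2}\,\BO{\varphi}_e^{\top}\BO{\varphi}_e$. Next, the lower mass-matrix inequality on the one-dimensional simplex $e$ gives $c_1\,h\,\BO{\varphi}_e^{\top}\BO{\varphi}_e\le\BO{\varphi}_e^{\top}M_1\BO{\varphi}_e=\norm{\varphi}_e^{2}$, that is $\BO{\varphi}_e^{\top}\BO{\varphi}_e\le (c_1 h)^{-1}\norm{\varphi}_e^{2}$. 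Combining the two estimates yields $\norm{\Zopt_i(\varphi)}_K^{2}=\norm{w}_K^{2}\le C_2\,h\,\norm{\varphi}_e^{2}$ with $C_2=c_2/c_1$, which is the second claimed bound.

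For the gradient estimate I would simply invoke the inverse inequality (\ref{eq:invineq}): since $w\in\PO^1(K)$, $\norm{\nabla w}_K\le (c/h)\,\norm{w}_K$, so $\norm{\nabla\Zopt_i(\varphi)}_K^{2}\le (c^{2}/h^{2})\,\norm{w}_K^{2}\le c^{2}C_2\,h^{-1}\norm{\varphi}_e^{2}=:C_1\,h^{-1}\norm{\varphi}_e^{2}$. All constants are independent of $h$ by quasi-uniformity, and the lemma follows.

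The argument is entirely routine; the only point deserving a moment's care is the DOF bookkeeping in the first paragraph, namely identifying the nonzero DOFs of $w$ with those of $\varphi|_e$ and checking that the (zero) interior DOF drops out, so that $\BO{w}^{\top}\BO{w}=\BO{\varphi}_e^{\top}\BO{\varphi}_e$ exactly. One could instead run the whole computation by an explicit affine scaling to a reference triangle, but I would prefer the mass-matrix route since those inequalities are already stated in the appendix.
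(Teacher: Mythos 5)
Your proof is correct and follows essentially the same route as the paper's: reduce to the DOF vector $(\varphi_1,\varphi_2,0)$, pass between the element mass matrix (scaling $h^{d}$, here $d=2$) and the edge mass matrix (scaling $h^{d-1}$) via quasi-uniformity, and use the inverse inequality (\ref{eq:invineq}) for the gradient bound. The only difference is cosmetic ordering — you prove the $\Lsp^2$ bound first and deduce the gradient bound from it, whereas the paper writes the gradient chain first and notes the $\Lsp^2$ case follows the same steps.
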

\begin{proof}
  Let $\phiB_e := (\varphi_1, \varphi_2)$ be the DOFs of $\varphi$ on
  the edge shared with the interface.  Moreover let $w =
  \Zopt_i(\varphi)|_{K}$. Then we have $\BO{w} = (\varphi_1,
  \varphi_2, 0)$.  For the first inequality we invoke the inverse
  inequality.  Assuming the mesh is quasi-uniform, i.e.~$h_e \approx
  h_K \approx h$, we get
  \begin{equation*}	
    \begin{array}{rcl}
      \norm{ \nabla w }_{K}^{2} \leq \frac{c^2}{h^2} \norm{ w }^{2}_{K}
      &\leq& c_1 h^{d-2} ( \varphi_1^2 + \varphi_2^2 + 0)
      \\
      &\leq& c_2 h^{d-2} h_e^{-(d-1)} \phiB_e^\top M_{d-1} \phiB_e
      \\
      &\leq& c_3 h^{-1} \phiB_e^\top M_{d-1} \phiB_e
      \\
      &=& c_3 h^{-1} \norm{ \varphi }_{e}^{2}.
    \end{array}	
  \end{equation*}
  The proof for the second inequality follows the same steps. \quad
\end{proof}
\begin{lemma}
Let $\varphi \in \Lambda_h$ and $\Zopt_i(\varphi)$ be its extension by
zero operator into $\OM_i$.  Then
\begin{equation*}
  \norm{ \jump{ \Zopt_i( \varphi ) } }_{\EPS_i}^{2} \leq
  C \norm{ \varphi }_{\GAM}^{2},
\end{equation*}
where $C \geq 1$.
\end{lemma}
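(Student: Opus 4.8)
The plan is to use that $\Zopt_i(\varphi)$ is supported on the single layer of elements of $\Ti{i}$ having an edge on $\GAM$, which I denote by $\{K_\GAM\}$. Consequently $\jump{\Zopt_i(\varphi)}$ can be nonzero only on an edge $e\in\EPS_i$ that is an edge of some element of $\{K_\GAM\}$; on every other edge of $\EPS_i$ the jump vanishes identically. First I would split these relevant edges into the set $\GAM\subset\EPS_i$ of interface edges and the set $\EPS_i^{\sharp}$ of the remaining edges of the elements of $\{K_\GAM\}$ (the interior edges of $\Ti{i}$ adjacent to the layer, together with any edges of such elements lying on $\partial\OM$), and estimate the two contributions separately.

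On the interface, each $e\in\GAM$ is a boundary edge of the subdomain triangulation, so the jump is just the trace times the unit normal; since $\Zopt_i(\varphi)|_\GAM=\varphi$ we get
\[
  \sum_{e\in\GAM}\norm{\jump{\Zopt_i(\varphi)}}_e^2
  \;=\;\sum_{e\in\GAM}\norm{\varphi}_e^2\;=\;\norm{\varphi}_\GAM^2 .
\]
This contribution is exactly $\norm{\varphi}_\GAM^2$, which is precisely why the constant in the statement cannot be smaller than $1$.

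For $\EPS_i^{\sharp}$, an edge $e$ there is an edge of one or two elements of $\{K_\GAM\}$ (and possibly of a further element on which $\Zopt_i(\varphi)\equiv0$). Applying the triangle inequality to $\jump{\Zopt_i(\varphi)}|_e$ and using $\norm{q}_e\le\norm{q}_{\partial K}$, it suffices to bound $\norm{\Zopt_i(\varphi)}_{\partial K}^2$ for $K\in\{K_\GAM\}$. The trace inequality (\ref{eq:warburton}) (with $k=1$) together with the bound $\norm{\Zopt_i(\varphi)}_K^2\le C_2\,h\,\norm{\varphi}_{e_K}^2$ from the preceding lemma, where $e_K=\partial K\cap\GAM$, yields
\[
  \norm{\Zopt_i(\varphi)}_{\partial K}^2\;\le\;\frac{c}{h}\,\norm{\Zopt_i(\varphi)}_K^2\;\le\;\frac{c}{h}\,C_2\,h\,\norm{\varphi}_{e_K}^2\;=\;c\,C_2\,\norm{\varphi}_{e_K}^2 .
\]
Summing over $e\in\EPS_i^{\sharp}$ and regrouping by the at most $d+1$ edges of each $K\in\{K_\GAM\}$ gives $\sum_{e\in\EPS_i^{\sharp}}\norm{\jump{\Zopt_i(\varphi)}}_e^2\le c'\sum_{K\in\{K_\GAM\}}\norm{\varphi}_{e_K}^2=c'\norm{\varphi}_\GAM^2$, where the last equality uses that every edge of $\GAM$ belongs to exactly one element of $\Ti{i}$. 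Adding the two contributions proves the lemma with $C=1+c'\ge1$.

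The only delicate point is the combinatorial bookkeeping in the last step: one must ensure that, when the edge sum over $\EPS_i^{\sharp}$ is reorganized as an element sum over $\{K_\GAM\}$, each term $\norm{\varphi}_{e_K}^2$ is counted a number of times bounded independently of $h$, and that edges shared by two layer elements are treated by applying the trace estimate on each side separately. Shape-regularity and quasi-uniformity of $\Th$ are exactly what keep this overlap uniformly bounded; no further regularity of $\varphi$ or of the mesh is required.
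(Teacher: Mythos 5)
Your proof is correct, and its skeleton coincides with the paper's: both split the edge sum into the interface edges, where $\jump{\Zopt_i(\varphi)}$ reduces to $\varphi$ and contributes exactly $\norm{\varphi}_\GAM^2$ (which is why $C\geq 1$), and the remaining edges of the single layer of elements touching $\GAM$, where the jump is the only other place it can be nonzero. The difference lies in how that second group is estimated. The paper works directly on nodal values: on such an edge $e^\ast$ the jump has degree-of-freedom vector of the form $(\varphi_2,0)$, and the edge mass-matrix equivalence plus quasi-uniformity give $\norm{\jump{\Zopt_i(\varphi)}}_{e^\ast}^2\leq c_1\norm{\varphi}_e^2$ for the neighbouring interface edge $e$. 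You instead chain the discrete trace inequality (\ref{eq:warburton}) with the already-proven element bound $\norm{\Zopt_i(\varphi)}_K^2\leq C\,h\,\norm{\varphi}_{e_K}^2$, and then regroup the edge sum into an element sum, invoking shape regularity and quasi-uniformity for the bounded overlap. Both are local, finite-dimensional scaling arguments and both yield $C=1+c'$ with $c'$ independent of $h$; your route has the advantage of reusing the preceding lemma and avoiding explicit DOF bookkeeping, at the price of the combinatorial counting step and a less explicit constant, whereas the paper's direct computation exposes exactly which nodal value propagates onto the off-interface edges. (Both arguments, yours and the paper's, tacitly treat layer elements with a single interface edge; the degenerate case of an element with two edges on $\GAM$ is handled the same way with trivial modifications.)
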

\begin{proof}
We start by those edges which are part of the interface, see Figure
\ref{fig:extzero}, e.g.~$e_1$ and $e_3$.  We have
$$
 \sum_{e \in \GAM} \norm{ \jump{ \Zopt_i( \varphi ) } }_{e}^2 = 
	\sum_{e \in \GAM} \norm{ \Zopt_i( \varphi ) }_{e}^{2}
	= \sum_{e \in \GAM} \norm{  \varphi }_{e}^2 = \norm{  \varphi }_{\GAM}^2,
$$ 
which shows already that $C \geq 1$. Consider those edges $e \in
\EPS_i$ that are not on the interface but belong to an element which
shares an edge with the interface, e.g.~$e^\ast := \partial K_1 \cap
\partial K_2$ in Figure \ref{fig:extzero}.  Let $\phiB_e :=
(\varphi_1, \varphi_2)$ be the DOFs of $\varphi$ on $e_2$ and assume
$\varphi_2$ is the DOF which is also located on $e^\ast$. Then we have
\begin{equation*}
  \norm{ \jump{ \Zopt_i( \varphi ) } }^{2}_{e^\ast} 
  = (\varphi_2,0) M_{d-1} (\varphi_2,0)^{\top}
  \leq c h_{e^{\ast}}^{d-1} \varphi_2^{2}
  \leq c h_{e^{\ast}}^{d-1} ( \varphi_2^{2} + \varphi_1^{2} )
  \leq c_1 \norm{ \varphi }_{e}^{2},
\end{equation*}
where we again used the quasi-uniformity of the mesh ($h_e \approx h
\approx h_{e^\ast}$). The other case would be $K_1$ and $K_3$ share an
edge, for which we can use the same argument.  For other edges $ \jump{
  \Zopt_i(\varphi) } $ is simply zero. \quad
\end{proof}
\bibliographystyle{siam}
\bibliography{paper}
\end{document}